\newtheorem{theorem}{Theorem}[section]
\newtheorem{lemma}[theorem]{Lemma}
\newtheorem{remark}{Remark}[section]
\newtheorem{example}[theorem]{Example}
\DeclareMathOperator*{\argmin}{arg\,min}
\newcommand{\p}{\mathbb{P}}
\newcommand{\eps}{\varepsilon}
\newcommand{\E}{\mathbb{E}}
\newcommand{\pp}{{\mathcal{P}}}
\newcommand{\R}{\mathbb{R}}
\newcommand{\I}{\mathcal{I}}
\newcommand{\U}{\mathcal{U}}
\newcommand{\F}{\mathcal{F}}
\newcommand{\N}{\mathcal{N}}
\newcommand{\K}{\mathcal{K}}
\newcommand{\Z}{\mathbb{Z}}
\newcommand{\W}{\mathbb{W}}
\newcommand{\h}{\mathbb{H}}
\newcommand{\B}{\mathbb{B}}
\newcommand{\M}{\mathcal{M}}
\newcommand{\X}{\mathcal{X}}
\newcommand{\C}{\mathcal{C}}
\newcommand\norm[1]{\lVert#1\rVert}
\def\qt#1{\qquad\text{#1}}
\begin{document}

\begin{frontmatter}
\title{Nonparametric Shape-restricted Regression}
\runtitle{Shape-restricted Regression}

\begin{aug}
\author{\fnms{Adityanand} \snm{Guntuboyina}\thanksref{t1}\ead[label=e1]{aditya@stat.berkeley.edu}}
\and
\author{\fnms{Bodhisattva} \snm{Sen}\thanksref{t2}\ead[label=e2]{bodhi@stat.columbia.edu}}


\thankstext{t1}{Supported by NSF CAREER Grant DMS-16-54589.}
\thankstext{t2}{Supported by NSF Grants DMS-17-12822 and AST-16-14743.}
\runauthor{A.~Guntuboyina and B.~Sen}

\affiliation{University of California at Berkeley and Columbia University}

\address{Department of Statistics \\ University of California at Berkeley \\ 423 Evans Hall \\ Berkeley, CA 94720 \\ \printead{e1}}

\address{Department of Statistics \\ Columbia University \\ 1255 Amsterdam Avenue \\ New York, NY 10027 \\ \printead{e2}}
\end{aug}

\begin{abstract}
We consider the problem of nonparametric regression under shape constraints. The main examples include isotonic regression (with respect to any partial order), unimodal/convex regression, additive shape-restricted regression, and constrained single index model. We review some of the theoretical properties of the least squares estimator (LSE) in these problems, emphasizing on the  adaptive nature of the LSE. In particular, we study the behavior of the risk of the LSE, and its pointwise limiting distribution theory, with special emphasis to isotonic regression. We survey various methods for constructing pointwise confidence intervals around these shape-restricted functions. We also briefly discuss the computation of the LSE and indicate some open research problems and future directions. 

\end{abstract}

\begin{keyword}
\kwd{adaptive risk bounds}
\kwd{bootstrap}
\kwd{Chernoff's distribution}
\kwd{convex regression}
\kwd{isotonic regression}
\kwd{likelihood ratio test}
\kwd{monotone function}
\kwd{order preserving function estimation}
\kwd{projection on a closed convex set}
\kwd{tangent cone}
\end{keyword}

\end{frontmatter}
\section{Introduction}\label{sec:Intro}
In nonparametric shape-restricted regression the observations $\{(x_i,y_i): i=1, \ldots, n\}$ satisfy
\begin{equation}\label{eq:RegMdl}
	y_i = f(x_i) + \varepsilon_i, \qquad \mbox{for } i = 1,\ldots, n,
\end{equation}
where $x_1, \ldots,  x_n$ are design points in some space (e.g., $\R^d$, $d \ge 1$), $\varepsilon_1,\ldots, \varepsilon_n$ are unobserved mean-zero errors (with finite variances), and the real-valued regression function $f$ is unknown but obeys certain known qualitative restrictions like monotonicity, convexity, etc. Let $\F$ denote the class of all such regression functions. Letting $\theta^* := (f(x_1),\ldots, f(x_n))$, $Y := (y_1,\ldots, y_n)$ and $\eps := (\varepsilon_1,\ldots, \varepsilon_n)$, model~\eqref{eq:RegMdl} may be rewritten as 
\begin{equation}\label{eq:SeqMdl}
Y = \theta^* + \eps,
\end{equation}
and the problem is to estimate $\theta^*$ and/or $f$ from $Y$, subject to the constraints imposed by the properties of $\F$. 
The constraints on the function class $\F$ translate to constraints on $\theta^*$ of the form $\theta^* \in \C$, where 
\begin{equation}\label{eq:C}
\C:= \big\{(f(x_1), \dots, f(x_n)) \in \R^n: f \in \F\big\}
\end{equation}
is a subset of $\R^n$ (in fact, in most cases, $\C$ will be a closed convex cone). In the following we give some examples of shape-restricted regression. 

\begin{example}[Isotonic regression]\label{ex:IsoMdl} Probably the most studied shape-restricted regression problem is that of estimating a monotone (nondecreasing) regression function $f$ when $x_1 <\ldots < x_n$ are the univariate design points. In this case, $\F$ is the class of all nondecreasing functions on the interval  $[x_1, x_n]$, and the constraint set $\C$ reduces to
\begin{equation}\label{eq:IsoCone}
\I := \{(\theta_1, \ldots, \theta_n) \in \R^n :\theta_1 \le \ldots \le \theta_n\},
\end{equation} 
which is a closed convex cone in $\R^n$ ($\I$ is defined through $n - 1$ linear constraints). The above problem is typically known as isotonic regression and has a long history in statistics; see e.g.,~\cite{Brunk55, AyerEtAl55, vanEeden56}.
\end{example}

\begin{example}[Order preserving regression on a partially ordered set]\label{ex:RegPartialOrder} Isotonic regression can be easily extended to the setup where the covariates take values in a space $\X$ with a partial order $\precsim$\footnote{A partial order is a binary relation $\precsim$ that is reflexive ($x \precsim x$ for all $x \in \X$), transitive ($u, v, w \in \X, \; u \precsim v$ and $v \precsim w$ imply $u \precsim w$), and antisymmetric ($u , v  \in \X, \; u \precsim v$ and $v \precsim u$ imply $u  = v$).}; see e.g.,~\cite[Chapter 1]{RWD88}. 
A function $f:\X \to \R$ is said to be {\it isotonic} (or order preserving) with respect to the partial order $\precsim$ if for every pair $u,v  \in \X$,  $$ u \precsim v \;\;\Rightarrow \;\; f(u) \le f(v).$$ For example, suppose that the predictors take values in $\R^2$ and the partial order $\precsim$ is defined as $(u_1, u_2) \precsim (v_1,v_2)$ if and only if $ u_1 \le v_1$ and $u_2 \le v_2$. This partial order leads to a natural extension of isotonic regression to two dimensions; see e.g.,~\cite{HPW73, RW75, CGS17}. One can also consider other partial orders; see e.g.,~\cite{Stout14, Stout15} and the references therein for isotonic regression with different partial orders. We will introduce and study yet another partial order  in Section~\ref{sec:Open}. 

Given data from model~\eqref{eq:RegMdl}, the goal is to estimate the unknown regression function $f:\X \to \R$ under the assumption that $f$ is order preserving (with respect to the partial order $\precsim$). The restrictions imposed by the partial order $\precsim$ constrain $\theta^*$ to lie in a closed convex cone $\C$ which may be expressed as
\begin{equation*}\label{eq:PO-Cons}
\{(\theta_1,\ldots, \theta_n)  \in \R^n: \theta_i \le \theta_j \mbox{ for every } i,j \mbox{ such that } x_i \precsim x_j \}.
\end{equation*} 

\end{example}

\begin{example}[Convex regression]\label{ex:ConvexReg} Suppose that the underlying regression function $f: \R^d \to \R$ ($d \ge 1$) is known to be convex, i.e., for every $u,v \in \R^d$, \begin{equation}\label{eq:CvxDef}
f(\alpha u + (1-\alpha) v) \le \alpha f(u) + (1-\alpha) f(v), \quad \mbox{ for every } \alpha \in (0,1).
\end{equation}
Convexity appears naturally in many applications; see e.g.,~\cite{Hildreth54, K08, DT17} and the references therein. The convexity of $f$ constrains $\theta^*$ to lie in a (polyhedral) convex set $\C \subset \R^n$ which, when $d=1$ and the $x_i$'s are ordered, reduces to
\begin{equation}\label{eq:CvxReg}
\K := \left\{(\theta_1,\ldots, \theta_n) \in \R^n: \frac{\theta_2 - \theta_1}{x_2 - x_1} \le \ldots \le \frac{\theta_n - \theta_{n-1}}{x_n - x_{n-1}} \right\},
\end{equation}
whereas for $d \ge 2$ the characterization of $\C$ is more complex; see e.g.,~\cite{SS11}.  

Observe that when $d=1$, convexity is characterized by nondecreasing derivatives (subgradients). This observation can be used to generalize convexity to $k$-monotonicity ($k \ge 1$): a real-valued function $f$ is said to be $k$-monotone if its $(k-1)$'th derivative is monotone; see e.g.,~\cite{M91, CGS15}.  For equi-spaced design points in $\R$, this restriction constrains $\theta^*$ to lie in the set $$\{\theta \in \R^n: \nabla^k \theta \geq 0\}$$ where $\nabla:\R^n 
  \rightarrow \R^n$ is given by $\nabla(\theta) := (\theta_2 -
  \theta_1, \theta_3 - \theta_2, \dots, \theta_n - \theta_{n-1},
  0)$ and $\nabla^k$ represents the $k$-times composition of $\nabla$. Note that the case $k=1$ and $k=2$ correspond to isotonic and convex regression, respectively. 

\end{example}

\begin{example}[Unimodal regression]\label{ex:UniReg}
In many applications $f$, the underlying regression function, is known to be unimodal; see e.g.,~\cite{Frisen86, Chat16} and the references therein. Let $\I_m$, $1 \le m \le n$, denote the convex set of all unimodal vectors (first decreasing and then increasing) with mode at position $m$, i.e., $$\I_m := \{(\theta_1,\ldots, \theta_n) \in \R^n: \theta_1 \ge \ldots \ge \theta_m \le \theta_{m+1} \le \ldots \le \theta_n\}.$$ Then, the unimodality of $f$ constrains $\theta^*$ to belong to $\U := \cup_{m=1}^n \I_m$. Observe that now $\U$ is not a convex set, but a union of $n$ convex cones.
\end{example}

\begin{example}[Shape-restricted additive model]\label{ex:AddMdl}
In an additive regression model one assumes that $f:\R^d \to \R$ ($d \ge 1$) depends on each of the predictor variables in an additive fashion, i.e., for $(u_1,\ldots, u_d) \in \R^d$, $$f(u_1,\ldots, u_d) = \sum_{i=1}^d f_i(u_i),$$ where $f_i$'s are one-dimensional functions and $f_i$ captures the influence of the $i$'th variable. Observe that the additive model generalizes (multiple) linear regression. If we assume that each of the  $f_i$'s are shape-constrained, then one obtains a shape-restricted additive model; see e.g.,~\cite{bacchetti89, Mammen07, Meyer13, CS16} for a study of some possible applications, identifiability and estimation in such a model. 

\end{example}
\begin{example}[Shape-restricted single index model]\label{ex:SIM} 
In a single index regression model one assumes that the regression function $f:\R^d \to \R$ takes the form
\begin{equation*}\label{eq:SIM}
f(x) = m(x^\top \beta^*), \quad \mbox{ for all } \, x \in \R^d,
\end{equation*} 
where $m:\R \to \R$  and $\beta^* \in \R^d$ are unknown. Single index models are popular in many application areas, including econometrics and biostatistics (see e.g.,~\cite{Powelletal89, liracine07}), as they circumvent the curse of dimensionality encountered in estimating the fully nonparametric regression function by assuming that the link function depends on $x$ only through a one-dimensional projection, i.e., $x^\top\beta^*$. Moreover, the coefficient vector $\beta^*$ provides interpretability. Observe that single index models extend generalized linear models (where the link function $m$ is assumed known). Moreover, as most known link functions are nondecreasing, the monotone single index model (where $m$ is assumed unknown but nondecreasing) arises naturally in applications; see e.g.,~\cite{VANC, groeneboom2016current, 2016arXiv161006026B}. 
\end{example}

Observe that all the aforementioned problems fall under the general area of nonparametric regression. However, it turns out that in each of the above problems one can use classical techniques like least squares and/or maximum likelihood (without additional explicit regularization/penalization) to readily obtain {\it tuning parameter-free estimators} that have attractive theoretical and computational properties. This makes shape-restricted regression different from usual nonparametric regression, where likelihood based methods are generally infeasible. In this paper we try to showcase some of these attractive features of shape-restricted regression and give an overview of the major theoretical advances in this area.



Let us now introduce the estimator of $\theta^*$ (and $f$) that we will study in this paper. 
The {\it least squares estimator} (LSE) $\hat \theta$ of $\theta^*$ in shape-restricted regression is defined as the projection of $Y$ onto the set $\C$ (see~\eqref{eq:C}), i.e., \begin{equation}\label{eq:LSE}
\hat \theta := \arg \min_{\theta \in \C} \|Y - \theta\|^2,
\end{equation}
where $\|\cdot\|$ denotes the usual Euclidean norm in $\R^n$. If $\C$ is a closed convex set  then $\hat \theta \in \C$ is unique and is characterized by the following condition:
\begin{equation}\label{eq:Charac}
\langle Y - \hat \theta, \theta - \hat \theta\rangle \le 0, \quad \mbox{for all } \theta \in \C,
\end{equation}
where $\langle \cdot, \cdot \rangle$ denotes the usual inner product in $\R^n$; see~\cite[Proposition 2.2.1]{B03}. It is easy to see now that the LSE $\hat \theta$ is tuning parameter-free, unlike most nonparametric estimators. However it is not generally easy to find a closed-form expression for $\hat \theta$. As for estimating $f$, any $\hat f_n \in \F$ that agrees with $\hat \theta$ at the data points $x_i$'s will be considered as a LSE of $f$.

In this paper we mainly review the main theoretical properties of the LSE $\hat \theta$ with special emphasis on its {\it adaptive} nature. The risk behavior of $\hat \theta$ (in estimating $\theta^*$) is studied in Sections~\ref{sec:IsoRisk} and~\ref{sec:Risk} --- Section~\ref{sec:IsoRisk} mainly deals with the isotonic LSE in detail whereas Section~\ref{sec:Risk} summarizes the main results for other shape-restricted problems. In Section~\ref{sec:Inference} we study the pointwise asymptotic behavior of the LSE $\hat f_n$, in the case of isotonic and convex regression, focusing on methods for constructing (pointwise) confidence intervals around $f$. In the process of this review we highlight the main ideas and techniques used in the proofs of the theoretical results; in fact, we give (nearly) complete proofs in some cases.

The computation of the LSE $\hat \theta$, in the various problems outlined above, is discussed in Section~\ref{sec:Compute}. In Section~\ref{sec:Open} we mention a few open research problems and possible future directions. 
Although the paper mostly summarizes known results, we also present some new results --- Theorems~\ref{thm:AdapIsoReg},~\ref{thm:L_p},~\ref{pro}, and~\ref{thm:PO}, and Lemma~\ref{lem:AM-Oracle} are new. Appendix~\ref{sec:Proofs} contains some of the detailed proofs of results in this paper.  

There are indeed many other important applications and examples of shape-restricted regression beyond those highlighted so far. We briefly mention some of these below. Shape constrained functions also arise naturally in interval censoring problems (e.g., in the current status model; see~\cite{GW92, HW97}), in survival analysis (e.g., in estimation of monotone/unimodal hazard rates~\cite{HW95}), and in regression models where the response, conditional on the covariate, comes from a regular parametric family (e.g., monotone response models~\cite{Ban07}). It also arises in the study of many inverse problems; e.g., deconvolution problems (see e.g.,~\cite{GW92, Jv09}) and the classical Wicksell's corpuscle problem (see e.g.,~\cite{GJ95, SW12}). There are many applications that involve testing with shape constraints; see e.g.,~\cite{DS01, SM17, Wei17} and the references therein. 

In this paper we will mostly focus on estimation  of the underlying shape-restricted function using the method of least squares. Although this produces tuning parameter-free estimators, the obtained LSEs are not ``smooth". There is also a line of research that combines shape constraints with smoothness assumptions --- see e.g.,~\cite{Muk88, Mammen91, GJW10} (and the references therein) where kernel-based methods have been combined with shape-restrictions, and see~\cite{MT99, Meyer08, VANC, KPS17} where splines are used in conjunction with the shape constraints.

\subsection{Some applications of shape-restricted regression}
Shape-constrained regression has a long history in statistics:~Hildreth~\cite{Hildreth54} considered least squares estimation (i.e., maximum likelihood estimation under Gaussian errors) of production functions under the natural assumption of nonincreasing returns (which implies that the production function is concave and nondecreasing). Around the same time, Brunk~\cite{Brunk55} considered maximum likelihood estimation of a regression function under monotonicity constraints. Since then isotonic regression (under any partial order) has seen many applications in diverse settings: in biology~\cite{Obozinski08}, in dose-response models~\cite{hu2005analysis}, in psychology~\cite{kruskal1964multidimensional}, in genetics~\cite{Luss12}, etc.  

Similarly, convexity or concavity constraints arise natural in many disciplines. Economic theory dictates that utility functions are increasing and concave~\cite{Matzkin91} whereas production functions are often assumed to be concave~\cite{Varian84}. In finance, theory restricts call option prices to be convex and decreasing functions of the strike price~\cite{A-Y03}; in stochastic control, value functions are often assumed to be convex (see~\cite{Keshavarz11}~\cite[Chapter 2]{Balazs16} and~\cite{Shapiro09}); see~\cite{MB09} for some applications of convex regression in optimization (in particular, in linear programming). 

Unimodal regression also arises in many settings; see~\cite{Frisen86} and the references therein. Shape-restricted additive and single-index regression models offer flexible, yet interpretable, statistical procedures for handling multidimensional covariates, and have been extensively used in econometrics, epidemiology and other fields (see~\cite{CS16, Pya15, KPS17} and the references therein).


\section{Risk bounds in Isotonic Regression}\label{sec:IsoRisk}
In this section we attempt to answer the following question: ``How good is $\hat \theta$ as an estimator of   $\theta^*$?". To quantify the accuracy of $\hat \theta$ we first need to fix a loss function. Arguably the most natural loss function here is the squared error loss: $\norm{\hat{\theta} - \theta^*}^2/n$. As the loss function is random, we follow the usual approach and study its expectation: 
\begin{equation}\label{eq:Risk}
R(\hat \theta,\theta^*) := \frac{1}{n} \E_{\theta^*} \big[ \|\hat{\theta} - \theta^*\|^2 \big] = \frac{1}{n} \E_{\theta^*} \sum_{i=1}^n \big(\hat{\theta}_i - \theta^*_i \big)^2
\end{equation} 
which we shall refer to as the {\it risk} of the LSE $\hat{\theta}$. We focus on the risk in this paper. It may be noted that upper bounds derived for  the risk usually hold on the loss $\|\hat{\theta} - \theta^*\|^2/n$ as well, with high probability. When $\eps \sim N_n(0, \sigma^2 I_n)$, this is essentially because $\|\hat \theta - \theta^*\|$ concentrates around its mean; see \cite{vanconcentration} and \cite{Bellec15}  for more details on high  probability results.


One can also try to study the risk under more general $\ell_p$-loss functions. For $p \geq 1$, let  
\begin{equation}\label{rlp}
  R^{(p)}(\hat{\theta}, \theta^*) := \frac{1}{n} \E_{\theta^*}
  \big[ \|\hat{\theta} - \theta^*\|_p^p\big] = \frac{1}{n} \E_{\theta^*} \sum_{i=1}^n \big|\hat{\theta}_i - \theta^*_i  \big|^p
\end{equation}
where $\norm{u}_p := \left(\sum_{j=1}^n |u_j|^p \right)^{1/p}$, for $u =(u_1,\ldots, u_n) \in \R^n$. We shall mostly focus on the risk for $p = 2$ in this paper but we shall also discuss some results for $p \neq 2$. 

In this section, we focus on the problem of isotonic regression (Example~\ref{ex:IsoMdl}) and describe bounds on the risk of the isotonic LSE. As mentioned in the Introduction, isotonic regression is the most studied problem in shape-restricted regression where the risk behavior of the LSE is well-understood. We shall present the main results here. The results described in this section will serve as benchmarks to which risk bounds for other shape-restricted regression problems (see Section \ref{sec:Risk}) can be compared.   

Throughout this section, $\hat{\theta}$ will denote the isotonic LSE (which is the minimizer of $\|Y - \theta\|^2$ subject to the constraint that $\theta$ lies in the closed convex cone $\I$ described in~\eqref{eq:IsoCone}) and $\theta^*$ will usually denote an arbitrary vector in $\I$ (in some situations we deal with misspecified risks where $\theta^*$ is an arbitrary vector in $\R^n$ not necessarily in $\I$). 

The risk, $R(\hat{\theta}, \theta^*)$, essentially has two different kinds of behavior. As long as $\theta^* \in \I$ and $V(\theta^*) := \theta_n^* - \theta^*_1$ (referred to as the \textit{variation} of $\theta^*$) is bounded from above independently of $n$, the risk $R(\hat{\theta}, \theta^*)$ is bounded from above by a constant multiple of $n^{-2/3}$. We shall refer to this $n^{-2/3}$ bound as the \textit{worst case} risk bound mainly because it is, in some sense, the maximum possible rate at which $R(\hat{\theta}, \theta^*)$ converges to zero. On the other hand, if $\theta^* \in \I$ is \textit{piecewise constant} with not too many constant pieces, then the risk $R(\hat{\theta}, \theta^*)$ is bounded from above by the parametric rate $1/n$ up to a logarithmic multiplicative factor. This rate is obviously much faster compared to the worst case rate of $n^{-2/3}$ which means that the isotonic LSE is estimating piecewise constant nondecreasing sequences at a much faster rate. In other words, the isotonic LSE is \textit{adapting} to piecewise constant nondecreasing sequences with not too many constant pieces. We shall therefore refer to this $\log n/n$ risk bound as the \textit{adaptive} risk bound. 

The worst case risk bounds for the isotonic LSE will be explored in Section~\ref{sec:WC} while the adaptive risk bounds are treated in Section~\ref{sec:AdapRB}. Proofs will be provided in the Appendix~\ref{sec:Proofs}. Before proceeding to risk bounds, let us first describe some basic properties of the isotonic LSE.  

An important fact about the isotonic LSE is that $\hat \theta = (\hat{\theta}_1, \dots, \hat{\theta}_n)$ can be  explicitly represented as (see~\cite[Chapter 1]{RWD88}):  
\begin{equation}\label{eq:IsoCharac}
 \hat{\theta}_j  = \min_{v \geq j} \max_{u \leq j} \frac{\sum_{l=u}^v y_l}{v-u+1}, \quad \mbox{for } j = 1,\ldots, n.
\end{equation} 
This is often referred to as the min-max formula for isotonic regression. The isotonic LSE is, in some sense, unique among shape-restricted regression LSEs because it has the above explicit characterization. It is this characterization that allows for a precise study of the properties of $\hat{\theta}$. 

The above characterization of the  isotonic LSE shows that $\hat \theta$ is {\it piecewise constant}, and in each ``block'' (i.e., region of constancy) it is the average of the response values (within the block); see~\cite[Chapter 1]{RWD88}. However, the blocks, their lengths and their positions, are chosen adaptively by the algorithm, the least squares procedure. If $\theta^*_i = f(x_i)$ for some design points $0 \leq x_1 < \dots < x_n \le 1$, then we can define the isotonic LSE of $f$ as the piecewise constant function $\hat f_n:[0,1] \to \R$ which has jumps only at the design points and such that $\hat f_n(x_i) = \hat \theta_i$ for each $i = 1,\ldots, n$. Figure~\ref{fig:IsoReg} shows three different scatter plots, for three different regression functions $f$, with the fitted isotonic LSEs $\hat f_n$. Observe that for the leftmost plot the block-sizes (of the isotonic LSE) vary considerably with the change in slope of the underlying function $f$ --- the isotonic LSE, $\hat{f}_n$, is nearly constant in the interval $[0.3, 0.7]$ where $f$ is relatively flat whereas $\hat{f}_n$ has many small blocks towards the boundary of the covariate domain where $f$ has large slope. This highlights the adaptive nature of the isotonic LSE $\hat f_n$ and also provides some intuition as to why the isotonic LSE adapts to piecewise constant nondecreasing functions with not too many constant pieces. Moreover, in some sense, $\hat f_n$ can be thought of as a kernel estimator (with the box kernel) or a `regressogram' (\cite{Tukey61}), but with a varying bandwidth/window.

\begin{figure}
\includegraphics[height = 2.0in, width = 5.2in]{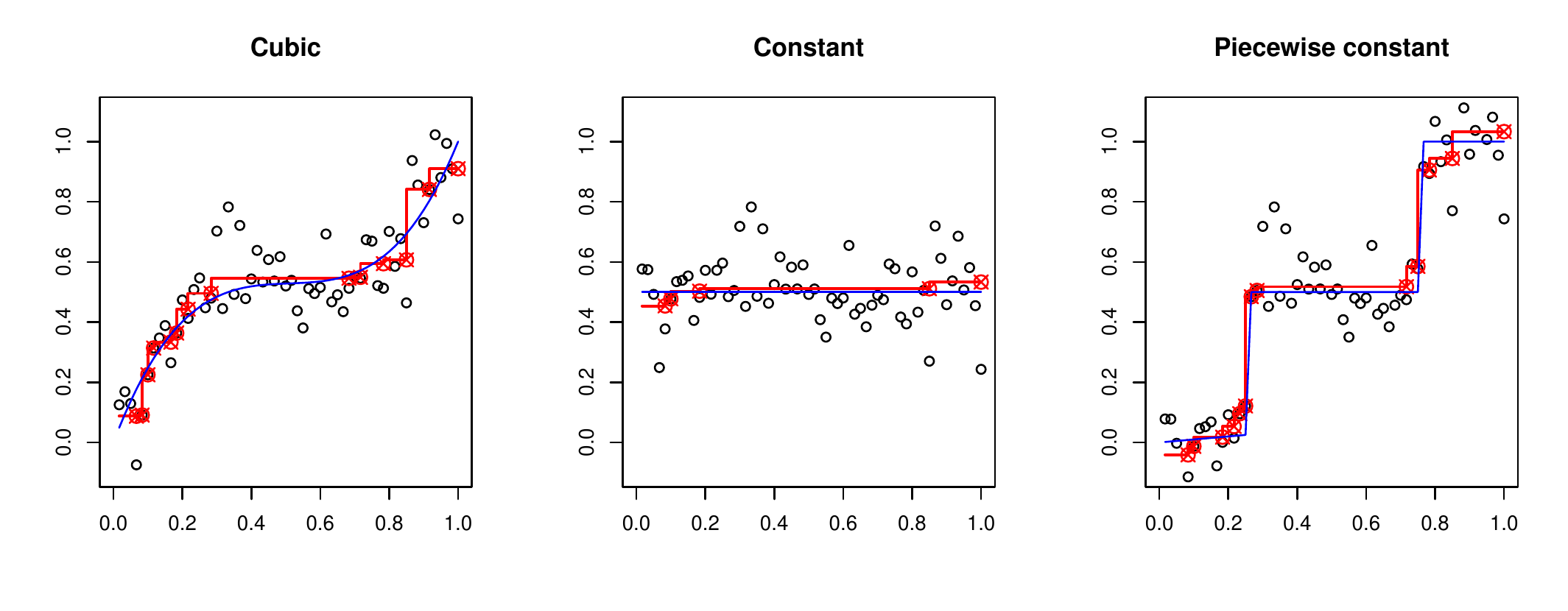}
\caption{Plots of $Y$ (circles), $\hat \theta$ (red), and $\theta^*$ (blue) for three different choices of $f$: (i) cubic polynomial (left plot), (ii) constant (middle plot), and (iii) piecewise constant. Here $n = 60$, and $\eps \sim N_n(0,\sigma^2 I_n)$ with $\sigma = 0.1$. Here $I_n$ denotes the identity matrix of order $n$.} 
\label{fig:IsoReg}
\end{figure}

\subsection{Worst case risk bound}\label{sec:WC}
The worst case risk bound for the isotonic LSE is given by the following inequality. Under the assumption  that the errors $\eps_1, \dots, \eps_n$ are i.i.d.~with mean zero and variance $\sigma^2$, the risk of the 
isotonic LSE satisfies the bound (see~\cite{Zhang02}):  
\begin{equation}\label{eq:IsoWC}
  R(\hat{\theta}, \theta^*) \le C \left(\frac{\sigma^2 V(\theta^*)}{n}
  \right)^{2/3} + C \frac{\sigma^2 \log (en)}{n}  
\end{equation}
where $V(\theta^*) = \theta_n^*  - \theta_1^*$ denotes the {\it variation} of $\theta^* \in \I$ and $C >0$ is a universal constant. 

Let us try to understand each of the terms on the right side of~\eqref{eq:IsoWC}. As long as the variation  $V(\theta^*)$ is not small the risk $R(\hat{\theta}, \theta^*)$ is given by $(\sigma^2 V(\theta^*)/n)^{2/3}$, up to a constant multiplicative factor. This shows that the rate of estimating any monotone function (under the $\ell_2$-loss) is $n^{-2/3}$. Moreover,~\eqref{eq:IsoWC} gives the explicit dependence of the risk on the variation of $\theta^*$ (and on $\sigma^2$). 
 
 The second term on the right side of~\eqref{eq:IsoWC} is also interesting --- when $V(\theta^*) = 0$, i.e., $\theta^*$ is a constant sequence,~\eqref{eq:IsoWC} shows that the risk of the isotonic LSE scales like $\log n/n$. This is a consequence of the fact that $\hat \theta$ chooses its blocks (of constancy) adaptively depending on the data. When $\theta^*$ is the constant sequence, $\hat \theta$ has fewer blocks (in fact, it has of the order of $\log n$ blocks; see \cite[Theorem 3]{bellec2016adaptive} and \cite[Theorem 1]{MW00}) and some of the blocks will be very  large (see e.g., the middle plot of Figure~\ref{fig:IsoReg}), so that averaging the responses within the large blocks would yield a value very close to the grand mean $\bar Y = (\sum_{i=1}^n y_i)/n$ (which has risk $\sigma^2/n$ in this problem). Thus~\eqref{eq:IsoWC} already illustrates the {\it adaptive} nature of the LSE --- the risk of the LSE $\hat \theta$ changes depending on the ``structure'' of the true $\theta^*$. In the next subsection (see~\eqref{eq:IsoAdapBd}) we further highlight this adaptive nature of the LSE.



\begin{remark}
To the best of our knowledge, inequality \eqref{eq:IsoWC} first appeared in~\cite[Theorem 1]{MW00} who proved it under the assumption that the errors $\eps_1, \dots, \eps_n$ are i.i.d.~$N(0, \sigma^2)$. Zhang~\cite{Zhang02} proved~\eqref{eq:IsoWC} for much more general  errors including the case when $\eps_1, \dots, \eps_n$ are i.i.d.~with mean zero and variance $\sigma^2$. The proof we give (in Section~\ref{sec:IsoReg-WC} of Appendix~\ref{sec:Proofs}) follows the  arguments of \cite{Zhang02}. Another proof of an inequality similar to~\eqref{eq:IsoWC} for the case of normal errors has been given recently by \cite{Chat14} who proved it as an illustration of a general technique for bounding the risk of LSEs.
\end{remark}

\begin{remark} 
The LSE over bounded monotone functions also satisfies the bound~\eqref{eq:IsoWC} and has been observed by many authors including~\cite{nemirovski1985convergence, VandeGeer90annstat, Donoho90}. Proving this result is easier, however, because of the presence of the uniform bound on the function class (such a bound is not present for the isotonic LSE). It must also be kept in mind that the bounded isotonic LSE comes with a tuning parameter that needs to chosen by the user. 
\end{remark}

\begin{remark}\label{reim}
    Inequality \eqref{eq:IsoWC} also implies that the isotonic LSE achieves the risk $(\sigma^2 V/n)^{2/3}$ for $\theta^* \in \I_V := \{\theta \in \I : \theta_n - \theta_1 \leq V\}$ (as long as $V$ is not too small) without any knowledge of $V$. It turns out that the minimax risk over $\I_V$  is of the order $(\sigma^2 V/n)^{2/3}$ as long as $V$  is in the range $\sigma/\sqrt{n} \lesssim V \lesssim \sigma n$ (see e.g., \cite[Theorem 5.3]{CGS15}). Therefore, in this wide range of $V$, the isotonic LSE is minimax (up to constant multiplicative factors) over the class $\I_V$. This is especially interesting because the isotonic LSE does not require any knowledge of $V$. This illustrates another kind of adaptation of the isotonic LSE; further details on this can be found in \cite{CL17}. 
\end{remark}


\subsection{Adaptive risk bounds}\label{sec:AdapRB}
As the isotonic LSE fit is piecewise constant, it may be reasonable to expect that when $\theta^*$ is itself a piecewise constant (with not too many pieces), the risk of $\hat \theta$ would be small. The rightmost plot of Figure~\ref{fig:IsoReg} corroborates this intuition. This leads us to our second type of risk bound for the LSE. For $\theta \in \I$, let $k(\theta) \ge 1$ denote the number of constant blocks of $\theta$, i.e., $k(\theta)$ is the integer such that $k(\theta) -1$ is the number of inequalities $\theta_i \le \theta_{i+1}$ that are strict, for $i = 1,\ldots, n-1$ (the number of jumps of $\theta$). 
\begin{theorem}\label{thm:AdapIsoReg}
Under the assumption that $\eps_1, \dots,  \eps_n$ are i.i.d.~with mean zero and variance $\sigma^2$ we have 
\begin{equation}\label{eq:IsoAdapBd}
  R(\hat{\theta}, \theta^*) \leq \inf_{\theta \in \I}
  \left[\frac{1}{n} \|\theta^* - \theta\|^2 + \frac{4\sigma^2
      k(\theta)}{n} \log \frac{en}{k(\theta)} \right]
\end{equation}
for every $\theta^* \in \R^n$. 
\end{theorem}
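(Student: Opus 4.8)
The plan is to exploit the fact that $\hat\theta$ is the projection of $Y$ onto the closed convex cone $\I$, and to control the "oracle" error for each fixed competitor $\theta \in \I$ via the variational characterization \eqref{eq:Charac}. Fix an arbitrary $\theta \in \I$. Starting from $\|\hat\theta - \theta^*\|^2 = \|\hat\theta - \theta\|^2 + 2\langle \hat\theta - \theta, \theta - \theta^*\rangle + \|\theta - \theta^*\|^2$ and using $Y = \theta^* + \eps$ together with \eqref{eq:Charac} applied at the point $\theta \in \I$, one gets the standard basic inequality
\begin{equation*}
\|\hat\theta - \theta^*\|^2 \le \|\theta - \theta^*\|^2 + 2\langle \eps, \hat\theta - \theta\rangle - \|\hat\theta - \theta\|^2 .
\end{equation*}
The crux is then to bound $\E[\,2\langle \eps, \hat\theta - \theta\rangle - \|\hat\theta - \theta\|^2\,]$. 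I would write $\langle \eps, \hat\theta - \theta\rangle \le \|\hat\theta - \theta\| \cdot \sup_{u} \langle \eps, u/\|u\|\rangle$ where the supremum is over $u$ in a suitable cone (e.g.\ differences $\phi - \theta$ with $\phi \in \I$, or after projecting onto the tangent cone of $\I$ at $\hat\theta$), and then use $2ab - b^2 \le a^2$ to reduce matters to bounding the expected squared "localized Gaussian/statistical width" $\E[\,Z^2\,]$ where $Z := \sup \langle \eps, u\rangle / \|u\|$ over the relevant set of directions.

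The key quantity is therefore a metric-entropy / Gaussian-width estimate for the cone of monotone differences, localized around a $\theta$ with $k(\theta)$ constant blocks. Here I would use the block structure: a monotone vector $\phi$ together with the constraint coming from the $k=k(\theta)$ blocks of $\theta$ decomposes, after an appropriate change of coordinates, into $k$ independent isotonic regression subproblems, one per block of $\theta$. On a block of length $n_j$ the relevant quantity is the expected squared norm of the projection of a white-noise vector onto a monotone cone, which is known to be $O(\sigma^2 \log(e n_j))$ (this is exactly the $V=0$ case behind \eqref{eq:IsoWC}; cf.\ the $\log n$ bound on the number of blocks of the isotonic LSE of a constant sequence). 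Summing over blocks gives a bound of the form $C\sigma^2 \sum_{j=1}^{k}\log(e n_j)$, and then concavity of $\log$ (Jensen, with $\sum n_j = n$) yields $\sum_j \log(e n_j) \le k \log(e n/k)$. Combining with the basic inequality and optimizing the constant to reach $4\sigma^2$ gives
\begin{equation*}
\E\|\hat\theta - \theta^*\|^2 \le \|\theta - \theta^*\|^2 + 4\sigma^2 k(\theta)\log\frac{en}{k(\theta)},
\end{equation*}
and dividing by $n$ and taking the infimum over $\theta \in \I$ yields \eqref{eq:IsoAdapBd}. Since no assumption $\theta^*\in\I$ was used (only $\theta\in\I$ via \eqref{eq:Charac}), the bound holds for every $\theta^*\in\R^n$.

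The main obstacle, and the part requiring genuine care, is the localized width bound: making precise the reduction to $k$ independent blockwise isotonic problems and getting the constant sharp enough to land at $4\sigma^2$ with the clean $\log(en/k(\theta))$ form rather than a messier $\sum_j\log(en_j)$ or a suboptimal constant. I would handle this by working directly with the statistical-dimension / projection interpretation: for a fixed partition into blocks $B_1,\dots,B_k$, the tangent-cone direction decomposes orthogonally across blocks, the within-block direction is a monotone unit vector, and $\E[\|\Pi_{\text{monotone}}\eps\|^2]$ on a block of size $n_j$ is at most $\sigma^2\big(1+\log n_j\big)$ up to absolute constants (equivalently, the expected number of blocks of a constant sequence's isotonic fit is $\le 1+\log n_j$), after which Jensen's inequality finishes it. The i.i.d.\ (non-Gaussian) error assumption is accommodated because only first and second moments of $\eps$ enter the relevant expectations, exactly as in Zhang's treatment of \eqref{eq:IsoWC}. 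A subtlety to watch is the degenerate case $k(\theta)=1$ (constant $\theta$), where $\log(en/k(\theta)) = \log(en)$ and the bound must still reduce correctly to the $V=0$ regime; and the edge case where some block has length $1$, contributing $\log e = 1$, which is harmless.
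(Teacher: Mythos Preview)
Your approach is essentially the same as the paper's. The paper packages your basic inequality and the step $2\langle \eps, \hat\theta-\theta\rangle - \|\hat\theta-\theta\|^2 \le \sup_{v\in T_\I(\theta)}\bigl(2\langle\eps,v\rangle-\|v\|^2\bigr)=\|\Pi_{T_\I(\theta)}(\eps)\|^2$ into its Lemma~\ref{lem:AdapRB} (due to Bellec/Oymak--Hassibi), and then identifies $T_\I(\theta)=\I_{n_1}\times\dots\times\I_{n_k}$ so that $\E\|\Pi_{T_\I(\theta)}(Z)\|^2=\sum_j\E\|\Pi_{\I_{n_j}}(Z^{(j)})\|^2$, exactly your blockwise decomposition. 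One small slip: the relevant tangent cone is at $\theta$, not at $\hat\theta$ --- you need $\hat\theta-\theta\in T_\I(\theta)$, which holds because $\hat\theta\in\I$.

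The only place where you are genuinely vague and where the paper supplies a concrete argument is the per-block bound $\E\|\Pi_{\I_m}(Z)\|^2\le 4\sum_{j=1}^m 1/j\le 4\log(em)$ for i.i.d.\ mean-zero, finite-variance $Z$. Here ``only first and second moments enter'' is not quite the mechanism: the paper uses the min--max formula \eqref{eq:IsoCharac} to bound $U_j\le\max_{u\le j}\bar Z_{u,n}$, observes that $\bar Z_{1,n},\dots,\bar Z_{j,n}$ is a martingale, applies Doob's $L^2$ maximal inequality to get $\E(U_j)_+^2\le 4\,\E(\bar Z_{j,n})_+^2$ and similarly for $(U_j)_-^2$, and then uses a symmetrization (swap $(Z_1,\dots,Z_n)\leftrightarrow(Z_n,\dots,Z_1)$) to add the two bounds and arrive at $8\sum_j 1/j$ for $2\,\E\|\Pi_{\I_m}(Z)\|^2$. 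This is where the constant $4$ comes from and why Gaussianity is not needed. Your Jensen step $\sum_j\log(en_j)\le k\log(en/k)$ is then exactly how the paper finishes.
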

Note that $\theta^*$ in Theorem \ref{thm:AdapIsoReg} can be any arbitrary vector in $\R^n$ (it is not required that $\theta^* \in \I$). An important special case of inequality~\eqref{eq:IsoAdapBd} arises when $\theta^* \in \I$ and $\theta$ is taken to be $\theta^*$ in order to obtain: 
\begin{equation}\label{mada}
  R(\hat \theta, \theta^*) \le \frac{4\sigma^2 k(\theta^*)}{n} \log \frac{en}{k(\theta^*)}. 
\end{equation}
It makes sense to compare~\eqref{mada} with the worst case risk bound \eqref{eq:IsoWC}. Suppose, for example,  $\theta_j^* = \mathbf{1}\{j > n/2\}$ (here $\mathbf{1}$ denotes the indicator function) so that $k(\theta^*) = 2$ and $V(\theta^*) = 1$. Then the risk bound in~\eqref{eq:IsoWC} is essentially $(\sigma^2/n)^{2/3}$ while the right side of~\eqref{mada} is $(8\sigma^2/n) \log (en/2)$ which is much smaller than $(\sigma^2/n)^{2/3}$. More generally, if $\theta^*$ is piecewise constant with $k$ blocks then $k(\theta^*) = k$ so that inequality~\eqref{mada} implies that the risk is given by the parametric rate $k \sigma^2/n$ with a logarithmic multiplicative factor of $4 \log(en/k)$ --- this is a much stronger bound compared to \eqref{eq:IsoWC} when $k$ is small. 

Inequality \eqref{mada} is an example of an \textit{oracle inequality}. This is because of the following. Let $\hat{\theta}^{OR}$ denote the oracle piecewise constant estimator of $\theta^*$ which estimates $\theta^*$ by the mean of $Y$ in each constant block of $\theta^*$ (note that $\hat{\theta}^{OR}$ uses knowledge of the locations of the constant blocks of $\theta^*$ and hence is an oracle estimator). It is easy to see then that the risk of $\hat{\theta}^{OR}$ is given by 
$$R(\hat \theta^{OR}, \theta^*) = \frac{\sigma^2 k(\theta^*)}{n}.$$
As a result, inequality \eqref{mada} can be rewritten as 
\begin{equation}\label{mora}
  R(\hat{\theta}, \theta^*) \leq \left(4 \log \frac{en}{k(\theta^*)} \right) R(\hat{\theta}^{OR}, \theta^*). 
\end{equation}
Because this involves a comparison of the risk of the LSE $\hat{\theta}$ with that of the oracle  estimator $\hat{\theta}^{OR}$, inequality~\eqref{mada} is referred to as an oracle inequality. Inequality \eqref{mora} shows that the isotonic LSE, which uses no knowledge of $k(\theta^*)$ and the positions of the blocks, has essentially the same risk performance as the oracle piecewise constant  estimator (up to the multiplicative logarithmic factor $4 \log (en/k(\theta^*))$). This is indeed remarkable! 

For certain piecewise constant vectors $\theta^*$ with $k$ blocks, it might be possible to approximate $\theta^*$ closely with another piecewise constant vector $\tilde{\theta}$ having $k'$ blocks where $k' < k$. In such cases, it makes sense to compare the performance of the isotonic estimator $\hat{\theta}$ to the oracle piecewise constant estimator with $k'$ blocks. Such a comparison is achieved by inequality \eqref{eq:IsoAdapBd} which is a stronger inequality than \eqref{mada}. In fact, \eqref{eq:IsoAdapBd} can  actually be viewed as a more general oracle inequality where the behavior of the isotonic LSE is compared with oracle piecewise constant estimators even when $\theta^* \notin \I$. We would like to mention here that, in this context, \eqref{eq:IsoAdapBd} is referred to as a \textit{sharp oracle inequality} because the leading constant in front of the $\norm{\theta^* - \theta}^2/n$ term on the right-hand side of \eqref{eq:IsoAdapBd} is equal to one. We refer to \cite{Bellec15} for a detailed  explanation of oracle and sharp oracle inequalities. 

Based on the discussion above, it should be clear to the reader that the adaptive risk bound~\eqref{eq:IsoAdapBd} complements the worst case bound~\eqref{eq:IsoWC} as it  gives much finer information about how well any particular $\theta^*$ (depending on its `complexity') can be estimated by the LSE $\hat \theta$.

\begin{remark}[Model misspecification]\label{momi}
As already mentioned, the sharp oracle inequality~\eqref{eq:IsoAdapBd} needs no assumption on $\theta^*$  (which can be any arbitrary vector in $\R^n$), i.e., the inequality holds true even when $\theta^* \notin \I$. See~\cite[Section 6]{CGS15} for another way of handling model misspecification, where $\hat \theta$ is compared with the ``closest'' element to $\theta^*$ in $\I$ (and not $\theta^*$).
\end{remark}

\begin{remark}
To the best of our knowledge, an inequality of the form \eqref{eq:IsoAdapBd} first explicitly appeared in \cite[Theorem 3.1]{CGS15} where it was proved that 
\begin{equation}\label{wea}
  R(\hat{\theta}, \theta^*) \leq 4 \inf_{\theta \in \I}
  \left[\frac{1}{n} \|\theta^* - \theta\|^2 + \frac{4\sigma^2
      k(\theta)}{n} \log \frac{en}{k(\theta)} \right] 
\end{equation}
 under the additional assumption that $\theta^* \in \I$. The proof of this inequality given in \cite{CGS15} is based on ideas developed in \cite{Zhang02}. Note the additional constant factor of $4$ in the above inequality compared to \eqref{eq:IsoAdapBd}. 

Under the stronger assumption $\eps \sim N_n(0, \sigma^2 I_n)$, Bellec~\cite[Theorem 3.2]{Bellec15} improved \eqref{wea} and proved that 
\begin{equation}\label{eq:IsoAdapBd2}
R(\hat \theta, \theta^*) \le \inf_{\theta \in \I} \left[\frac{1}{n} \|\theta^* - \theta\|^2 + \frac{\sigma^2 k(\theta)}{n} \log \frac{e n}{k(\theta)} \right],
\end{equation}
for every $\theta^* \in \R^n$. A sketch of the proof of this inequality is given in Subsection \ref{Bellec-Adap} of Appendix~\ref{sec:Proofs}. A remarkable feature of this bound is that the multiplicative constants involved are  all tight, which implies, in particular, that 
\begin{equation*}
  R(\hat{\theta}, \theta^*) \leq \inf_{\theta \in \I}
  \left[\frac{1}{n} \|\theta^* - \theta\|^2 + C\frac{\sigma^2
      k(\theta)}{n} \log \frac{en}{k(\theta)} \right]
\end{equation*}
cannot hold for every $\theta^*$ if $C < 1$. This follows from the fact that when $\theta^* = (0, 0, \dots, 0)  \in \I$ and $\eps \sim N_n(0, \sigma^2 I_n)$, the risk $R(\hat{\theta}, \theta^*)$ exactly equals $\sigma^2 \sum_{j=1}^n   1/j \asymp \sigma^2 \log n$; see~\cite{Bellec15} for an explanation. It must be noted that this implies, in particular, that the logarithmic term in these adaptive risk bounds cannot be removed. 

Note that inequality \eqref{eq:IsoAdapBd} has an additional factor of $4$ compared to \eqref{eq:IsoAdapBd2}   on the second term in the right-hand side. This is because the errors $\eps_1, \dots, \eps_n$ can be non-Gaussian in Theorem \ref{thm:AdapIsoReg}. 
\end{remark}
\begin{remark}
One may attempt to prove \eqref{eq:IsoWC}  from the adaptive
  risk bound \eqref{eq:IsoAdapBd} by approximating arbitrary $\theta^*
  \in \I$ via $\theta \in \I$ with a bound on $k(\theta)$. However, it
  is likely that such an approach will lead to additional logarithmic
  terms on the right hand side of \eqref{eq:IsoWC} (see e.g.,~\cite[Theorem 4.1]{CGS15}). 
\end{remark}

\begin{remark}\label{beli}
  For some choices of $\theta^* \in \M$, it is possible to obtain bounds on the risk $R(\hat{\theta}, \theta^*)$ of the LSE which combine aspects of both \eqref{eq:IsoWC} and \eqref{eq:IsoAdapBd}. For example, if $\theta^*$ is piecewise constant with $k$ blocks for $1 \leq i \leq n/2$ and if it is strictly increasing with variation bounded by $V$ for $n/2 \leq i \leq n$, then it can be shown that the risk of the LSE will be bounded from above by a constant multiple of $\sigma^2 (k/n) \log ({e n}/{k})  + (\sigma^2 V/n)^{2/3}$. Techniques for obtaining such hybrid risk bounds in isotonic regression can be found in \cite[Sections 2 and 3]{Zhang02}. 
\end{remark}



\subsubsection{Adaptive Risk Bounds for $R^{(p)}(\hat{\theta}, \theta^*)$.} \label{pp}
The risk bound \eqref{eq:IsoAdapBd} (or more specifically \eqref{mora}) implies that the isotonic LSE pays a logarithmic price in risk compared to the oracle piecewise constant estimator. This fact is strongly tied to the fact that the risk is measured via squared error loss (as in \eqref{eq:Risk}). The story will be different if  one measures risk under $\ell_p$-metrics for $p \neq 2$. To illustrate this, we shall describe adaptive bounds for the risk $R^{(p)}(\hat{\theta}, \theta^*)$  defined in \eqref{rlp}.

The following result bounds the risk $R^{(p)}(\hat{\theta}, \theta^*)$
assuming that $\theta^* \in \I$. The risk bounds involve a positive
constant $C_p$ that depends on $p$ alone. Explicit expressions for
$C_p$ can be gleaned from the proof of Theorem \ref{thm:L_p} (in Section~\ref{sec:R_p-Risk}).  

\begin{theorem}\label{thm:L_p}
Assume that the errors $\eps_1, \dots, \eps_n$ are i.i.d.~$N(0, \sigma^2)$. Fix $\theta^* \in \I$ and let $p \geq 1, p \neq 2$. Let $k$
denote the 
number of constant blocks of $\theta^*$ and let the lengths of the
blocks be denoted by $n_1, \dots, n_k$. We then have  
\begin{equation}\label{pab.eq}
  R^{(p)}(\hat{\theta}, \theta^*) \leq C_p \frac{\sigma^p}{n}
  \sum_{i=1}^k n_i^{(2 - p)_+/2} \leq C_p \sigma^p \left(\frac{k}{n}
  \right)^{\min(p, 2)/2}
\end{equation}
where $C_p$ is a positive constant that depends on $p$ alone.   
\end{theorem}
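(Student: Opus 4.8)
The plan is to combine the explicit min--max representation \eqref{eq:IsoCharac} of the isotonic LSE with the piecewise-constant structure of $\theta^*$, reducing the whole problem to a one-sided deviation bound for a single coordinate $\hat\theta_j$. Let $B_1,\dots,B_k$ be the constant blocks of $\theta^*$, write $B_i=\{a_i,a_i+1,\dots,b_i\}$ with $|B_i|=n_i$ and $\theta^*_l=c_i$ for $l\in B_i$ (so $c_1<\dots<c_k$ since $\theta^*\in\I$), and put $\bar u_{s:t}:=(t-s+1)^{-1}\sum_{l=s}^t u_l$. Then
\[
  R^{(p)}(\hat\theta,\theta^*)=\frac1n\sum_{i=1}^k\sum_{j\in B_i}\E\bigl|\hat\theta_j-c_i\bigr|^p ,
\]
so it is enough to bound $\sum_{j\in B_i}\E|\hat\theta_j-c_i|^p$ by $C_p\sigma^p n_i^{(2-p)_+/2}$ for each fixed $i$. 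Fixing also $j\in B_i$, formula \eqref{eq:IsoCharac} gives $\hat\theta_j=\min_{v\ge j}\max_{u\le j}\bar Y_{u:v}$. Taking $v=b_i$ in the outer minimum and writing $\bar Y_{u:b_i}=\bar\theta^*_{u:b_i}+\bar\eps_{u:b_i}$, one uses that, since $\theta^*$ is nondecreasing with $\theta^*_{b_i}=c_i$, every average $\bar\theta^*_{u:b_i}$ with $u\le b_i$ is $\le c_i$, to get $\hat\theta_j-c_i\le\max_{u\le j}\bar\eps_{u:b_i}$; symmetrically, taking $u=a_i$ in the inner maximum and using $\bar\theta^*_{a_i:v}\ge c_i$ for $v\ge a_i$, one gets $\hat\theta_j-c_i\ge\min_{v\ge j}\bar\eps_{a_i:v}$. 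Integrating the two resulting tail bounds against $pt^{p-1}\,dt$ yields
\[
  \E|\hat\theta_j-c_i|^p\le\E\Bigl[\bigl(\max_{u\le j}\bar\eps_{u:b_i}\bigr)_+^p\Bigr]+\E\Bigl[\bigl(\max_{v\ge j}(-\bar\eps_{a_i:v})\bigr)_+^p\Bigr].
\]

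Each term above is the $p$-th moment of a one-sided maximum of averages of consecutive noise variables, and every interval appearing in the first (resp.\ the second) maximum has length at least $T_1:=b_i-j+1$ (resp.\ $T_2:=j-a_i+1$). After reindexing, $\max_{u\le j}\bar\eps_{u:b_i}$ equals $\max_{T_1\le m\le b_i}\bar\eta_m$, where $\bar\eta_m:=m^{-1}\sum_{l=1}^m\eta_l$ is the running mean of i.i.d.\ $N(0,\sigma^2)$ variables. The key structural observation is that $\{\bar\eta_m\}$, read with $m$ \emph{decreasing}, is a martingale: by exchangeability $\E[\bar\eta_m\mid\bar\eta_{m+1},\bar\eta_{m+2},\dots]=\bar\eta_{m+1}$. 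Hence Doob's $L^{q}$ maximal inequality with $q=\max(p,2)$ bounds $\E[\max_{T_1\le m\le b_i}|\bar\eta_m|^p]$ by a $p$-dependent multiple of $\E|\bar\eta_{T_1}|^p=\sigma^p\,\E|Z|^p\,T_1^{-p/2}$ ($Z$ standard normal); for $1\le p<2$ one first applies Doob in $L^2$ and then Cauchy--Schwarz, and for $p=1$ the unavailable $L^1$ maximal inequality is likewise replaced by Cauchy--Schwarz from the $L^2$ bound. This gives $\E[(\max_{u\le j}\bar\eps_{u:b_i})_+^p]\le C_p\sigma^p(b_i-j+1)^{-p/2}$ and, symmetrically, $\E[(\max_{v\ge j}(-\bar\eps_{a_i:v}))_+^p]\le C_p\sigma^p(j-a_i+1)^{-p/2}$, so that
\[
  \E|\hat\theta_j-c_i|^p\le C_p\,\sigma^p\Bigl[(b_i-j+1)^{-p/2}+(j-a_i+1)^{-p/2}\Bigr].
\]

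Summing this over $j\in B_i$ gives $\sum_{j\in B_i}\E|\hat\theta_j-c_i|^p\le 2C_p\sigma^p\sum_{m=1}^{n_i}m^{-p/2}$, and this is exactly where the hypothesis $p\neq 2$ is used: for $p>2$ the series converges ($\sum_{m\ge1}m^{-p/2}\le\zeta(p/2)$, producing the factor $n_i^0$), for $1\le p<2$ one has $\sum_{m=1}^{n_i}m^{-p/2}\le\frac{2}{2-p}\,n_i^{1-p/2}$ (producing the factor $n_i^{(2-p)/2}$), while at $p=2$ this is the harmonic sum $\asymp\log n_i$. Dividing by $n$ and summing over the blocks yields the first inequality in \eqref{pab.eq}, with an explicit $C_p$ assembled from Doob's constant $(q/(q-1))^q$, the Gaussian moment $\E|Z|^p$, and either $\zeta(p/2)$ or $\frac{2}{2-p}$. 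The second inequality in \eqref{pab.eq} then follows only from $\sum_i n_i=n$: for $p\ge2$ it is the identity $\frac1n\sum_i1=k/n$, and for $p<2$ the power-mean (Jensen) inequality gives $\frac1n\sum_{i=1}^k n_i^{(2-p)/2}\le\frac1n\cdot k\,(n/k)^{(2-p)/2}=(k/n)^{p/2}$.

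The step I expect to be the crux is the moment bound for the running-mean maxima: one needs the \emph{sharp} rate $T^{-p/2}$ for $\E[\max_{m\ge T}|\bar\eta_m|^p]$, with no extra logarithmic factor --- a naive union bound over the $O(b_i)$ values of $m$ would insert a spurious $(\log b_i)^{p/2}$ and wreck the adaptive rate. It is the reverse-martingale identity together with Doob's inequality (and, for $p=1$, the Cauchy--Schwarz detour through $L^2$) that delivers the correct rate. It is also worth being explicit that the constant $C_p$ degenerates as $p\to2$ (through $\frac{2}{2-p}$ from below and $\zeta(p/2)$ from above), which is consistent with --- and is the reason for --- the $\log n_i$ blow-up exactly at $p=2$ that forces its exclusion from the statement.
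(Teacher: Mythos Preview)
Your proof is correct and follows essentially the same route as the paper's: the min--max formula, the choice $v=b_i$ (resp.\ $u=a_i$) so that the $\theta^*$-average drops out on the block, and Doob's inequality applied to the reverse-martingale structure of the running means. The paper phrases the martingale as $\bar Z_{u,b_i}$ indexed forward in $u$, which after your reindexing is exactly the reverse martingale $\bar\eta_m$ with $m$ decreasing; the terminal variance $1/T_1$ and hence the $T_1^{-p/2}$ rate are identical.

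The one genuine (though minor) divergence is in the treatment of $1\le p<2$. The paper applies Doob's $L^p$ inequality directly for every $p>1$, picking up the constant $(p/(p-1))^p$, and then runs a separate tail-integration argument for $p=1$ where Doob's $L^1$ bound is unavailable. You instead route all of $p\in[1,2)$ through Doob in $L^2$ followed by Jensen $\E X^p\le(\E X^2)^{p/2}$. Your variant is slightly cleaner in that it treats $p=1$ and $1<p<2$ uniformly; the paper's variant yields a somewhat more explicit constant. Both give the same rate and both blow up (yours through Jensen losing nothing but the subsequent $\tfrac{2}{2-p}$, theirs through the same $\tfrac{2}{2-p}$) as $p\uparrow 2$, consistent with the logarithmic correction at $p=2$.
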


\begin{remark}
As stated, Theorem \ref{thm:L_p} appears to be new even though its conclusion is implicit in the detailed risk calculations of~\cite{Zhang02} for isotonic regression. We have assumed that $\eps_1, \dots, \eps_n$ are normal in Theorem \ref{thm:L_p} but it is possible to allow non-Gaussian errors by  imposing suitable moment conditions. 
\end{remark}

\begin{remark}\label{belia}
  From an examination of the proof of Theorem \ref{thm:L_p} (given in Subsection \ref{sec:R_p-Risk}), it is evident that the constant $C_p$ tends to  $+\infty$ as $p \rightarrow 2$. Note that this makes sense because when $p = 2$, the right hand side of \eqref{pab.eq} equals $C_p \sigma^2 k/n$ and we know from the previous subsection that there must be a logarithmic term (in $n$) for the risk when $p = 2$. It is helpful here to note that by Jensen's inequality (and the bound \eqref{mada}), we have, for $1 \leq p \le 2$, the bound
  \begin{equation*}
    R^{(p)}(\hat{\theta}, \theta^*) \leq \left(R^{(2)}(\hat{\theta}, \theta^*) \right)^{p/2} \leq 4^{p/2} \sigma^p \left(\frac{k}{n} \right)^{p/2} \left(\log \frac{en}{k} \right)^{p/2}. 
  \end{equation*}
which does not explode as $p \uparrow 2$. The above bound can also be obtained by modifying the proof of Theorem \ref{thm:L_p} where in place of the inequality 
\begin{equation}\label{cu1}
  \sum_{j=1}^n j^{-p/2} \leq \frac{2}{2 - p} n^{1- (p/2)} \qt{for $1
    \leq p < 2$},
\end{equation}
we use 
\begin{equation}\label{cu2}
  \sum_{j=1}^n j^{-p/2} \leq n \left(\frac{1}{n} \sum_{j=1}^n
    \frac{1}{j} \right)^{p/2} \le   n \left(\frac{\log (en)}{n}
  \right)^{p/2} \qt{for all $1 \leq p \leq 2$}
\end{equation}
which is again a consequence of Jensen's inequality. 
\end{remark}

Let us now compare the isotonic LSE to the oracle piecewise constant estimator $\hat{\theta}^{OR}$ (introduced in the previous subsection) in terms of the $\ell_p$-risk. It is easy to verify that the risk of $\hat{\theta}^{OR}$ under the $\ell_p$-loss is given by 
  \begin{equation}\label{orp}
    R^{(p)}(\hat{\theta}^{OR}, \theta^*) = (\E |\eta|^p) \sigma^p \frac{1}{n}
    \sum_{i=1}^k n_i^{(2 - p)/2} 
  \end{equation}
for every $p > 0$ where $\eta := \eps_1/\sigma$ is standard normal. 
  
Comparing \eqref{pab.eq} and \eqref{orp}, we see that the isotonic LSE performs at the same rate (up to constant multiplicative factors) as the oracle piecewise constant estimator for $1 \leq p < 2$ (there is not even a logarithmic price for these values of $p$). When $p = 2$, as seen from \eqref{mora}, the isotonic LSE  pays a logarithmic price of $4 \log (en/k(\theta^*))$. For $p > 2$ however, there is a significant price that is paid. For example, if all the constant blocks have roughly equal size, then the oracle estimator's risk, when $p > 2$, is of order $(k/n)^{p/2}$ while the bound in~\eqref{pab.eq} is of order $k/n$. It is also actually true that if $\I_k$ denotes the class of all $\theta^* \in \I$ with $k$ constant blocks, then (for a positive constant $C_p$) 
\begin{equation}\label{rlb}
  \sup_{\theta^* \in \I_k} R^{(p)}(\hat{\theta}, \theta^*) \geq C_p \sigma^p \left(\frac{k}{n} \right) \qt{for every $p > 2$}
\end{equation}
and this confirms the fact that there is a significant price to be paid by the LSE (compared to the oracle piecewise constant estimator) for estimating $\theta^* \in \I_k$ when the risk is measured by $R^{(p)}(\hat{\theta}, \theta^*)$ for $p > 2$. A sketch of the proof of \eqref{rlb} is given in Subsection \ref{prlb}. 

Theorem \ref{thm:L_p} can be generalized to situations where $\theta^* \in \I$ has a large number of constant blocks provided it can be well-approximated by $\theta \in \I$ with a small (compared to $n$) number of constant blocks. This result is given below (and proved in Subsection \ref{ppro}). It is  similar in spirit to \eqref{eq:IsoAdapBd} even though it is not as sharp or clean as \eqref{eq:IsoAdapBd}. We need some notation to state this result. An interval partition $\pi$ of $n$ is a finite sequence of positive integers that sum to $n$. Let $\Pi$ denote the set of all such interval partitions $\pi$ of $n$. For each $\pi = (n_1, \dots, n_k) \in \Pi$, let $k(\pi) := k$. The variation of $\theta \in \I$ with respect to $\pi \in \Pi$ is defined as
\begin{equation*}
  V_{\pi}(\theta) := \max_{1 \leq i \leq k} \left(\theta_{s_i} - \theta_{s_{i-1}} \right) 
\end{equation*}
where $s_i, 0 \leq i \leq k$ are defined (with respect to the partition $\pi := (n_1, \dots, n_k)$) as $s_0  := 0$ and $s_i := n_1 + \dots + n_i$ for $i = 1, \dots, k$. 
\begin{theorem}\label{pro}
  Assume that the errors $\epsilon_1, \dots, \epsilon_n$ are i.i.d $N(0, \sigma^2)$. Fix $\theta^* \in \I$ and let $p \geq 1, p \neq 2$. Then 
  \begin{equation}\label{pro.eq}
    R^{(p)}(\hat{\theta}, \theta^*) \leq C_p \inf_{\pi \in \Pi}\left( \left[V_{\pi}(\theta^*)\right]^p + \sigma^p  \left(\frac{k(\pi)}{n}  \right)^{\min(p, 2)/2} \right)
  \end{equation}
  for a positive constant $C_p$ that depends on $p$ alone. 
\end{theorem}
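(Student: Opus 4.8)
The plan is to combine Theorem~\ref{thm:L_p} with a partitioning-plus-triangle-inequality argument in the spirit of the derivation of hybrid risk bounds in \cite{Zhang02}. Fix an arbitrary interval partition $\pi = (n_1, \dots, n_k) \in \Pi$ with associated breakpoints $s_0 = 0 < s_1 < \dots < s_k = n$, and let $V := V_\pi(\theta^*)$. The key idea is to compare $\theta^*$ with a ``flattened'' vector $\tilde\theta \in \I$ that is constant on each block $B_i := \{s_{i-1}+1, \dots, s_i\}$; a natural choice is $\tilde\theta_j := \theta^*_{s_{i-1}+1}$ for $j \in B_i$ (so $\tilde\theta$ is the left-endpoint-on-each-block version of $\theta^*$), which lies in $\I$ because $\theta^*$ is nondecreasing and the blocks are ordered. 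By construction $\tilde\theta$ has at most $k(\pi) = k$ constant blocks, and on each block $B_i$ we have $0 \le \theta^*_j - \tilde\theta_j \le \theta^*_{s_i} - \theta^*_{s_{i-1}+1} \le V$, so $\|\theta^* - \tilde\theta\|_\infty \le V$ and hence $\frac1n\|\theta^* - \tilde\theta\|_p^p \le V^p$.

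The main step is to control $R^{(p)}(\hat\theta, \theta^*)$ in terms of $R^{(p)}(\hat\theta, \tilde\theta)$ plus the approximation error $\frac1n\|\theta^* - \tilde\theta\|_p^p$. For $p \ge 1$ the map $u \mapsto \|u\|_p^p$ is not subadditive, but $\|a+b\|_p^p \le 2^{p-1}(\|a\|_p^p + \|b\|_p^p)$, so $\frac1n\E_{\theta^*}\|\hat\theta - \theta^*\|_p^p \le 2^{p-1}\big(\frac1n\E_{\theta^*}\|\hat\theta - \tilde\theta\|_p^p + \frac1n\|\tilde\theta - \theta^*\|_p^p\big)$. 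The subtlety is that $\hat\theta$ is still the isotonic LSE computed from $Y = \theta^* + \eps$, not from data centered at $\tilde\theta$; however, since $\hat\theta$ depends on $Y$ only through $\theta^*$ and $\eps$, and $\theta^* = \tilde\theta + (\theta^* - \tilde\theta)$ with $\theta^* - \tilde\theta$ a fixed nonnegative perturbation of sup-norm at most $V$, one can bound $\E_{\theta^*}\|\hat\theta - \tilde\theta\|_p^p$ by comparing with the LSE run under true mean $\tilde\theta$. The cleanest route is to invoke a monotonicity/Lipschitz property of the isotonic projection: $\hat\theta(Y)$ is $1$-Lipschitz (componentwise nonexpansive in $\ell_\infty$ and in $\ell_2$) in $Y$, so writing $\hat\theta^{\tilde{}} := \hat\theta(\tilde\theta + \eps)$ we get $\|\hat\theta - \hat\theta^{\tilde{}}\|_\infty \le \|\theta^* - \tilde\theta\|_\infty \le V$, hence $\frac1n\|\hat\theta - \hat\theta^{\tilde{}}\|_p^p \le V^p$; then apply Theorem~\ref{thm:L_p} to $\hat\theta^{\tilde{}}$ (whose true mean $\tilde\theta \in \I$ has at most $k$ blocks) to obtain $\frac1n\E_{\tilde\theta}\|\hat\theta^{\tilde{}} - \tilde\theta\|_p^p \le C_p \sigma^p (k/n)^{\min(p,2)/2}$. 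Chaining these three pieces with the $2^{p-1}$-type constants yields $R^{(p)}(\hat\theta, \theta^*) \le C_p'\big(V^p + \sigma^p(k/n)^{\min(p,2)/2}\big)$ for a constant $C_p'$ depending only on $p$. Since $\pi$ was arbitrary, taking the infimum over $\pi \in \Pi$ gives \eqref{pro.eq}.

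The main obstacle I anticipate is justifying the nonexpansiveness step rigorously and making sure the constants stay $p$-dependent only: while the $\ell_2$-nonexpansiveness of projection onto a convex cone is standard, the $\ell_\infty$ (and hence $\ell_p$) bound $\|\hat\theta(Y) - \hat\theta(Y')\|_\infty \le \|Y-Y'\|_\infty$ for the isotonic projection needs the min-max formula \eqref{eq:IsoCharac} (the map $Y \mapsto \min_{v\ge j}\max_{u\le j}\frac{1}{v-u+1}\sum_{l=u}^v Y_l$ is a composition of averages, mins and maxes, each $\ell_\infty$-nonexpansive), so this is available but should be stated carefully. An alternative that avoids the Lipschitz argument entirely is to mimic the proof of Theorem~\ref{thm:L_p} directly: decompose the risk blockwise along $\pi$, and on each block bound the contribution of $\hat\theta - \theta^*$ by the contribution of $\hat\theta - \tilde\theta$ (handled by the single-block estimates already appearing in the proof of Theorem~\ref{thm:L_p}) plus the within-block variation $V$; summing over the $k$ blocks of $\pi$ produces exactly the two terms $V^p$ and $\sigma^p(k/n)^{\min(p,2)/2}$. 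Either way the inequalities \eqref{cu1}–\eqref{cu2} from the proof of Theorem~\ref{thm:L_p} are what generate the $\min(p,2)/2$ exponent, and no new ideas beyond bookkeeping of constants are needed.
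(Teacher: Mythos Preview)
Your primary route---flatten $\theta^*$ to a block-constant $\tilde\theta$, use the $\ell_\infty$-nonexpansiveness of the isotonic projection to couple $\hat\theta(Y)$ with $\hat\theta(\tilde\theta+\eps)$, and then invoke Theorem~\ref{thm:L_p} as a black box---is correct, and the $\ell_\infty$-Lipschitz step is exactly as you say, an immediate consequence of the min--max formula~\eqref{eq:IsoCharac}. This is, however, a genuinely different argument from the paper's. The paper does not introduce $\tilde\theta$ or any coupling; instead it reopens the proof of Theorem~\ref{thm:L_p} and modifies the choice of $m_1(j), m_2(j)$: rather than aligning them with the constant blocks of $\theta^*$, it aligns them with the blocks of the given partition $\pi$. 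Then the pointwise inequality \eqref{sho}, combined with $(a+b)^p \le C_p(a^p+b^p)$, produces a bias term $(\bar{\theta^*}_{j,j+m_1(j)} - \theta^*_j)^p \le [V_\pi(\theta^*)]^p$ and a noise term handled exactly as before via Doob's inequality. In other words, your ``alternative'' at the end---mimic the proof of Theorem~\ref{thm:L_p} blockwise along $\pi$---is precisely what the paper does.

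What each buys: the paper's approach is shorter and self-contained, needing no auxiliary lemma beyond what is already in the proof of Theorem~\ref{thm:L_p}. Your approach is more modular---it treats Theorem~\ref{thm:L_p} as a finished product and isolates the perturbation step in a clean, independently useful Lipschitz statement---at the cost of an extra triangle-inequality pass and slightly worse constants (an additional factor of roughly $2^{p}$).
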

Unlike \eqref{eq:IsoAdapBd}, inequality \eqref{pro.eq} is not a sharp oracle inequality because it only holds for $\theta^* \in \I$ (and not for general $\theta^* \in \R^n$) and because the constant in front  of the $V_{\pi}(\theta^*)$ term is not one. However it is still useful and it includes Theorem \ref{thm:L_p} as a special case (indeed to derive \eqref{pab.eq} from \eqref{pro.eq}, just take the partition $\pi$ which corresponds to the constant blocks of $\theta^*$). The bound \eqref{pro.eq} can  also be used to obtain worst case risk bounds for the LSE in terms of the $L^p$ risk for $1 \leq p < 2$ (analogous to \eqref{eq:IsoWC}).  Indeed, it can be shown (see, for example, \cite[Lemma 11.1 in the supplementary material]{CGS15}) that for every $\theta^* \in \I$ and $\delta > 0$, there exists $\pi \in \Pi$ with 
\begin{equation*}
  V_{\pi}(\theta^*) \leq \delta ~~ \text{ and } ~~ k(\pi) \leq 1 + \frac{V(\theta^*)}{\delta}.
\end{equation*}
This implies from \eqref{pro.eq} that 
\begin{equation*}
  R^{(p)}(\hat{\theta}, \theta^*) \leq C_p \inf_{\delta > 0}  \left(\delta^p + \sigma^p \left(\frac{1}{n} + \frac{V(\theta^*)}{n\delta} \right)^{\min(p, 2)/2}  \right). 
\end{equation*}
From here, it can be shown that
\begin{equation*}
  R^{(p)}(\hat{\theta}, \theta^*) \leq C_p \left(\frac{\sigma^2 V(\theta^*)}{n} \right)^{p/3} + C_p \left(\frac{\sigma^2}{n} \right)^{p/2} \qt{for $1 \leq p < 2$}. 
\end{equation*}
It turns out that this bound cannot be improved (up to the multiplicative factor $C_p$) as argued in \cite[Theorem 2.2 and the following discussion]{Zhang02}. We would like to remark here that this method will lead to a suboptimal worst case risk bound for $R^{(p)}(\hat{\theta}, \theta^*)$ for $p > 2$.

\section{Risk bounds in other shape-restricted regression problems}\label{sec:Risk}
In this section we consider the problems of convex regression (Example \ref{ex:ConvexReg}), isotonic regression on a partially ordered set (Example \ref{ex:RegPartialOrder}), unimodal regression (Example \ref{ex:UniReg}) and shape restricted additive models (Example \ref{ex:AddMdl}). In each of these problems, we describe results related to the performance of the LSEs. The reader will notice that the risk results are not as detailed as compared to the  isotonic regression results of the previous section. 

\subsection{Convex Regression}\label{cvre}
Let us consider Example~\ref{ex:ConvexReg} where the goal is to estimate a convex function $f: [0,1] \to \R$ from regression data as in~\eqref{eq:RegMdl}. The convex LSE $\hat \theta$ is defined as the projection of $Y$ onto the closed convex cone $\K$ (see~\eqref{eq:CvxReg}). This estimator was first proposed in~\cite{Hildreth54} for the estimation of production functions and Engel curves. It can be shown that $\hat \theta$ is piecewise affine with knots only at the design points; see~\cite[Lemma 2.6]{GJW01-a}. The accuracy of the LSE, in terms of the risk $R(\hat{\theta}, \theta^*)$ (defined in \eqref{eq:Risk}), was first studied in~\cite{GS15} followed by \cite{CGS15, Bellec15, Chat16}. These results are summarized below. Earlier results on the risk under a supremum loss can be found in~\cite{HanPled76, DuembgenEtAl04}. 

Suppose that $\eps \sim N_n(0, \sigma^2 I_n)$. In~\cite{Chat16}, the following worst case risk bound for   $\hat \theta$ was given (when $x_i = i/n$ are the ordered design points):
\begin{equation}\label{eq:CvxLSE-WC}
R(\hat \theta, \theta^*) \le C \left(\frac{\sigma^2 \sqrt{T(\theta^*)}}{n} \right)^{4/5} + C \frac{\sigma^2}{n^{4/5}}
\end{equation} 
where $C>0$ is a universal constant and $T(\theta^*)$ is a constant depending on $\theta^*$ (like $V(\theta^*)$ in~\eqref{eq:IsoWC} for isotonic regression). Roughly speaking, $T(\theta^*)$ measures the ``distance'' of $\theta^*$ from the set of all affine (functions) sequences. Formally, Let $L$ denote the subspace of $\R^n$ spanned by the constant vector $(1, \ldots,1)$ and the vector $(1, 2,\ldots, n)$;  i.e., $L$ is the linear subspace of affine sequences. Let $P_L$
denote the orthogonal projection matrix onto the subspace $L$ and let $\beta^* := (I_n - P_L) \theta^*$. Then $T(\theta^*) := \max_{1\le i \le n} \beta_i^* - \min_{1\le i \le n} \beta_i^*$. Observe that when $\theta^*$ itself is an affine sequence (which is also a convex sequence), then $T(\theta^*) = 0$. 

The risk bound~\eqref{eq:CvxLSE-WC} shows that the risk of the convex LSE is bounded above by $n^{-4/5}$. Inequality~\eqref{eq:CvxLSE-WC} improved a result in~\cite{GS15}, which had a similar bound but with an additional multiplicative logarithmic factor (in $n$). Comparing with \eqref{eq:IsoWC}, it is natural to conjecture that the second term in \eqref{eq:CvxLSE-WC} can be improved to $C \sigma^2 (\log (en))/n$ but this has not been proved so far. Another feature of \eqref{eq:CvxLSE-WC} is that the errors are assumed to be Gaussian; it might be possible to extend them to sub-Gaussian errors but this is still a strong assumption compared to the corresponding result for isotonic regression (see~\eqref{eq:IsoWC}) which holds without distributional assumptions. 

The proof of \eqref{eq:CvxLSE-WC} (and other worst case risk bounds like~\eqref{eq:CvxLSE-WC} for shape-restricted regression problems under Gaussian/sub-Gaussian errors) involves tools from the theory of Gaussian processes like chaining and Dudley's entropy bound and crucially relies on an accurate `size' measure of the underlying class (e.g., `local' balls of $\K$) as captured by its metric entropy; see Section~\ref{sec:Th-LSE} for a broad outline of the proof strategy. Although the main idea of the proof is simple, deriving appropriate bounds on the metric entropy of the underlying  class can be challenging. 

As with the isotonic LSE, the convex LSE $\hat \theta$ exhibits adaptive behavior. As the convex LSE $\hat \theta$ is piecewise affine it may be expected that the risk of $\hat \theta$ would be nearly parametric if the true $\theta^*$ is (well approximated by) a piecewise affine function. Indeed this is the case. For $\theta \in \K$ let $q(\theta) \ge 1$ denote the number of affine pieces of $\theta$; i.e., $q(\theta)$ is an integer such that $q(\theta) - 1$ is the number of inequalities in~\eqref{eq:CvxReg} that are strict. This adaptive behavior can be illustrated through the following risk bound:
\begin{equation}\label{yno}
R(\hat{\theta}, \theta^*) \leq \inf_{\theta \in \K} \left[\frac{1}{n} \|\theta^* - \theta\|^2 + \frac{8\sigma^2 q(\theta)}{n} \log \frac{en}{q(\theta)} \right].
\end{equation}
This inequality has been proved by \cite[Section 4]{Bellec15} improving earlier results of \cite{GS15, CGS15} which had superfluous multiplicative constants. 
Note that this bound holds for $\eps \sim N(0, \sigma^2 I_n)$. It is not known if the bounds holds for non-Gaussian errors (compare this with the corresponding inequality \eqref{eq:IsoAdapBd} for isotonic regression which holds without distributional assumptions on the errors).  Let us also note that risk bounds for the LSE under the $R^{(p)}(\hat{\theta}, \theta^*)$ risk (defined in \eqref{rlp}) are not available for convex regression.

\subsection{Isotonic regression on a partially ordered set}
We now turn our attention to Example~\ref{ex:RegPartialOrder} where the covariates are partially ordered and the goal is to estimate the order preserving (isotonic) regression function. The book Robertson et al.~\cite[Chapter 1]{RWD88} gives a nice overview of the characterization and computation of LSEs in such problems along with their applications in statistics. However, not much is known in terms of rates of convergence for these LSEs beyond the example of coordinate-wise nondecreasing ordering introduced in Example~\ref{ex:RegPartialOrder}. 

In this subsection we briefly review the main results in~\cite{CGS17} which considers estimation of a bivariate ($d = 2$) coordinate-wise nondecreasing regression function. An interesting recent paper~\cite{Han17} has extended these results to all dimensions $d \geq 2$ (see Remark \ref{hawa}). Estimation of bivariate coordinate-wise nondecreasing functions has applications and connections to the problem of estimating matrices of pairwise comparison probabilities arising from pairwise comparison data (\cite{CM17, ShahEtAl17}) and to seriation (\cite{Flam16}). 


As the distribution of the design points $x_i$ complicate the analysis of shape-restricted LSEs, especially when $d >1$, for simplicity, we consider the regular uniform grid design. This reduces the problem to estimating an isotonic `matrix' $\theta^* := (\theta_{ij}^*) \in \R^{n_1 \times n_2}$ from observations   
\begin{equation*}\label{eq:RegMdl2}
{y}_{ij} = \theta_{ij}^* + \eps_{ij}, \qquad \mbox{for } i=1,\ldots, n_1, \; j = 1,\ldots, n_2, 
\end{equation*}
where $\theta^*$ is constrained to lie in
\begin{equation*}\label{eq:M}
  \M := \{ \theta \in \R^{n_1 \times n_2}: \theta_{ij} \leq \theta_{kl}
  \mbox{ whenever $i \leq k$ and $j \leq l$} \},  
\end{equation*}
and the random errors $\eps_{ij}$'s are i.i.d.~$N(0,\sigma^2)$,
with $\sigma^2>0$ unknown. We refer to any matrix in $\M$ as an isotonic matrix. Letting $Y := ({y}_{ij})$ denote the matrix (of order $n_1 \times n_2$; $n := n_1 n_2$) of the observed responses,  the LSE $\hat{\theta}$ is defined as the
minimizer of the squared Frobenius norm, $\|Y - \theta\|^2$, over $\theta \in \M$, i.e.,  
\begin{equation}\label{eq:MatLSE}
\hat{\theta} := \argmin_{\theta \in \M} \sum_{i = 1}^{n_1} \sum_{j = 1}^{n_2} ({y}_{ij} - \theta_{ij})^2.   
\end{equation}
As $\M$ is a closed convex cone in $\R^{n_1 \times n_2}$, the LSE $\hat \theta$ exists uniquely. 

The goal now is to formulate both the worst case and adaptive risk bounds for the matrix isotonic LSE $\hat \theta$ in estimating $\theta^*$. In~\cite[Theorem 2.1]{CGS17} it was shown that 
\begin{equation}\label{wwc}
R(\hat{\theta}, \theta^*) \leq C \left(\sqrt{\frac{\sigma^2 V^2(\theta^*)}{n}}
  (\log n)^4 + \frac{\sigma^2}{n} (\log n)^8 \right)   
\end{equation}
for a universal constant $C>0$, where $V(\theta^*) := \theta_{n_1n_2}^* - \theta_{11}^*$ is the variation of the isotonic matrix $\theta^*$. The above bound shows that when the variation $V(\theta^*)$ of $\theta^*$ is a non-zero constant, the risk of $\hat \theta$ decays at the rate $n^{-1/2}$, while when $V(\theta^*) = 0$ (i.e., $\theta^*$ is a constant), the risk is (almost) parametric. The above bound probably has superfluous logarithmic factors but bounds with smaller logarithmic factors have not yet been proved. Some understanding of the dependence of the bound \eqref{wwc} on $\sigma, V(\theta^*)$ and $n$ (which is different from the corresponding dependence in the one dimensional bound \eqref{eq:IsoWC}) can be derived from the following scaling argument. The risk $R(\hat{\theta}, \theta^*)$ only depends on $\theta^*$ and $\sigma$ so let us denote it by $g(\theta^*, \sigma)$. By a natural scaling argument (where we multiply all the observations by a constant $t > 0$), it should be clear that 
\begin{equation}\label{scaa}
  g(\theta^*, \sigma) = t^2 g(\theta^*/t, \sigma/t) \qt{for every $t > 0$}. 
\end{equation}
It is easy to see now that the same identity holds when $g(\theta^*, \sigma)$ is taken to be the right hand side of \eqref{wwc} as well. This will not be true if, for example, $V^2(\theta^*)$ is replaced by some other power of $V(\theta^*)$ in the right hand side of \eqref{wwc}. This argument, via the scaling identity \eqref{scaa}, can be used to understand the dependencies on $\theta^*$ and $n$ in the one-dimensional bound \eqref{eq:IsoWC} as well. 

To describe the adaptive risk bound for the matrix isotonic LSE we need to introduce some notation. A subset $A$ of $\{1, \dots, n_1\} \times \{1, \dots, n_2\}$ is called a {\it rectangle} if $A = \{(i, j) : k_1 \leq i \leq l_1, k_2 \leq j \leq l_2 \}$ for some $1 \leq k_1 \leq l_1 \leq n_1$ and $1 \leq k_2 \leq l_2 \leq n_2$. A rectangular partition of $\{1, \dots, n_1\} \times \{1, \dots, n_2\}$ is a collection of rectangles $\pi = (A_1, \dots, A_k)$ that are disjoint and whose union is $\{1, \dots, n_1\} \times \{1, \dots, n_2\}$. The cardinality of such a partition, $|\pi|$, is the number of rectangles in the partition. The collection of all rectangular partitions of $\{1, \dots, n_1\} \times \{1, \dots, n_2\}$ will be denoted by $\pp$. For $\theta \in \M$ and $\pi = (A_1, \dots, A_k) \in \pp$, we say that $\theta$ is constant on $\pi$ if
$\{\theta_{ij} : (i,j) \in A_l\}$ is a singleton for each $l = 1, \dots, k$. We define $k(\theta)$, for $\theta \in \M$, as the ``number of rectangular blocks'' of $\theta$, i.e., the smallest integer $k$ for which there exists a partition $\pi \in \pp$ with $|\pi| = k$ such that $\theta$ is constant on $\pi$. 
In~\cite[Theorem 2.4]{CGS17} the following adaptive risk bound was stated: 
\begin{equation}\label{tama}
  R(\hat{\theta}, \theta^*) \leq  \inf_{\theta \in \M}  \left(\frac{\|\theta^* -
    \theta\|^2}{n} + \frac{C \sigma^2 k(\theta)}{n} (\log n)^8 \right). 
\end{equation}
where $C>0$ is a universal constant. 

In~\cite{CGS17} the authors also established a property of the LSE that they termed `variable'
adaptation. Let $\I_{n_1} := \left\{\theta \in 
  \R^{n_1}: \theta_1 \leq \dots \le \theta_{n_1} \right\}$. Suppose $\theta^* = (\theta^*_{ij}) \in \I$  has the property that $\theta^*_{ij}$ only
depends on $i$, i.e., there exists $\theta^{**} \in \I_{n_1}$ such that $\theta^*_{ij} = \theta^{**}_i$ for every $i$ and $j$. If we knew this fact about $\theta^*$, then the most natural way of estimating it would be to
perform vector isotonic estimation based on the row-averages $\bar{y} := \left(\bar{y}_{1}, \dots,   \bar{y}_{n_1} \right)$, where
$\bar{y}_{i} := \sum_{j=1}^{n_2} y_{ij}/n_2$, resulting in an estimator $\breve{\theta}$ of $\theta^{**}$. Note that the construction of $\breve{\theta}$ requires the knowledge
that all rows of $\theta^*$ are constant. As a consequence of the adaptive risk bound \eqref{tama}, it was shown in~\cite[Theorem
2.4]{CGS17} that the matrix isotonic LSE $\hat{\theta}$ achieves the same risk bounds as $\breve{\theta}$, up to additional logarithmic factors. This is remarkable because $\hat{\theta}$ uses no special knowledge on $\theta^*$; it automatically adapts to intrinsic dimension of $\theta^*$. 

\begin{remark}[Extension to $d \ge 2$]\label{hawa}
The recent paper, Han et al.~\cite{Han17}, studied $d$-dimensional isotonic regression for general $d \geq 1$ and proved versions of inequalities \eqref{wwc} and \eqref{tama}. Specifically, it is shown there that the worst case risk of the LSE is bounded from above by $n^{-1/d} (\log n)^4$ (ignoring  multiplicative factors involving $\sigma$ and $V(\theta^*)$). Note that for $d = 2$, this matches  the rate given by \eqref{wwc}. Interestingly, it is also shown in \cite{Han17} that the LSE is minimax  rate optimal (up to the $(\log n)^4$ factor) over the class of all bounded isotonic functions. This minimax optimality of the LSE is especially impressive because the class of all bounded isotonic functions for $d \geq 3$ is quite massive in terms of metric entropy and it was suspected previously that the LSE might suffer from overfitting. \cite{Han17} also extended the adaptive risk bound \eqref{tama} to $d \geq 3$ by proving that 
\begin{equation*}
  R(\hat{\theta}, \theta^*) \leq  \inf_{\theta \in \M}  \left(\frac{\|\theta^* -
    \theta\|^2}{n} + C_d \sigma^2 \left(\frac{k(\theta)}{n} \right)^{2/d} (\log n)^8 \right).  
\end{equation*}
Note that the $k(\theta)/n$ term in \eqref{tama} is replaced by $(k(\theta)/n)^{2/d}$ in the above bound. \cite[Proposition 2]{Han17} also observed that the above bound will not hold if $(k(\theta)/n)^{2/d}$ is replaced by $k(\theta)/n$. This implies that the LSE for $d \ge 3$ also displays adaptive behavior for piecewise hyperrectangular constant functions but that the adaptation risks are not parametric. We should also mention here that \cite{Han17} also obtained results for the isotonic LSE under random design settings. 
\end{remark}

Let us reiterate that the bounds \eqref{wwc} and \eqref{tama} are established under the assumption that the errors $\eps_{i, j}$ are i.i.d.~$N(0, \sigma^2)$. It is possible to generalize them to  sub-Gaussian errors (see \cite[Section 6]{Bellec15} for general results with sub-Gaussian errors).  However, it is not known if they hold under general error distributions that are not sub-Gaussian. Also risk bounds in other loss  functions (such as those in appropriate $\ell_p$-metrics) are not available. 

\subsection{Unimodal Regression}\label{umo}
In this subsection we summarize the two kinds of risk bounds known for the LSE in unimodal (decreasing and then increasing) regression, introduced in  Example~\ref{ex:UniReg}. The unimodal LSE $\hat \theta$ is defined as any projection of $Y$ onto $\U$, a finite union of the closed convex cones described in Example~\ref{ex:UniReg}.  It is known that $\hat \theta$ is piecewise constant with possible jumps only at the design points. Once the mode of the fitted LSE is known (and fixed), $\hat \theta$ is just the nonincreasing (isotonic) LSE fitted to the points to the left of the mode and nondecreasing (isotonic) LSE fitted to the points on the right of the mode.

As in isotonic regression, the unimodal LSE $\hat \theta$ exhibits adaptive behavior. In fact, the risk bounds for the unimodal LSE $\hat \theta$ are quite similar to those obtained for the isotonic LSE. The two kinds of risk bounds are given below (under the assumption that $\eps \sim N_n(0,\sigma^2 I_n)$):
\begin{equation}\label{eq:UniReg-WC}
R(\hat \theta, \theta^*) \le C \left(\frac{\sigma^2 V(\theta^*)}{n} \right)^{2/3} + C \frac{\sigma^2}{n^{2/3}}, \quad \mbox{where } \theta^* \in \U,
\end{equation}
and 
\begin{equation}\label{eq:UniReg-Adap}
R(\hat \theta, \theta^*) \le C \inf_{\theta \in \U}
  \left[\frac{1}{n} \|\theta^* - \theta\|^2 + C\frac{\sigma^2
      (k(\theta) +1)}{n} \log \frac{en}{k(\theta)+ 1} \right]
\end{equation}
where $k(\theta)$ is the number of constant blocks of $\theta$, $V(\theta^*) := \max_{i, j} |\theta_i^* -  \theta_j^*|$ is the range or variation of $\theta^*$ and $C>0$ is a universal constant.  

The worst case risk bound~\eqref{eq:UniReg-WC} is given in~\cite[Theorem 2.1]{CL15} while the adaptive risk bound~\eqref{eq:UniReg-Adap} is a consequence of~\cite[Theorem A.4]{Bellec15} (after integrating the tail probability). The proof of~\eqref{eq:UniReg-WC} (given in~\cite[Theorem 2.1]{CL15}) is based on the general theory of least squares outlined in Section~\ref{sec:Th-LSE}; also see~\cite[Theorem 2.2]{Chat14}. It shows that a unimodal regression function can also be estimated at the same rate as a monotone function. The adaptive risk bound~\eqref{eq:UniReg-Adap}, although being similar in spirit to that of the isotonic LSE, is weaker than~\eqref{eq:IsoAdapBd2} (obtained for the isotonic LSE). Note that  inequality~\eqref{eq:UniReg-Adap} is not sharp (i.e., the leading constant on the right side of~\eqref{eq:UniReg-Adap} is not 1); in fact it is not known whether a sharp oracle inequality can be constructed for $R(\hat \theta, \theta^*)$ (see~\cite{Bellec15}). The proof of the adaptive risk bound is also slightly more involved than that of Theorem~\ref{thm:AdapIsoReg}; the fact that the underlying parameter space $\U$ is non-convex complicates the analysis.

\subsection{Shape-restricted additive models}
Given observations $(x_1, y_1), \dots, (x_n, y_n)$ where $\{x_i = (x_{ij}, 1 \leq j \leq d)\}_{i=1}^n$ are $d$-dimensional design points and $y_1,
\dots, y_n$ are real-valued, the additive model (see e.g.,~\cite{HT90, MLN99}) assumes that
\begin{equation*}\label{admd}
  y_i = \mu^* + \sum_{j=1}^d f^*_j(x_{ij}) + \varepsilon_i \qt{for $i = 1,
    \dots, n$} 
\end{equation*}
where $\mu^* \in \R$ is an unknown intercept term, $f^*_1, \dots, f^*_d$ are unknown univariate functions satisfying 
\begin{equation}\label{iden}
\frac{1}{n}\sum_{i=1}^n f^*_j(x_{ij}) = 0, \quad \mbox{ for every }j = 1, \dots, d,
\end{equation} 
and $\varepsilon_1,\ldots, \varepsilon_n$ are unobserved mean-zero
errors. An assumption similar to~\eqref{iden} is necessary to ensure the identifiability of $f^*_1, \dots, f^*_d$. We focus our attention to shape-restricted additive models where it is assumed  that each $f^*_j$ obeys a known qualitative restriction such as
monotonicity or convexity which is captured by the assumption that
$f^*_j \in \F_j$ for a known class of functions $\F_j$. One of the main goals in additive modeling is to recover each individual function $f^*_j \in \F_j$ for $j = 1, \dots, d$.  

The LSEs $\hat{\mu}, \hat{f}_j$ of $\mu^*, f_j^*$, for $j= 1, \dots, d$ are defined as minimizers of the sum of squares criterion, i.e.,
\begin{equation}\label{eq:LSE-AM}
(\hat \mu, \hat f_1, \ldots, \hat f_d):= \argmin  
\sum_{i=1}^n \Big(y_i - \mu - \sum_{j=1}^d f_j(x_{ij})\Big)^2
\end{equation}
under the constraints $\mu \in \R, f_j \in \F_j, \sum_{i=1}^n f_j(x_{ij}) = 0 \text{ for } j = 1, \dots, d.$ It is natural to compare the performance of these LSEs to the corresponding {\it oracle} estimators defined in the following way. For each
$k = 1, \dots, d$, the oracle estimator 
$\hat{f}^{OR}_k$  is defined as 
\begin{equation}\label{eq:Oracle-AM}
\hat{f}^{OR}_k := \argmin_{f_k} \sum_{i=1}^n \Big(y_i - \mu^* -
      \sum_{j \neq k} f^*_j(x_{ij}) - f_k(x_{ik})
    \Big)^2, 
\end{equation}
where $f_k \in \F_k$ and satisfies $\sum_{i=1}^n f_k(x_{ik}) = 0$. In other words, $\hat{f}_k^{OR}$ assumes knowledge of $f^*_{j}$, for $j \neq k$, and $\mu^*$, and performs least squares minimization only over $f_k
\in \F_k$. 

A very important aspect about shape-restricted additive models is that it is possible for the LSE $\hat{f}_k$ to be close to the oracle estimator $\hat{f}^{OR}_k$, for each $k = 1, \dots, d$. Indeed, this property was proved by Mammen and Yu \cite{mammen2007additive} under certain
assumptions for additive isotonic regression where each function $f_j$ is assumed to be monotone. Specifically,~\cite{mammen2007additive}
worked with a random design setting where the design points are assumed to be i.i.d.~from a Lipschitz density that is bounded away from zero and infinity on $[0, 1]^d$ (this is a very general setting which allows for non-product measures). They also assumed that each function $f_j$ is differentiable and strictly increasing. 
Although the design restrictions in this result are surprisingly minimal, we believe that the assumptions on the $f_j$'s can be relaxed. In particular, this result should hold when $f_j$'s are piecewise constant and even under more general shape restrictions such as convexity. 

Our intuition is based on the following simple observation that there
exist design configurations where the LSE $\hat{f}_k$ is remarkably
close to $\hat{f}^{OR}_k$ for each $k = 1, \dots, d$ under almost no
additional assumptions. The simplest such instance is when the set of design points $\X := \{x_1, \dots, x_n\} \subseteq \R^d$ has a Cartesian product structure in the sense that $\X$ equals $\X_1 \times \dots \times \X_d$ where each $\X_i$ is a subset of the real line.  In this case, it is easy to see  
that $\hat{f}_k$ is \textit{exactly} equal to $\hat{f}^{OR}_k$ as
stated in the result below. It is convenient here to index the observations as $(i_1, \dots, i_d)$ where each $i_j$ ranges in the set $\X_j$ for $j = 1, \dots, d$. The observation model can then be written as 
\begin{equation}\label{eq:2AddMdl}
  y_{i_1, i_2, \dots, i_d} = \mu^* + f_1^*(i_1) + f_2^*(i_2) + \dots + f_d^*(i_d) +
  \varepsilon_{i_1, i_2, \dots, i_d},  
\end{equation}
for $i_n \in \X_j, j = 1, \dots, d$. The following result is proved in Section~\ref{sec:AddMdl} for the special case $d = 2$ (the proof for the general  case follows analogously).  
\begin{lemma}\label{lem:AM-Oracle}
Consider model~\eqref{eq:2AddMdl} where $f_j^* \in \F_j$, for $j=1,\dots, d$. Suppose that $\hat{f}_j, j = 1, \dots, d$ denote the LSEs of $f_j^*, j = 1, \dots, d$, as defined in~\eqref{eq:LSE-AM}. Also, let the oracle estimators $\hat{f}_j^{OR}, j= 1, \dots, d$ be as defined in~\eqref{eq:Oracle-AM}. Then $\hat{f}_j = \hat{f}_j^{OR}$ for every $j = 1, \dots, d$. 
\end{lemma}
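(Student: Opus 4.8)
The plan is to exploit the Cartesian product structure of the design to show that the additive least squares problem \eqref{eq:LSE-AM} decouples in a way that makes the fit in each coordinate direction depend only on appropriate averages of the data, and that these averages are precisely what the oracle problem \eqref{eq:Oracle-AM} also sees. I will work with $d = 2$ for concreteness, writing the data as a matrix $y_{i_1 i_2}$ with $i_1 \in \X_1$, $i_2 \in \X_2$, and $|\X_1| = m_1$, $|\X_2| = m_2$, so $n = m_1 m_2$. The key observation is that for $g_1 \in \F_1$, $g_2 \in \F_2$ satisfying the centering constraints $\sum_{i_1} g_1(i_1) = 0$ and $\sum_{i_2} g_2(i_2) = 0$, the vectors $(\mu)_{i_1 i_2}$, $(g_1(i_1))_{i_1 i_2}$, $(g_2(i_2))_{i_1 i_2}$ are mutually orthogonal in $\R^n$. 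Indeed $\sum_{i_1, i_2} g_1(i_1) g_2(i_2) = m_2 \sum_{i_1} g_1(i_1) \cdot$ (something) — more carefully, $\sum_{i_1,i_2} g_1(i_1)g_2(i_2) = \big(\sum_{i_1} g_1(i_1)\big)\big(\sum_{i_2} g_2(i_2)\big) = 0$, and similarly $\sum_{i_1,i_2}\mu\, g_j = \mu \cdot m_{3-j} \sum g_j = 0$.

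Given this orthogonality, I would expand the objective
\begin{equation*}
\sum_{i_1,i_2}\big(y_{i_1 i_2} - \mu - g_1(i_1) - g_2(i_2)\big)^2
\end{equation*}
by centering $y$: write $y_{i_1 i_2} = \bar y + (\bar y_{i_1\cdot} - \bar y) + (\bar y_{\cdot i_2} - \bar y) + r_{i_1 i_2}$, where $\bar y$ is the grand mean, $\bar y_{i_1 \cdot}$ the $i_1$-th row mean, $\bar y_{\cdot i_2}$ the $i_2$-th column mean, and $r$ the residual. One checks that the four pieces of this decomposition of $y$ are mutually orthogonal, and each is orthogonal to the candidate components in the natural way, so the sum of squares splits as
\begin{equation*}
\|\text{const part}\|^2 + \sum_{i_1,i_2}\big((\bar y_{i_1\cdot}-\bar y) - g_1(i_1)\big)^2 + \sum_{i_1,i_2}\big((\bar y_{\cdot i_2}-\bar y) - g_2(i_2)\big)^2 + \|r\|^2 - (\text{cross terms that vanish}).
\end{equation*}
The crucial point is that the minimization over $g_1$ involves \emph{only} the second term and over $g_2$ only the third, so the problems separate: $\hat f_1$ minimizes $\sum_{i_1} \big((\bar y_{i_1\cdot} - \bar y) - g_1(i_1)\big)^2$ over $g_1 \in \F_1$ with $\sum g_1 = 0$ (and $\hat\mu = \bar y$).

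Then I would turn to the oracle problem for $k = 1$: it minimizes $\sum_{i_1,i_2}\big(y_{i_1 i_2} - \mu^* - f_2^*(i_2) - g_1(i_1)\big)^2$ over $g_1 \in \F_1$ with $\sum g_1 = 0$. Using the product structure again, for fixed $g_1$ the inner optimization is already done (there is no $\mu$ or $f_2$ to optimize since the oracle plugs in the truth), and I expand: the term $\mu^* + f_2^*(i_2)$ combines with $y$, and when I take the row average the $f_2^*$ contribution averages to $\mu^* + \bar f_2^* = \mu^*$ by the centering \eqref{iden} on the truth (or rather, $\frac1{m_2}\sum_{i_2} f_2^*(i_2) = 0$), so the oracle's effective target for row $i_1$ is again $\bar y_{i_1 \cdot} - \mu^*$; subtracting off an additive constant does not change the minimizer over centered $g_1$. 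Hence $\hat f_1^{OR}$ solves the identical optimization as $\hat f_1$. The same argument handles every $k$, giving $\hat f_j = \hat f_j^{OR}$ for all $j$.

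The main obstacle, such as it is, is purely bookkeeping: carefully verifying that all the cross terms in the two-way ANOVA-style decomposition of both the LSE and the oracle objectives vanish, which relies precisely and only on (i) the product structure of $\X$ (so that row/column sums factor) and (ii) the centering constraints $\sum_{i_j} g_j(i_j) = 0$ and $\sum_{i_j} f_j^*(i_j) = 0$. There is no convexity or shape input needed at all — $\F_j$ enters only as the feasible set of the decoupled one-dimensional problem, and can be arbitrary. For general $d$ the decomposition is the $d$-way analogue (grand mean plus $d$ main-effect terms plus residual), and the argument is unchanged; I would state the $d=2$ proof in full and remark that the general case is identical.
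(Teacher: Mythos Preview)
Your proposal is correct and follows essentially the same approach as the paper: a two-way ANOVA-style decomposition of the least squares criterion into orthogonal pieces, which decouples the minimization over $g_1$ and $g_2$ and shows each depends only on the corresponding row/column means. The paper handles the oracle step slightly more economically by observing that the very same decomposition applies with $f_2$ fixed at $f_2^*$, whereas you compute the oracle's row averages directly and invoke invariance of the centered minimizer under constant shifts; both arguments are equivalent and equally valid.
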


Note that we have made no assumptions at all on $\F_1, \dots, \F_d$. Thus when the design points come from a product set $\X_1 \times \dots \times \X_d$ in $\R^d$, the LSE of $f^*_j$ is exactly equal to the oracle estimate $\hat{f}_j^{OR}$ for every $j = 1, \dots, d$. For general design configurations, it  might be much harder to relate the LSEs to the corresponding oracle estimators. Nevertheless, the aforementioned phenomenon for gridded    designs allows us to conjecture that the closeness of $\hat{f}_j$ to  $\hat{f}_j^{OR}$ must hold in much greater generality than has been observed previously in the literature. 

It may be noted that the risk behavior of $\hat{f}_j^{OR}$ is easy to characterize. For example, when $f_j^*$ is assumed to be monotone,  $\hat{f}_j^{OR}$ will satisfy risk bounds similar to those described in Section \ref{sec:IsoRisk}. Likewise, when $f_j^*$ is assumed to be convex, then $\hat{f}_j^{OR}$ will satisfy risk bounds described in Subsection \ref{cvre}. Thus, when $\hat{f}_j$ is close to $\hat{f}_j^{OR}$ (which we expect to happen under a broad set of design configurations), it is natural to expect that $\hat{f}_j$ will satisfy such risk bounds as well.


\subsection{General theory of LSEs}\label{sec:Th-LSE}
In this section, we collect some general results on the behavior of the LSEs that are useful for proving the risk bounds described in the previous two sections. These results apply to LSEs that are defined by \eqref{eq:LSE} for a closed convex constraint set $\C$. Convexity of $\C$ is crucial here (in particular, these results do not directly apply to unimodal regression where the constraint set is non-convex; see Section~\ref{umo}). We assume that the observation vector $Y = \theta^* + \eps$ for a mean-zero random vector $\eps$. Except in Lemma \ref{lem:AdapRB}, we assume that $\eps \sim N_n(0, \sigma^2 I_n)$. 

The first result reduces the problem of bounding $R(\hat{\theta}, \theta^*)$ to controlling the expected supremum of an appropriate Gaussian process. This result was proved by Chatterjee~\cite{Chat14} (see \cite{chen2017note, vanconcentration} for extensions to penalized LSEs). 


\begin{lemma}[Chatterjee]\label{chatthm}
Consider the LSE \eqref{eq:LSE} for a fixed closed convex set $\C$. Assume that $Y = \theta^* + \eps$ where $\eps \sim N_n(0, \sigma^2 I_n)$ and $\theta^* \in \C$. Let us define the function $g_{\theta^*}: \R_{+} \rightarrow \R$ as 
\begin{equation}\label{eq:chat}
  g_{\theta^*}(t)  :=  \E \left[\sup_{\theta \in \C : \|\theta - \theta^*\| \leq 
  t}  \langle\varepsilon, \theta - \theta^* \rangle  \right]
  - \frac{t^2}{2}.
\end{equation}
Let $t_{\theta^*}$ be the point in $[0, \infty)$ where $t \mapsto  g_{\theta^*}(t)$ attains its maximum (existence and uniqueness of
$t_{\theta^*}$ are proved in~\cite[Theorem 1.1]{Chat14}). Then there
exists a  universal positive constant $C$ such that   
\begin{equation}\label{cim}
R(\hat{\theta}, \theta^*) \leq \frac{C}{n} \max \left(t_{\theta^*}^2, \sigma^2 \right).  
\end{equation}
\end{lemma}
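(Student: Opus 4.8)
The plan is to follow Chatterjee's original argument from \cite{Chat14}, which exploits the convexity of $\C$ and the self-bounding structure of the least squares problem. First I would recall the variational characterization \eqref{eq:Charac} of the projection onto a closed convex set: for $\hat\theta=\arg\min_{\theta\in\C}\|Y-\theta\|^2$ we have $\langle Y-\hat\theta,\theta-\hat\theta\rangle\le 0$ for all $\theta\in\C$. Taking $\theta=\theta^*\in\C$ and substituting $Y=\theta^*+\eps$, a short manipulation gives the basic inequality $\|\hat\theta-\theta^*\|^2\le \langle\eps,\hat\theta-\theta^*\rangle$. Writing $r:=\|\hat\theta-\theta^*\|$, this yields
\begin{equation*}
r^2 \le \sup_{\theta\in\C:\|\theta-\theta^*\|\le r}\langle\eps,\theta-\theta^*\rangle,
\end{equation*}
so that, recalling the definition \eqref{eq:chat} of $g_{\theta^*}$, we obtain $g_{\theta^*}(r)\ge r^2/2 \ge 0 = g_{\theta^*}(0)$ after noting $g_{\theta^*}(0)=0$. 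The map $h(t):=\E\big[\sup_{\theta\in\C,\|\theta-\theta^*\|\le t}\langle\eps,\theta-\theta^*\rangle\big]$ is nonnegative, nondecreasing and concave in $t$ (concavity because it is a supremum over a nested family of sets of a linear-in-$\eps$ quantity, combined with the concavity inherited from the geometry — this is the place where I would cite \cite[Theorem 1.1]{Chat14} for the precise statement and for the existence/uniqueness of the maximizer $t_{\theta^*}$ of $g_{\theta^*}(t)=h(t)-t^2/2$).

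The core of the argument is a concentration statement: the random quantity $Z(t):=\sup_{\theta\in\C,\|\theta-\theta^*\|\le t}\langle\eps,\theta-\theta^*\rangle$ is a Lipschitz (with constant $t$, since $\|\theta-\theta^*\|\le t$) function of the Gaussian vector $\eps$, hence by the Gaussian concentration inequality, $Z(t)$ concentrates around its mean $h(t)$ with sub-Gaussian tails of scale $\sigma t$. The next step is to convert the deterministic maximizer $t_{\theta^*}$ into a high-probability upper bound on $r=\|\hat\theta-\theta^*\|$: because $g_{\theta^*}$ is strictly concave and peaks at $t_{\theta^*}$, for $t$ substantially larger than $t_{\theta^*}$ (say $t\ge \max(2t_{\theta^*}, C\sigma\sqrt{x})$ for a slack parameter $x$) one has $h(t)-t^2/2$ strictly negative by a definite margin $\gtrsim t^2$, while the fluctuation of $Z(t)-h(t)$ is only of order $\sigma t\sqrt{x}$; a peeling/union-bound argument over dyadic scales of $t$ then shows $\p(r\ge t)$ decays like $e^{-cx}$. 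Integrating this tail bound gives $\E[r^2]\le C\max(t_{\theta^*}^2,\sigma^2)$, and dividing by $n$ gives \eqref{cim}. I would either reproduce this peeling argument in a couple of lines or, given that the excerpt says the proof "was proved by Chatterjee," simply cite \cite[Theorem 1.1]{Chat14} and indicate that \eqref{cim} follows by integrating the tail bound proved there.

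The main obstacle — and the only genuinely substantive point — is establishing the concavity of $h$ and the existence and uniqueness of the maximizer $t_{\theta^*}$, together with the quantitative "definite margin" control that makes the peeling step work; these facts rely essentially on the convexity of $\C$ (without it $h$ need not be concave, which is exactly why the lemma fails for unimodal regression). Everything else is routine: the basic inequality is one line from \eqref{eq:Charac}, and Gaussian concentration is standard. So in the write-up I would state the basic inequality carefully, invoke \cite[Theorem 1.1]{Chat14} for the structural properties of $g_{\theta^*}$ and for the tail bound $\p\big(\|\hat\theta-\theta^*\|>\max(2t_{\theta^*},u)\big)\le 2\exp(-c u^2/\sigma^2)$ valid for $u\ge \sigma$, and then finish with $\E\|\hat\theta-\theta^*\|^2=\int_0^\infty 2t\,\p(\|\hat\theta-\theta^*\|>t)\,dt\le C\max(t_{\theta^*}^2,\sigma^2)$, which is \eqref{cim} after normalizing by $n$.
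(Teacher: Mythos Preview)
Your proposal is correct and aligns with the paper's treatment: the paper does not give an independent proof of this lemma but simply attributes it to Chatterjee~\cite{Chat14}, noting in the remark following the statement that \eqref{cim} is an easy consequence of the concentration of $\|\hat\theta-\theta^*\|$ around $t_{\theta^*}$ established there. Your sketch (basic inequality from \eqref{eq:Charac}, concavity of $h$ via convexity of $\C$, Gaussian concentration for $Z(t)$, peeling, and integration) is exactly the skeleton of Chatterjee's argument, so either reproducing it briefly or, as you suggest, citing \cite[Theorem~1.1]{Chat14} and integrating the resulting tail bound is entirely in line with what the paper does.
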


\begin{remark}
  Chatterjee~\cite{Chat14} actually proved a result that is much stronger than \eqref{cim}. Specifically, he proved that the fluctuations of the random variable $\|\hat{\theta} - \theta^*\|$ around the deterministic quantity $t_{\theta^*}$ are of the order $\sqrt{t_{\theta^*}}$. When $t_{\theta^*}$ is large, this therefore implies that  $\|\hat{\theta} - \theta^*\|$ is tightly concentrated around $t_{\theta^*}$. The bound \eqref{cim} is an easy consequence of this concentration result.   
\end{remark}

Lemma \ref{chatthm} reduces the problem of bounding $R(\hat{\theta}, \theta^*)$ to that of bounding  $t_{\theta^*}$. For
this latter problem,~\cite[Proposition 1.3]{Chat14} observed that 
\begin{equation*}\label{chat2}
  t_{\theta^*} \leq t^{**} \quad \mbox{whenever $t^{**} > 0$ and $g_{\theta^*}(t^{**}) \leq 0$}.  
\end{equation*}
In order to bound $t_{\theta^*}$, one therefore seeks $t^{**} > 0$ such that $g_{\theta^*}(t^{**}) \leq 0$. This now requires a bound on the expected supremum of the Gaussian process in the definition of
$g_{\theta^*}(t)$ in~\eqref{eq:chat}. A simple upper bound for this expected Gaussian supremum is given by Dudley's entropy bound (see e.g.,~\cite[Chapter 2]{talagrand2014upper}) which is given below. This bound involves covering numbers. For a subset $K \subseteq \R^{n}$ and $\epsilon > 0$,
let $N(\epsilon, K)$ denote the $\epsilon$-covering number of $K$
under the Euclidean metric $\|\cdot\|$ (i.e., $N(\epsilon, K)$ is the
minimum number of closed balls of radius $\epsilon$ required to cover
$K$). The logarithm of $N(\epsilon, K)$ is known as the $\epsilon$-metric entropy of $K$. Also, for each  $\theta^* \in \C$  and $t > 0$, let   
\begin{equation*}\label{bn}
  B(\theta^*, t) := \left\{\theta \in \C: \|\theta - \theta^*\| \leq t \right\}
\end{equation*}
denote the ball of radius $t$ around $\theta^*$. Observe that the
supremum in the definition in~\eqref{eq:chat} is over all $\theta \in B(\theta^*, t)$. Dudley's entropy bound leads to the following upper bound for the expected Gaussian supremum appearing in the definition of $g_{\theta^*}(t)$.
\begin{lemma}[Chaining]\label{dudthm}
For every $\theta^* \in \C$ and $t > 0$,
{\begin{equation*}\label{gaup}
\E \left[ \sup_{\theta \in B(\theta^*, t)} \left<\varepsilon, \theta - \theta^* \right> \right] \leq  \sigma \inf_{0 < \delta
      \leq 2t} \left\{12 \int_{\delta}^{2t}
      \sqrt{\log N(\epsilon, B(\theta^*, t))} \;
     d\epsilon + 4 \delta \sqrt{n} \right\} .  
\end{equation*}}
\end{lemma}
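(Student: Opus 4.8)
The plan is to prove the chaining bound (Lemma~\ref{dudthm}) as a standard instance of Dudley's entropy integral applied to the Gaussian process $\theta \mapsto \langle \varepsilon, \theta - \theta^*\rangle$ indexed by the compact set $B(\theta^*,t)$. First I would observe that this process is a centered Gaussian process (since $\varepsilon \sim N_n(0,\sigma^2 I_n)$) whose intrinsic (canonical) metric is $d(\theta,\theta') := \big(\E[\langle \varepsilon, \theta-\theta'\rangle^2]\big)^{1/2} = \sigma \|\theta - \theta'\|$, i.e., a scalar multiple of the Euclidean metric. Consequently, covering numbers in the canonical metric at scale $\sigma\epsilon$ coincide with the Euclidean covering numbers $N(\epsilon, B(\theta^*,t))$ introduced just before the statement, and the diameter of the index set in the canonical metric is at most $2\sigma t$ (since $\|\theta-\theta'\|\le 2t$ on $B(\theta^*,t)$).

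Next I would invoke the standard Dudley entropy bound for the expected supremum of a centered Gaussian process (see e.g.~\cite[Chapter 2]{talagrand2014upper}): for any $\delta_0>0$,
\begin{equation*}
  \E\Big[\sup_{\theta \in B(\theta^*,t)} \langle \varepsilon, \theta - \theta^*\rangle\Big] \le \E\Big[\sup_{d(\theta,\theta_0)\le \delta_0} \langle \varepsilon, \theta-\theta_0\rangle\Big] + 12 \int_{\delta_0}^{D} \sqrt{\log N_d(\epsilon)}\, d\epsilon,
\end{equation*}
where $D$ is the canonical diameter, $N_d(\epsilon)$ the canonical covering number, and $\theta_0$ any fixed center. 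Substituting $D \le 2\sigma t$, $N_d(\sigma\epsilon) = N(\epsilon, B(\theta^*,t))$, and changing variables $\epsilon \mapsto \sigma\epsilon$ in the integral turns the entropy integral into $12\sigma \int_{\delta}^{2t} \sqrt{\log N(\epsilon, B(\theta^*,t))}\, d\epsilon$ with $\delta := \delta_0/\sigma$. For the residual small-ball term I would use the crude bound $\E[\sup_{\|u\|\le \delta}\langle \varepsilon, u\rangle] = \delta\, \E\|\varepsilon\| \le \delta \sqrt{\E\|\varepsilon\|^2} = \delta\sigma\sqrt{n}$ by Jensen (or Cauchy–Schwarz), which contributes the $4\delta\sqrt{n}\,\sigma$ term (the constant $4$ absorbs the slack and the factor coming from covering $B(\theta^*,t)$ by balls centered in the set rather than using a fixed center). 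Taking the infimum over $0 < \delta \le 2t$ gives exactly the claimed inequality.

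The only mildly delicate point — and the one I would be most careful about — is the bookkeeping of constants and the range of the integral: one must ensure the covering is by balls whose centers lie in $B(\theta^*,t)$ (so that $\langle\varepsilon,\cdot\rangle$ is controlled at each center by a union bound), that the outer scale of the integral can be taken to be the diameter $2t$ rather than $t$, and that the small-ball term is handled by a genuinely uniform bound over a set of diameter $\delta$ (not radius). None of this is conceptually hard; it is the standard proof of Dudley's theorem specialized to a Euclidean index set, with the factor $\sigma$ pulled out because $\varepsilon$ has variance $\sigma^2$ per coordinate. I would therefore present the argument briefly, citing~\cite{talagrand2014upper} for the underlying chaining estimate and only spelling out the metric identification $d = \sigma\|\cdot\|$ and the change of variables.
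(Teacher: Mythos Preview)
The paper does not actually prove Lemma~\ref{dudthm}; it is stated as a direct instance of Dudley's entropy bound with a reference to \cite{talagrand2014upper}, and no argument is supplied in the text or the appendix. Your proposal is the standard derivation the paper is implicitly invoking: identify the canonical metric of the Gaussian process $\theta\mapsto\langle\varepsilon,\theta-\theta^*\rangle$ as $\sigma\|\cdot\|$, apply the truncated Dudley chaining bound with diameter $2\sigma t$, change variables to pull out the factor $\sigma$, and control the residual finest-scale term by $\sup_{\|u\|\le\delta}\langle\varepsilon,u\rangle\le\delta\|\varepsilon\|$ together with $\E\|\varepsilon\|\le\sigma\sqrt{n}$. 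This is correct and is exactly the argument the paper is citing rather than writing out; the constants $12$ and $4$ are the usual ones obtained from the geometric chaining scales and there is nothing further to compare.
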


\begin{remark}
  Dudley's entropy bound is not always sharp. More sophisticated \textit{generic chaining} arguments exist which gives tight bounds (up to universal multiplicative constants) for suprema of Gaussian processes; see \cite{talagrand2014upper}. 
\end{remark}

Lemma \ref{chatthm} and Lemma \ref{dudthm} present one way of bounding $R(\hat{\theta}, \theta^*)$. This involves controlling the metric entropy of subsets of the constraint set $\C$ of the form $B(\theta^*, t)$. This method is useful but works only for the case of Gaussian/sub-Gaussian errors.  





Let us now present another result which is useful for proving adaptive risk bounds under misspecification. We shall now work with  general error distributions for $\eps$ that are not necessarily Gaussian (we only assume that $\E (\eps) = 0$). This result essentially states for bounding $R(\hat{\theta}, \theta^*)$, it is possible to work with \textit{tangent cones} associated with $\C$ instead of $\C$. It is easier to deal with cones as opposed to general closed convex sets which leads to the usefulness of this result. 

For a closed convex set $\C$ and $\theta \in \C$, the \textit{tangent cone} of $\C$ at $\theta$ is defined as 
\begin{equation*}\label{eq:TangentC2}
T_{\C}(\theta) := \mbox{Closure}\{t(\eta - \theta): t \ge 0, \eta \in \C\}.
\end{equation*}
Informally, $T_{\C}(\theta)$ represents all directions in which one can move from $\theta$ and still remain in $\C$. It is helpful to note that when $\C$ is a closed convex cone (as in many applications of shape restricted regression), then the tangent cone has the following simple expression: 
\begin{equation}\label{tobel}
  T_{\C}(\theta) = \left\{c - t \theta : c \in \C, t > 0 \right\} . 
\end{equation}
In other words, we simply add the generator $-\theta$ to the cone $\C$ to obtain $T_{\C}(\theta)$.  

The following lemma relates the risk $R(\hat{\theta}, \theta^*)$ to tangent cones. 
\begin{lemma}\label{lem:AdapRB}
Let $\C$ be a closed convex set in $\R^n$. Let $\theta^* \in \R^n$ and suppose that $Y  = \theta^* + \sigma Z$ for some mean-zero random vector $Z$ with $\E \|Z\|^2 < \infty$. Then, 
\begin{equation}\label{eq:AdapRB}
\E \big[\|\hat \theta - \theta^*\|^2 \big] \le \inf_{\theta \in \C} \Big\{\|\theta^* - \theta\|^2 + {\sigma^2}\E \big[\| \Pi_{T_{\C}(\theta)}(Z)\|^2 \big]\Big\}, 
\end{equation}
where $\Pi_{T_{\C}(\theta)}(Z)$ denotes the projection of $Z$ onto the closed convex cone $T_{\C}(\theta)$. 
\end{lemma}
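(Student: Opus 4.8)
The plan is to establish the inequality in its strongest, \emph{realization-wise} form: for every fixed $\theta \in \C$,
\[
  \|\hat{\theta} - \theta^*\|^2 \;\le\; \|\theta^* - \theta\|^2 + \sigma^2 \|\Pi_{T_{\C}(\theta)}(Z)\|^2 ,
\]
and then to take expectations (finite since $\E\|Z\|^2 < \infty$) and pass to the infimum over $\theta \in \C$.

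The first step is to upgrade the characterization \eqref{eq:Charac} to a Pythagorean inequality. Since $\hat{\theta} = \Pi_{\C}(Y)$ and $\theta \in \C$, \eqref{eq:Charac} gives $\langle Y - \hat{\theta},\, \theta - \hat{\theta}\rangle \le 0$; expanding $\|Y - \theta\|^2 = \|(Y - \hat{\theta}) + (\hat{\theta} - \theta)\|^2$ and discarding the resulting nonnegative cross term yields $\|Y - \theta\|^2 \ge \|Y - \hat{\theta}\|^2 + \|\hat{\theta} - \theta\|^2$. Substituting $Y = \theta^* + \sigma Z$ on both sides and cancelling the common $\|\sigma Z\|^2$ term, this rearranges to the basic inequality
\[
  \|\hat{\theta} - \theta^*\|^2 + \|\hat{\theta} - \theta\|^2 \;\le\; \|\theta^* - \theta\|^2 + 2\sigma\,\langle Z,\, \hat{\theta} - \theta\rangle .
\]
The extra term $\|\hat{\theta} - \theta\|^2$ on the left --- which is lost if one uses only the crude inequality $\|Y - \hat{\theta}\|^2 \le \|Y - \theta\|^2$ --- is precisely what will absorb the slack from the Cauchy--Schwarz step below and thereby keep the leading constants equal to one.

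The second step brings in the tangent cone. Because $\hat{\theta}, \theta \in \C$, the vector $\hat{\theta} - \theta$ lies in the closed convex cone $T := T_{\C}(\theta)$ (indeed $\C - \theta \subseteq T$). Using the Moreau decomposition $\sigma Z = \Pi_T(\sigma Z) + \big(\sigma Z - \Pi_T(\sigma Z)\big)$ and the fact that $\sigma Z - \Pi_T(\sigma Z)$ lies in the polar cone of $T$, one gets $\langle \sigma Z - \Pi_T(\sigma Z),\, \hat{\theta} - \theta\rangle \le 0$, hence $2\sigma\langle Z, \hat{\theta} - \theta\rangle \le 2\langle \Pi_T(\sigma Z), \hat{\theta} - \theta\rangle$. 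Since $\Pi_T(\sigma Z) = \sigma\,\Pi_T(Z)$ by positive homogeneity of the projection onto a cone, Cauchy--Schwarz together with $2ab \le a^2 + b^2$ gives
\[
  2\sigma\langle Z, \hat{\theta} - \theta\rangle \;\le\; 2\sigma\,\|\Pi_T(Z)\|\,\|\hat{\theta} - \theta\| \;\le\; \sigma^2 \|\Pi_T(Z)\|^2 + \|\hat{\theta} - \theta\|^2 .
\]
Inserting this into the basic inequality, the $\|\hat{\theta} - \theta\|^2$ terms cancel and the displayed realization-wise bound follows; taking expectations and then the infimum over $\theta \in \C$ finishes the proof.

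I do not anticipate a serious obstacle: the single genuine idea is to invoke the Pythagorean identity rather than mere optimality of $\hat{\theta}$, so that the quadratic remainder produced by Cauchy--Schwarz is cancelled for free (a naive argument based only on $\|Y - \hat{\theta}\| \le \|Y - \theta\|$ loses a multiplicative constant and an extra cross term on the noise term). The routine facts to check carefully are that $T_{\C}(\theta)$ is a bona fide closed convex cone containing $\C - \theta$, that projection onto it is positively homogeneous, and the standard orthogonality/polarity properties of that projection. It is also worth remarking that mean-zeroness of $Z$ is not actually used anywhere above --- the bound holds for each realization of $Z$ --- although that is the case of interest.
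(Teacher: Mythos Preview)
Your argument is correct and is exactly the standard deterministic proof (essentially Bellec's Proposition~2.1), which the paper cites but does not reproduce: the Pythagorean inequality for projection onto $\C$, the inclusion $\hat{\theta}-\theta\in T_{\C}(\theta)$, the polar-cone property $\langle \sigma Z-\Pi_T(\sigma Z),\hat{\theta}-\theta\rangle\le 0$, and the Cauchy--Schwarz/AM--GM step that cancels $\|\hat{\theta}-\theta\|^2$. Your observation that the bound is realization-wise (so mean-zeroness of $Z$ is unused) is also exactly what the paper notes in the remark following the lemma.
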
 
Some remarks on this lemma are given below. 
\begin{remark}[Statistical dimension]
  When $Z \sim N_n(0, I_n)$ and $K$ is a closed convex cone in $\R^n$, the quantity  
\begin{equation*}\label{eq:StatDim}
\delta(K) := \E \big[\| \Pi_K(Z)\|^2 \big] = \E \big[\langle Z, \Pi_K(Z)\rangle \big] = \E \left[ \Big(\sup_{\theta \in K:\|\theta\| \le 1} \langle Z, \theta \rangle \Big)^2 \right],
\end{equation*}
has been termed the \textit{statistical dimension} of $K$ by Amelunxen et al.~\cite{LivEdge}. Therefore, when $Z \sim N_n(0, I_n)$,inequality \eqref{eq:AdapRB} bounds the risk $R(\hat{\theta}, \theta^*)$ of the LSE via the statistical dimension of the tangent cones $T_{\C}(\theta)$. 
\end{remark}

\begin{remark}[No distributional assumptions]
There are no distributional assumptions on $Z$ for~\eqref{eq:AdapRB} to
hold. In particular, the components of $Z$ can be arbitrarily
dependent and non-Gaussian (as long as $\E \|Z\|^2 < \infty$). This follows from \cite[Proposition 2.1]{Bellec15} which is a deterministic assertion. 
\end{remark}

\begin{remark}
When $\theta^* \in \C$, then one can take $\theta = \theta^*$ in the
right side of~\eqref{eq:AdapRB} to deduce that 
\begin{equation}\label{oha}
  \E \|\hat{\theta} - \theta^*\|^2 \leq \sigma^2 \E
  \|\Pi_{T_{\C}(\theta^*)}(Z)\|^2.  
\end{equation}
This inequality \eqref{oha} was first proved by \cite{OHJournal}. Bellec \cite{Bellec15} extended it to the case when $\theta^* \notin \C$ by proving Lemma~\ref{lem:AdapRB}.    
\end{remark}

\begin{remark}[Tightness]\label{belt}
A remarkable fact proved by Oymak and Hassibi~\cite{OHJournal} is that  
\begin{equation}\label{tnt}
  \lim_{\sigma \downarrow 0} \frac{1}{\sigma^2} \E \|\hat{\theta} -
    \theta^*\|^2 = \E \|\Pi_{T_{\C}(\theta^*)}(Z)\|^2 \quad \mbox{when
    $\theta^* \in \C$}. 
\end{equation}
Analogues of this inequality when $\theta^* \notin \C$ have been recently proved in~\cite{fang2017risk}. The equality in~\eqref{tnt} implies that if $r_n(\theta)$, for $\theta \in \C$,  is
any \textit{rate} term controlling the adaptive behavior of the LSE 
in the following sense: 
\begin{equation}\label{dti}
  \E \|\hat \theta - \theta^*\|^2 \leq \inf_{\theta \in \C}
  \left\{\|\theta^* - \theta\|^2 + \sigma^2 r_n(\theta) 
  \right\} \quad \mbox{for every $\theta^* \in \R^n$}
\end{equation}
then it necessarily must happen that 
\begin{equation*}
  r_n(\theta) \geq \E \|\Pi_{T_{\C}(\theta)}(Z)\|^2 \quad \mbox{for every 
    $\theta \in \C$}. 
\end{equation*}
Thus it suffices to work with
tangent cones (i.e., focussing on bounding $\E
\|\Pi_{T_{\C}(\theta)}(Z)\|^2$) for proving adaptive risk bounds of the form \eqref{dti}. It must be noted  here though that \eqref{dti} can be quite suboptimal when $\sigma$ is large.    
\end{remark}

We shall show how to apply Lemma \ref{lem:AdapRB} to prove the adaptive risk bound \eqref{eq:IsoAdapBd} in Section \ref{sec:AdapIsoReg}. Lemma \ref{lem:AdapRB} is also crucially used in \cite{Bellec15} to prove the adaptive risk bound \eqref{yno} for convex regression. Lemma \ref{lem:AdapRB} also has applications beyond shape-restricted regression. It has been recently used to prove risk bounds for total variation denoising and trend filtering (see \cite{guntuboyina2017spatial}).

\section{Pointwise Asymptotic Theory}\label{sec:Inference}
Till now we have focused our attention on (global) risk properties of shape-restricted LSEs. In this section we investigate the pointwise limiting behavior of the estimators. By the pointwise behavior we mean the distribution of the LSE $\hat f_n$ at a fixed point (say $t$), properly normalized. Developing asymptotic distribution theory for the LSEs turns out to be rather non-trivial, mainly because there is no closed form simple expression for the LSEs; all the properties of the estimator have to be teased out from the general characterization~\eqref{eq:Charac}. 

The LSEs exhibit non-standard asymptotics: The limiting distributions that arise are non-normal (and the rates of convergence are slower than ${n}^{-1/2}$) and involve many nuisance parameters (that are difficult to estimate). As before, analyzing the isotonic LSE is probably the simplest, and we will work with this example in Section~\ref{sec:PwLimTh}. In Section~\ref{sec:ConfInt} we develop bootstrap and likelihood based methods for constructing (asymptotically) valid pointwise confidence intervals, for the isotonic regression function $f$, that bypass estimation of nuisance parameters. Section~\ref{sec:PwLimTh-Cvx} deals with the case when $f$ is convex --- we sketch a proof of the pointwise limiting distribution of the convex LSE. Not much is known in this area beyond $d=1$ for any of the shape-restricted LSEs discussed in the Introduction. 

\subsection{Pointwise limit theory of the LSE in isotonic regression}\label{sec:PwLimTh}
Let us recall the setup in~\eqref{eq:RegMdl} where $f$ is now an unknown nondecreasing function. Further, for simplicity, let $x_i = i/n$, for $ i =1,\ldots, n$, be the ordered design points and we assume that $\varepsilon_1,\ldots, \varepsilon_n$ are i.i.d.~mean zero errors with finite variance $\sigma^2 >0$.  The above assumptions can be relaxed substantially, e.g., we can allow for dependent, heteroscedastic errors and the $x_i$'s can be any sequence whose empirical distribution converges to a probability measure on $[0,1]$; see e.g.,~\cite{AH06},~\cite[Section 3.2.15]{VW96}.

We start with another useful characterization of the isotonic LSE (\cite[Theorem 1.1]{BBBB72}). Define the cumulative sum diagram (CSD) as the continuous piecewise affine function $F_n: [0,1] \to \R$ (with possible knots only at $i/n$, for $i = 1,\ldots, n$) for which 
\begin{equation}\label{eq:F_n}
F_n(0) := 0, \quad \mbox{and} \quad F_n\Big(\frac{i}{n}\Big) := \frac{1}{n} \sum_{j=1}^i y_j, \quad \mbox{for} \; i = 1,\ldots, n.
\end{equation} 
For any function $g: I \to \R$, where $I \subset \R$ is an interval, we denote by $\tilde g$ the {\it greatest convex minorant} (GCM) of $g$ (on $I$), i.e., $\tilde g$ is the largest convex function sitting below $g$.  Thus, $\tilde F_n$ denotes the GCM of $F_n$ (on the interval $[0,1]$). Let $\hat f_n:(0,1] \to \R$ be defined as the {\it left-hand derivative} of the GCM of the CSD; i.e., $$\hat f_n := [\tilde F_n]' \equiv \tilde F_n' ,$$ the left-hand slope of $\tilde F_n$. Then, it can be shown that (see e.g.,~\cite[Chapter 1]{RWD88}) the isotonic LSE $\hat \theta$ is given by $\hat \theta_i = \hat f_n(i/n)$, for $i = 1,\ldots, n$. Figure~\ref{fig:IsoReg-GCM} illustrates these concepts from a simple simulation. 

\begin{figure}
\includegraphics[height = 2.0in, width = 5.2in]{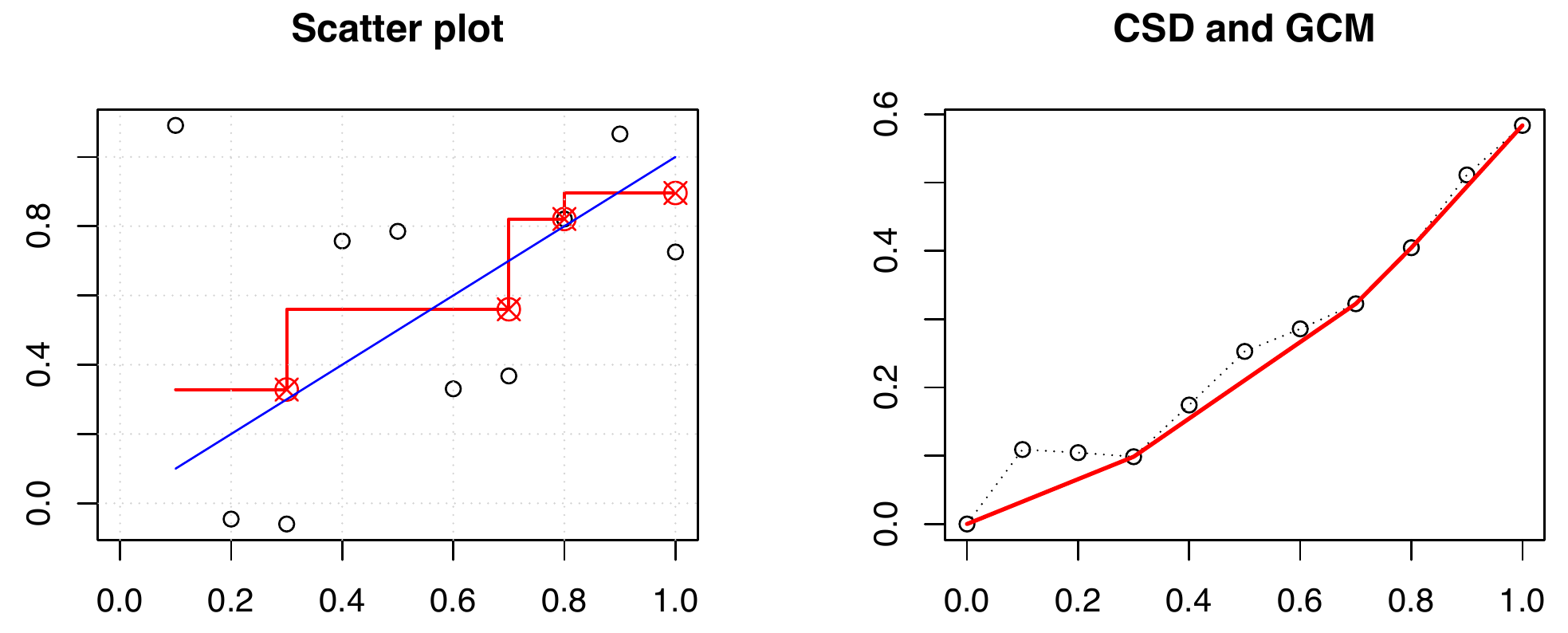}
\caption{The left panel shows the scatter plot with the fitted function $\hat f_n$ (in red) and the true $f$ (in blue) while the right panel shows the CSD (dashed) along with its GCM (in red). Here $n = 10$, $f(x) = x$ and $\eps \sim N_n(0,\sigma^2 I_n)$ with $\sigma = 0.5$.} 
\label{fig:IsoReg-GCM}
\end{figure}

Fix $0<t<1$ and suppose that $f$ has a positive continuous derivative $f'$ on some neighborhood of $t$. The following gives the asymptotic distribution of $\hat f_n(t)$, properly normalized:\begin{equation}\label{eq:LimDist}
\Delta_n := n^{1/3}\{\hat f_n(t) - f(t)\} \stackrel{d}{\to} \kappa \mathbb{C},
\end{equation}
where $\mathbb{C} := \arg \min_{h \in \R} \{\W(h) + h^2\}$ has Chernoff's distribution (here $\W(\cdot)$ is a two-sided Brownian motion starting from 0) and $\kappa := [4\sigma^2 f'(t)]^{1/3}$; see e.g.,~\cite{Brunk70, Wright81, G83, G85}. In Section~\ref{sec:Asym-IsoReg} we give an outline of a proof of~\eqref{eq:LimDist}. The first result of this type was derived in~\cite{P-Rao69} for the Grenander estimator --- the maximum likelihood estimator of a nonincreasing density in $[0,\infty)$ (see~\cite{G56}). Note that the Chernoff's random variable $\mathbb{C}$ is pivotal and its quantiles are known; see e.g.,~\cite{C64, GW01}.

\subsubsection{Other asymptotic regimes.} Observe that the assumption $f'(t) \ne 0$ is crucial in deriving the limiting distribution in~\eqref{eq:LimDist}. One may ask, what if $f'(t) = 0$? Or even simply, what if $f$ is a constant function on [0,1]? In the latter case, we can easily show that, for $t \in (0,1)$,
\begin{eqnarray*}
\sqrt{n} \{\hat f_n(t) - f(t)\} \stackrel{d}{\to} \sigma[\tilde \B]'(t),
\end{eqnarray*} 
where $\B$ is the standard Brownian motion on $[0,1]$. The above holds because of the following observations. First note that  $\sqrt{n} \{\hat f_n(t) - f(t)\}$ is the left-hand slope of the GCM of $ \sqrt{n} (F_n - F)$ at $t$ (as $F$ is now linear). As $ \sqrt{n} (F_n - F)$ converges in distribution to the process $\sigma \B$ on $D[0,1]$, we have $$\sqrt{n} \{\hat f_n(t) - f(t)\} = \sqrt{n} [\widetilde{ F_n- F}]' (t) \stackrel{d}{\to} \sigma [\tilde \B]'(t).$$ The above heuristic can be justified rigorously; see~e.g.,~\cite[Section 3.2]{GJ14}. In the related (nonincreasing) density estimation problem,~\cite{G85, CD99} showed that if  $f(t)$ lies on a flat stretch of the underlying function $f$ then the LSE (which is also the nonparametric maximum likelihood estimator, usually known as the Grenander estimator) converges to a non-degenerate limit at rate $n^{-1/2}$, and they characterized the limiting distribution. 

If one assumes that $f^{(j)}(t) =0$, for $j=1,\ldots, p-1$, and $f^{(p)}(t) \ne 0$ (for $p \ge 1$), where $f^{(j)}$ denotes the $j$'th derivative of $f$, then one can derive the limiting distribution of $\hat f_n(t)$, which now converges at the rate $n^{-p/(2p +1)}$; see e.g.,~\cite{Wright81, L82}. Note that all the above scenarios illustrate that the rate of convergence of the isotonic LSE $\hat f_n(t)$ crucially depends on the the behavior of $f$ around $t$; this demonstrates the adaptive behavior of the isotonic LSE from a pointwise asymptotics standpoint.

\subsection{Constructing asymptotically valid pointwise confidence intervals}\label{sec:ConfInt}
Although~\eqref{eq:LimDist} gives the asymptotic distribution of the isotonic LSE at the point $t$, it is not immediately clear how it can be used to construct a confidence interval for $f(t)$ --- the limiting distribution involves the nuisance parameter $f'(t)$ that needs to be estimated. A naive approach would suggest plugging in an estimator of $f'(t)$ in the limiting distribution in~\eqref{eq:LimDist} to construct an approximate confidence interval. However, as $\hat f_n$ is a piecewise constant function, $\hat f_n'$ is either 0 or undefined and cannot be used to estimate $f'(t)$ consistently. This motivates the use of bootstrap and likelihood ratio based methods to construct confidence intervals for $f(t)$. In the following we just assume that $\eps_1,\ldots, \eps_n$ i.i.d.~mean zero errors with finite variance. 

\subsubsection{Bootstrap based inference.}

Let us revisit~\eqref{eq:LimDist} and consider the problem of bootstrapping $\hat f_n$ to estimate the distribution of $\Delta_n \sim H_n$ (say). Suppose that $\hat H_n$ is an approximation of $H_n$ (which will be obtained from bootstrap in this subsection) that can be computed. Then, an approximate $1- \alpha$ ($0 <\alpha <1$) confidence interval for $f(t)$ would be $$[\hat f_n(t) - \hat q_{1-\alpha/2}n^{-1/3}, \hat f_n(t) - \hat q_{\alpha/2}n^{-1/3}],$$ where $\hat q_\alpha$ denotes the $\alpha$'th quantile of $\hat H_n$.   

In a regression setup there are two main bootstrapping techniques: `bootstrapping pairs' and `bootstrapping residuals'. Bootstrapping pairs refers to drawing with replacement samples from the data $\{(x_i,y_i): i=1, \ldots, n\}$; it is more natural when we have i.i.d.~bivariate data from a joint distribution. The residual bootstrap procedure fixes the design points $x_i$'s and draws $$y_i^* := \check f_n(x_i) + \varepsilon^*_i, \quad \; \; i = 1,\ldots, n$$ (the $^*$ indicates a data point in the bootstrap sample) where $\check f_n$ is a natural estimator of $f$ in the model, and $\varepsilon^*_i$'s are i.i.d.~(conditional on the data)~having the distribution of the (centered) residuals $\{y_i - \check f_n(x_i): i=1, \ldots, n\}$. Let $\hat f_n^*$ denote the isotonic LSE computed from the bootstrap sample. The bootstrap counterpart of $\Delta_n$ (cf.~\eqref{eq:LimDist}) is
\begin{equation*}\label{eq:Boots-Delta}
\Delta^*_n := n^{1/3} \{\hat f_n^*(t) -  \check f_{n}(t)\}.
\end{equation*}
We now approximate $H_n$ by $\hat H_n$, the conditional distribution of $\Delta^*_n$, given the data. Note that a natural candidate for $\check f_n$ in  isotonic regression is the LSE $\hat f_n$. 

Will this bootstrap approximation (by $\hat H_n$) work? This brings us to the notion of {\it consistency} of the bootstrap. Let $d$ denote the Levy metric or any other metric metrizing weak convergence of distributions. We say that $\hat H_{n}$ is {\it weakly consistent} if $d(H_n, \hat H_n)\stackrel{}{\rightarrow} 0$ in probability. If the convergence holds with probability 1, then we say that the bootstrap is {\it strongly consistent}. If $H_{n}$ has a weak limit $H$, then consistency requires $\hat H_{n}$ to converge weakly to $H$, in probability; and if $H$ is continuous, consistency requires 
$\sup_{x \in \mathbb{R}} |\hat H_{n}(x) - H(x)| \stackrel{}{\rightarrow} 0$ in probability.

It is well-known that both the above bootstrap schemes --- bootstrapping pairs and bootstrapping residuals with $\check f_n = \hat f_n$ --- yield {\it inconsistent} estimators of $H_n$; see~\cite{AH05, SBW10, Kosorok08, SX15, GH16}. Intuitively, the inconsistency of the residual bootstrap procedure can be attributed to the lack of smoothness of $\hat f_n$. Indeed a version of the residual bootstrap where one considers $\check f_n$ as a smoothed version of $\hat f_n$ (that can approximate the nuisance parameter $f'(t)$ consistently) can be shown to be consistent; see e.g.,~\cite{SX15}. Specifically, suppose that $\check f_n$ is a sequence of estimators such that
 \begin{equation}\label{eq:F_n-F}
\lim_{n\rightarrow\infty}\sup_{x \in I} \big|\check f_n(x)-f(x) \big|=0,
\end{equation} 
almost surely, where $I \subset [0,1]$ is an open neighborhood of $t$, and 
\begin{equation}\label{condf}
\lim_{n\rightarrow\infty} \sup_{h \in K} n^{1/3} \big|\check f_n(t+n^{-1/3}h)- \check f_n(t) - f'(t)n^{-1/3}h \big|=0
\end{equation} 
almost surely for any compact set $K \subset \R$. It can be shown, using arguments similar to those in the proof of~\cite[Theorem 2.1]{SX15}, that if~\eqref{eq:F_n-F} and~\eqref{condf} hold then, conditional on the data, the bootstrap estimator $\Delta_n^*$ converges in distribution to $\kappa\mathbb{C}$, as defined in~\eqref{eq:LimDist}, almost surely. Thus, this bootstrap scheme is strongly consistent.

A natural question that arises now is: Can we construct a smooth $\check f_n$ such that~\eqref{eq:F_n-F} and~\eqref{condf} hold w.p.~1? We briefly describe such a smoothed bootstrap scheme. Let $k(\cdot)$ be a differentiable symmetric  density (kernel) with compact support (e.g., $k(x) \propto (1-x^2)^2{\mathbf 1}_{[-1, 1]}(x)$) and let $K(x) := \int_{-\infty}^x k(s) \, ds$ be the corresponding distribution function. Let $h$ be a smoothing parameter. Note that $h$ may depend on the sample size $n$ but, for notational convenience, we write $h$ instead of $h_n$. Let  
$k_{h}(x) :=  k(x/h)/h  \mbox{ and } K_{h} (x) := K(x/h).$
Then the smoothed isotonic LSE of $f$ is defined as (cf.~\cite{GJW10})
\begin{equation*}\label{eq:SmoothLSE}
\check f_n (x) \equiv \check f_{n,h}(x) := \int K_{h}(x-s) \, d \hat f_n(s), \quad x \in [0,1].
\end{equation*}
It can be easily seen that $\check f_{n}$ is a nondecreasing function (if $t_2>t_1$, then $ K_{h}(t_2-s)\geq K_{h}(t_1-s)$ for all $s$).
Observe that $\check f_{n}$ is a smoothed version of the step function  $\hat f_n$. 
In~\cite{SX15} it is shown that the obtained bootstrap procedure is strongly consistent, i.e., $\Delta^*_n = n^{1/3} \{\hat f_n^*(t) -  \check f_{n}(t)\}$ converges weakly to $\kappa\mathbb{C}$, conditional on the data, almost surely.

It is natural to conjecture that a (suitably) smoothed bootstrap procedure would also yield (asymptotically) valid pointwise confidence intervals for other shape-restricted regression functions (e.g., convex regression). Moreover, it can be expected that the naive `with replacement' bootstrap and the residual bootstrap using the LSE would lead to inconsistent procedures. However, as far as we are aware, there is no work that rigorously proves these claims.

\subsubsection{Likelihood ratio based inference.}
Banerjee and Wellner \cite{BW01} proposed a novel method for constructing pointwise confidence intervals for a monotone function (e.g., $f$) that avoids the need to estimate nuisance parameters; also see~\cite{Ban07, GJ15}. Specifically, the strategy is to consider the testing problem $H_0: f(t) = \phi_0$ versus $H_1: f(t) \ne \phi_0$, where $\phi_0 \in \R$ is a known constant, using the likelihood ratio statistic (LRS), constructed under the assumption of i.i.d.~Gaussian errors. If one could find the limiting distribution of the LRS under the null hypothesis and show that the limit is pivotal (as is the case in parametric models where the limiting distribution turns out to be $\chi^2$) then that would provide a convenient way to construct a confidence interval for $f(t)$ via the method of inversion: an asymptotic level $1 -\alpha$ confidence set would be given by the set of all $\phi_0$'s for which the null hypothesis $H_0: f(t) = \phi_0$ is accepted.

To study the form of the LRS, we first need to understand the {\it constrained} isotonic LSE. Consider the setup introduced in the beginning of Subsection~\ref{sec:PwLimTh} and suppose that $l := \lfloor n t \rfloor$, so that $l/n \le t < (l+1)/n$. Under $H_0:f(t) = \phi_0$, the constrained isotonic LSE $\hat f_n^0$ is given by 
\begin{equation*}\label{eq:Cons-IsoLSE}
\big\{\hat f_n^0(i/n): i = 1,\ldots, n \big\} := \argmin_{\theta \in \R^n: \theta_1 \le \cdots \le \theta_l \le \phi_0 \le \theta_{l+1} \le \cdots \le \theta_n} \sum_{i=1}^n (Y_i - \theta_i)^2.
\end{equation*}
Note that both functions $\hat f_n$ and $\hat f_n^0$ are identified only at the design points. By convention, we extend them as left-continuous piecewise constant functions
defined on the entire interval (0, 1]. The hypothesis test is based on the following LRS: 
\begin{equation*}\label{eq:LRS}
L_n := \sum_{i=1}^n \big(Y_i - \hat f_n^0(i/n)\big)^2 -  \sum_{i=1}^n \big(Y_i - \hat f_n(i/n)\big)^2.
\end{equation*}
As shown in~\cite{Ban07,Ban09} (in the setting of random design, which can be easily generalized to cover the uniform grid design; see~\cite{BBS16}), if $f(t) = \phi_0$ and $f'(t) \ne 0$, then $$L_n \stackrel{d}{\to} \sigma^2 L,$$ where $L$ is a nonnegative random variable expressible as a functional of two-sided Brownian motion plus quadratic drift $\{\W(h) + h^2: h \in \R\}$, and $\sigma^2$ is the common variance of the errors. An important feature of this limiting distribution is that it is pivotal --- free of the parameters of the problem. This readily yields confidence sets for $f(t)$ (obtained by the method of inversion) that do not need estimation of the nuisance parameter $f'(t)$ --- a challenging quantity to estimate in practice. However, an estimate of $\sigma^2$ is required, which can be easily obtained: The natural estimator $\|Y - \hat \theta\|^2/n$ of $\sigma^2$ is asymptotically normal with mean $\sigma^2$ and variance $2\sigma^4/n$ (see~\cite[Proposition 3]{MW00}). This methodology has been applied successfully in several monotone function estimation problems; see e.g.,~\cite{BW01, Ban07, GJ15}. The method, and extensions thereof,  also applies to both short- and long-range dependence regimes for the errors; see~\cite{BBS16}. Also see~\cite{BW05, BBS16} for illustrations and examples of the superior performance of the LR based method over plug-in methods for constructing confidence intervals for $f(t)$, especially when the estimation of the derivative $f'(t)$ is difficult. 

Not much is known about the (asymptotic) distribution of the LRS beyond monotone function estimation problems. However, in the recent papers~\cite{DW16,DW16b} the authors study the LRS  for testing the location of the mode of a log-concave density $f$ using the unconstrained/constrained maximum likelihood estimator of a log-concave density (also see~\cite{BRW09}) and show that, under the null hypothesis which fixes the value of the mode of $f$ (and assumes strict curvature of $- \log f$ at the mode), the LRS is asymptotically pivotal.

\subsection{Pointwise limit theory of the LSE in convex regression}\label{sec:PwLimTh-Cvx}
We assume that we have data from~\eqref{eq:RegMdl} where $f:[0,1] \to \R$ is now assumed to be convex (see Example~\ref{ex:ConvexReg}). In this section we study the pointwise asymptotic theory for the convex LSE. For simplicity, as before, we consider equi-spaced design points. Let us first describe the characterization of the convex LSE that will drive the asymptotic analysis. Given the convex LSE $\hat \theta$, let $\hat \Theta = (\hat \Theta_1,\ldots,\hat \Theta_n)$ denote the vector of its cumulative sums (divided by $n$), i.e., $\hat \Theta_i := n^{-1} \sum_{j=1}^i \hat \theta_j$, for $i = 1,\ldots, n,$ and recall $F_n$, as defined in~\eqref{eq:F_n}. Then, $\hat \theta$ has the following characterization: $\hat \theta$ is the unique vector such that $\hat \Theta_n = F_n(1)$ and 
\begin{equation}\label{eq:CvxLSE-Ch}
\sum_{i=1}^{j-1} \hat \Theta_i \begin{cases}
\ge \sum_{i=1}^{j-1} F_n(i/n) & \mbox{for }\; j = 2,\ldots, n,\\
 = \sum_{i=1}^{j-1} F_n(i/n) &	 \mbox{if $\hat \theta$ has a kink at $j/n$ or $j = n$;}
\end{cases}
\end{equation}
this follows from the characterization of projection on the closed convex set $\K$ (as defined in~\eqref{eq:CvxReg}); see~\cite[Lemma 2.6]{GJW01-a} for a complete proof. We define the convex LSE $\hat f_n:[1/n,1] \to \R$  of $f$ as the piecewise linear interpolation of the points $\{(i/n, \hat \theta_i): i = 1,\ldots, n\}$.



Fix $t \in (0,1)$ and consider the estimation of $f(t)$ using the convex LSE $\hat f_n(t)$ under the assumption that $f''$ is continuous and nonzero in a neighborhood of $t$. If the errors are i.i.d.~sub-Gaussian with mean zero and $f''(t) \ne 0$, the rate of convergence of $\hat f_n(t)$ is known to be $n^{-2/5}$ (see~\cite{M91}). The pointwise asymptotic distribution of the convex LSE (properly normalized) is derived in~\cite{GJW01-a}. In Groeneboom et al.~\cite{GJW01-a} the authors show that 
\begin{equation}\label{eq:CvxLim}
\Delta_n := n^{2/5}\{\hat f_n(t) - f(t)\} \stackrel{d}{\to} \h''(0),
\end{equation} 
where $\h$ is the ``invelope" of integrated Brownian motion with quartic drift ($+h^4$), and $\h''(0)$ is the second derivative of $\h$ at 0 (which exists w.p.~1). The invelope is a cubic spline lying above and touching integrated Brownian motion $+ h^4$; compare this with the ``envelope'' of Brownian  motion with a parabolic drift ($+h^2$) that appears when analyzing the isotonic LSE (see Section~\ref{sec:Asym-IsoReg}). Although a rigorous proof of the above weak convergence is long and delicate (see~\cite[Theorem 6.3]{GJW01-a}), the main intuition for such a limit can be gotten from looking at the characterization given in~\eqref{eq:CvxLSE-Ch}. We describe some of the main ideas below. The first step is to show that the characterization in~\eqref{eq:CvxLSE-Ch} can be `localized' in an appropriate sense. Then we show that the right side of the inequality in~\eqref{eq:CvxLSE-Ch}, appropriately localized and normalized, converges to a limiting process involving integrated Brownian motion $+ h^4$. Then, a continuous mapping-like result, where we look at the limiting version of the localized~\eqref{eq:CvxLSE-Ch}, yields the convergence of $\Delta_n$. A slightly more detailed sketch of the main steps is provided in Section~\ref{sec:Cvx-Reg-Asymp}. 

\begin{remark}[Multiscale inference in shape-restricted problems] The pointwise asymptotic theory for isotonic and convex regression is developed under suitable smoothness assumptions on $f$, e.g.,~\eqref{eq:LimDist} needs $f'(t) \ne 0$ whereas the weak convergence of~\eqref{eq:CvxLim} assumes $f''(t) \ne 0$. In~\cite{D03}, utilizing suitable multiscale tests, the author constructs confidence bands for $f$ that are locally adaptive in a certain sense (to the underlying smoothness in $f$) and have guaranteed coverage, assuming that $f$ is isotonic or convex. These confidence bands are computationally feasible and are also shown to be asymptotically sharp optimal in an appropriate sense. Also see the recent paper~\cite{YB17} for another method of constructing  finite-sample locally adaptive confidence bands in isotonic regression.
\end{remark}

\section{Computation of the LSE}\label{sec:Compute}


In this section we discuss the computation of the LSE $\hat \theta$ in nonparametric shape-restricted regression problems. Note that in most cases (see e.g., Examples~\ref{ex:IsoMdl}--\ref{ex:AddMdl}) the LSE $\hat \theta$ is the projection of $Y$ onto $\C$, a (finite union of) closed convex set(s) in $\R^n$. If $\C$ is a polyhedral convex set, then the computation of $\hat \theta$ involves solving a {\it quadratic program} with a bunch of {\it linear constraints}. Many off-the-shelf solvers --- e.g., CPLEX, MOSEK, Gurobi --- can solve these quadratic programs easily even for moderately large sample sizes (e.g., $n \approx 10^5$). In the following we consider the main examples in the Introduction and discuss some problem specific algorithms that are computationally more efficient. $\vspace{0.05in}$

{\bf Isotonic and unimodal regression.} For the monotone regression problem~\cite{BBBB72}  presented a graphical interpretation of the isotonic LSE (defined in~\eqref{eq:IsoCharac}) in terms of the GCM of the CSD; see Section~\ref{sec:PwLimTh}. The method of successive approximation to the GCM can be described algebraically as the pool-adjacent-violators algorithm (PAVA); see e.g.,~\cite[p.~9-10]{RWD88}. Roughly speaking, PAVA works as follows. We start with $y_1$ on the left. We move to the right until we encounter the first violation $y_i > y_{i+1}$. Then we
replace this pair by their average, and back-average to the left as needed, to get monotonicity. We continue this process to the right, until finally we reach $y_n$. If skillfully implemented, PAVA has a
computational complexity of $O(n)$; see~\cite{Stout14} for a comparison of various algorithms to solve isotonic regression in $\ell_p$-metrics, for $p \ge 1$. The \texttt{isoreg} command in the {\it stats} package in the R programming language implements the isotonic LSE. Further, see~\cite{Stout08} for an efficient (requiring only $O(n)$ time) computation of the unimodal LSE (Example~\ref{ex:UniReg}). $\vspace{0.05in}$

{\bf Order preserving regression on a partially ordered set.} Given a partial order $\precsim$ on the design points $x_i$'s we can compute the LSE $\hat \theta$ of the isotonic (order preserving) $\theta^*$ by solving~\eqref{eq:LSE}. Here $\C$, the space where $Y$ is projected onto to obtain $\hat \theta$, is a closed convex cone and can be represented as $$\C := \{(\theta_1,\ldots, \theta_n) \in \R^d: \theta_i \le \theta_j \mbox{ if } x_i \precsim x_j, \mbox{ for some } i \ne j\}.$$ Thus, the computation of $\hat \theta$ involves solving a quadratic program with $O(n^2)$ linear constraints (although for some special situations, like isotonic regression in $d=1$, $\C$ can be represented by $O(n)$ linear constraints). The computation of the order preserving LSE on a partially ordered set (Example~\ref{ex:RegPartialOrder}) has received quite a bit of attention recently; see e.g.,~\cite{KRS15, Stout15} and the references therein. In particular,~\texttt{https://github.com/sachdevasushant/Isotonic} gives an implementation of the isotonic LSE using interior point methods. The special case of the matrix isotonic LSE defined in~\eqref{eq:MatLSE} can be computed efficiently by an iterative algorithm (see e.g.,~\cite{G70} and~\cite[Chapter 1]{RWD88}).

Once we obtain $\hat \theta = (\hat \theta_1,\ldots, \hat \theta_n)$ we can then easily construct an estimate $\hat f_n$ of the order preserving $f$ at any $x$ (not necessarily a design point) by taking a maximum over a selected number of coordinates of $\hat \theta$: We can define $\hat{f}_n$ as 
\begin{equation*}\label{eq:DefPhi}
\hat{f}_n(x)  := \sup_{j: x_j \precsim \; x} \hat \theta_j, \quad \mbox{for } x \in \R^d,
\end{equation*}
where we take the convention that $\sup(\emptyset) = -\infty$. Note that $\hat{f}_n$ is indeed order preserving --- for $u, v$ with $u \precsim v$ we have $ \hat{f}_n(u)  = \sup_{j: x_j \precsim \; u} \hat \theta_j \le \sup_{j: x_j \precsim \; v}\hat \theta_j = \hat{f}_n(v), $
as in the right side the supremum is taken over a bigger set. $\vspace{0.05in}$


{\bf Convex regression.}
Algorithms for the computation of the convex LSE (Example~\ref{ex:ConvexReg}) when $d=1$ can be found in~\cite{Dykstra83},~\cite{FraserM89},~\cite{GJW08} and the references therein. When $d>1$, the problem is substantially harder: Due to the lack of a natural ordering of points in $\R^d$ (for $d >1$), the constraint set $\C$ is not easy to express (cf.~\eqref{eq:CvxReg}). In fact, in this case $\C$ can be expressed as the {\it projection} of the higher-dimensional polyhedron
\begin{equation*}\label{eq:Q_cvxreg}
\left\{(\xi,\theta) \in \R^{dn +n} :\xi=[\xi_1^\top, \ldots, \xi_n^\top]^\top, \theta_j + \xi_j^\top (x_i-x_j) \leq \theta_i,\forall \;i,j=1,\dots,n \right\}
\end{equation*}
onto the space of $\theta$; see~\cite{CLS15}. The above characterization can be seen as a consequence of the subgradient inequality for convex functions; see~\cite[Theorem 25.1, p.~242]{Rock70}. Thus the computation of the convex LSE $\hat \theta$ involves  solving a quadratic program with $n(d+1)$ variables and $n(n-1)$ linear constraints; see~\cite{SS11} for the characterization, computation and consistency of the convex LSE where off-the-shelf interior point solvers (e.g., {\texttt {cvx}}, {\texttt {MOSEK}}, etc.) were used to compute $\hat \theta$. However, these off-the-shelf solvers do not scale well and become prohibitively expensive for $n \ge 300$ mainly due to the presence of $O(n^2)$ linear constraints. In~\cite{MCIS15}, exploiting problem specific structure, the authors propose a scalable algorithmic framework based on the augmented Lagrangian method to compute the convex LSE $\hat \theta$. This iterative algorithm can compute the LSE with $n \sim 5000$ and $d \sim 10$ within moderate accuracy (i.e., 4 significant digits) in around 30 minutes in a laptop. $\vspace{0.05in}$ 

{\bf Shape constrained additive models.} The computation of the additive shape-restricted (Example~\ref{ex:AddMdl}) LSE is discussed in~\cite{Meyer13b, CS16}. As this reduces to solving a quadratic program with $O(n)$ linear constraints, off-the-shelf solvers can be effectively used for computing the LSE. The shape-restricted LSE can also be computed efficiently by the back-fitting algorithm (\cite{BF85}) --- a simple iterative procedure used to fit a generalized additive model --- which involves fitting one function at a time (out of the $d$ many univariate nonparametric functions). $\vspace{0.05in}$

{\bf Shape-restricted single index model.} Let us now look at Example~\ref{ex:SIM}. Here interest focuses on estimating the nonparametric (shape-restricted) function $m$ and the finite-dimensional parameter $\beta^*$. Although single index models are well-studied in the statistical literature (see e.g.,~\cite{Powelletal89},~\cite{LiDuan89},~\cite{cuietal11} and the references therein), estimation and inference in shape-restricted single index models are not very well-developed, despite their numerous applications. 
The LSE in the monotone single index model is  defined as 
\begin{equation}\label{eq:MonoSIM}
(\hat m, \hat \beta) := \argmin_{\psi,\beta} \sum_{i=1}^n (y_i - \psi(x_i^\top\beta))^2,
\end{equation} 
where the minimization is over all nondecreasing functions $\psi: \R \to \R$ and over $\beta \in \R^d$ (with $\|\beta\| = 1$, for identifiability). As the above LSE solves a non-convex problem, its computation is non-trivial. A version of the following alternating minimization scheme is typically applied to compute the LSE. For a fixed $\beta$, the sum-of-squared errors can be easily minimized over all nondecreasing functions (as this reduces to the problem to univariate isotonic regression). However, the minimization of the profiled least squares criterion (over $\beta$), for a fixed $\psi$, is non-smooth and non-convex; see~\cite{Kakade11, groeneboom2016current} and the references therein for some strategies to find $\hat \beta$. A similar strategy is employed in computing the maximum likelihood estimate in the related problem of current status regression; see e.g.,~\cite{groeneboom2016current}. Also see~\cite{KPS17} and the R package \texttt{simest} for the related computation in a convex single index model.

\section{Some open problems}\label{sec:Open}
In this section we state and motivate a few open research problems and some possible future directions. $\vspace{0.05in}$

{\bf Beyond i.i.d.~Gaussian errors.}
We have mentioned in Section \ref{sec:IsoRisk} that risk bounds for isotonic regression do not assume  that the errors $\eps_1, \dots, \eps_n$ are Gaussian. Indeed, the worst case risk bound \eqref{eq:IsoWC} as well as the adaptive risk bound in Theorem \ref{thm:AdapIsoReg} work under no distributional assumptions on the errors (it is only assumed that the errors are i.i.d.~with mean zero and finite variance $\sigma^2$; even the i.i.d.~assumption can be relaxed considerably; see \cite{Zhang02}). However the risk bounds for other shape-restricted regression problems (including convex regression, isotonic regression on partially ordered sets such as multivariate isotonic regression, unimodal regression, etc.) assume Gaussian (or sub-Gaussian) errors. Based on the results for the univariate isotonic LSE, we believe that the assumption of Gaussianity should not really be necessary for these other problems as well. However the existing proof techniques for these risk bounds strongly rely on the assumption of sub-Gaussianity. It will be very interesting to prove risk bounds in these problems without Gaussianity. We believe that new techniques will need to be developed for this. $\vspace{0.05in}$

{\bf Beyond $\ell_2$-loss.}  Most of the risk results available in the shape constrained literature apply only to the $\ell_2$-loss. A notable exception is the case of isotonic regression where risk bounds are available under the $\ell_p$-loss for every $p \geq 1$ (as already described in Subsection \ref{pp}). It will be interesting to develop risk results for $\ell_p$-losses in problems such as convex regression and multivariate shape-restricted regression. The results for isotonic regression (see Theorem \ref{thm:L_p} and the following discussion) indicate that adaptation risk bounds for LSEs have a different relationship with oracle risk bounds for $p \neq 2$. For example, for $p < 2$, the isotonic LSE is suboptimal only by a constant factor in comparison to the oracle while for $p > 2$, the isotonic LSE is significantly suboptimal. We believe that it is quite non-trivial to study risk of the LSEs under $\ell_p$-loss functions for $p \neq 2$. The existing abstract theory for studying LSEs seems to give risk results only under the $\ell_2$-loss function. $\vspace{0.05in}$

{\bf Minimax Results.}  The risk of a LSE  $R(\hat{\theta}, \theta^*)$  in a shape-restricted regression problem usually varies quite significantly as $\theta^*$ varies over the parameter space. For example, in isotonic regression, the risk behaves as $n^{-2/3}$ when $V(\theta^*)$ is bounded and as $(k/n) \log(n/k)$ when $\theta^*$ is piecewise constant having $k$ constant pieces. Minimax lower bounds over these parameter classes allow the assessment of optimality of the LSE compared to other estimators. We mentioned in Remark \ref{reim} that the isotonic LSE is minimax optimal over $\{\theta \in \I : \theta_n - \theta_1 \leq V\}$ for a wide range of values of $V$. In an interesting recent paper \cite{gao2017minimax}, the authors characterized the minimax risk over the class of all monotone vectors with at most $k$ constant pieces. Their results imply that the risk $(k/n) (\log (n/k))$ achieved by the isotonic LSE over this class is only suboptimal by a factor of $(\log n)/(\log \log n)$ in comparison to the minimax risk. Minimax lower bounds exist for other shape-restricted regression problems (see e.g., \cite{GS15, CGS15, Bellec15, Han17, CL17}) which suggest that the LSE is nearly minimax optimal but some of these results are not as tight as the corresponding results for univariate isotonic regression. It will be interesting to develop tight minimax results for other shape-restricted regression problems which will allow a precise evaluation of the minimaxity properties of the LSEs. 
 $\vspace{0.05in}$



{\bf Estimation of other shape constrained regression functions.} In the recent years there has been quite a bit of interest in studying different shape-restricted regression functions, beyond $d=1$. We have already seen a few such examples in this paper (e.g., Examples~\ref{ex:RegPartialOrder},~\ref{ex:ConvexReg} and~\ref{ex:AddMdl}). What are other useful shape-restrictions in multi-dimension? In the following we mention a few such shape constraints (that have many real applications): (i) unordered weak majorization, (ii) quasiconvexity, and (iii) supermodularity.  

{\it Unordered weak majorization.} In Example~\ref{ex:RegPartialOrder} we discussed the problem of estimating an order preserving regression function (with respect to a partial order). Robertson et al.~\cite[Chapter 1]{RWD88} gives a nice overview of the properties of the LSE in this problem. As an example we introduced a generalization of monotonicity  beyond $d =1$, namely, coordinate-wise monotonicity. In the following we introduce and characterize another related (and slightly stronger) notion of monotonicity in multi-dimensions that is closely tied to the concept of majorization and Schur-convexity (see e.g.,~\cite{Ineq11}). We define the {\it unordered weak majorization} partial order $\precsim$ as
\begin{equation*}
(u_1,\ldots, u_d) \precsim (v_1,\ldots, v_d)\;\; \mbox{if and only if}\;\;\sum_{k=1}^i u_k \le \sum_{k=1}^i v_k \;\; \mbox{for } i=1,2,\ldots, d.
\end{equation*}
The following result characterizes all functions that preserve the ordering $\precsim$.
\begin{theorem}\label{thm:PO}
Let $f:\R^d \to \R$ be a continuously differentiable function. Then $f$ preserves the partial order $\precsim$ if and only if for any $z\in \R^d$, 
\begin{equation}\label{eq:DecDeriv}
f_{(1)}(z) \ge f_{(2)}(z) \ge \ldots \ge f_{(d)}(z)\ge 0,
\end{equation} 
where $f_{(i)}$ denotes the partial derivate of $f$ with respect to the $i$'th coordinate.
\end{theorem}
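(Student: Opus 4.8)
The plan is to reduce the assertion to the elementary fact that a continuously differentiable function on $\R^d$ is nondecreasing in each coordinate separately if and only if all of its partial derivatives are nonnegative everywhere; the reduction is carried out by a linear change of coordinates that transforms $\precsim$ into the coordinatewise order.

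First I would introduce the linear map $\Phi:\R^d\to\R^d$, $\Phi(u):=\bigl(u_1,\,u_1+u_2,\,\dots,\,u_1+\dots+u_d\bigr)$, which sends a vector to its sequence of partial sums. This $\Phi$ is a bijection (its matrix is lower triangular with unit diagonal), and directly from the definition of $\precsim$ one has $u\precsim v$ if and only if $\Phi(u)\le\Phi(v)$ in the coordinatewise order on $\R^d$. Setting $g:=f\circ\Phi^{-1}$, which is again continuously differentiable, the hypothesis that $f$ preserves $\precsim$ becomes precisely: $g(s)\le g(t)$ whenever $s\le t$ coordinatewise, i.e.\ $g$ is coordinatewise nondecreasing.

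Next I would record the standard characterization (with a one-line justification via the one-dimensional fundamental theorem of calculus, applied one coordinate at a time): a $C^1$ function $g$ on $\R^d$ is coordinatewise nondecreasing if and only if $\partial g/\partial s_i\ge 0$ on all of $\R^d$ for each $i=1,\dots,d$. It then remains to translate these conditions back to $f$ via the chain rule. Writing $u=\Phi^{-1}(s)$, so that $u_k=s_k-s_{k-1}$ with the convention $s_0:=0$, the only possibly-nonzero partials $\partial u_k/\partial s_i$ are $\partial u_i/\partial s_i=1$ and, when $i<d$, $\partial u_{i+1}/\partial s_i=-1$; hence
\[
\frac{\partial g}{\partial s_i}(s)=f_{(i)}(u)-f_{(i+1)}(u)\quad(1\le i\le d-1),\qquad \frac{\partial g}{\partial s_d}(s)=f_{(d)}(u).
\]
Since $\Phi$ is surjective, requiring all these quantities to be nonnegative at every $s$ is the same as requiring $f_{(1)}(z)\ge f_{(2)}(z)\ge\dots\ge f_{(d)}(z)\ge 0$ for every $z\in\R^d$, which is exactly~\eqref{eq:DecDeriv}.

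There is no serious obstacle here; the only points needing a little care are getting the signs right in the chain-rule computation and stating the coordinatewise-monotonicity lemma cleanly — its ``only if'' half uses surjectivity of $\Phi$ (so that every point of $\R^d$ is of the form $\Phi(u)$), and its ``if'' half uses that from $s\le t$ one can reach $t$ from $s$ by changing one coordinate at a time, along each of which $g$ is nondecreasing. I would present the argument in exactly this order: the change of variables $\Phi$; the monotonicity $\Leftrightarrow$ nonnegative-partials lemma for $g$; the chain-rule identity displayed above; and the final translation back to $f$.
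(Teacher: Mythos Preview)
Your proof is correct and, at its core, is the same computation as the paper's, but the framing differs enough to be worth noting. The paper introduces the transformations $T_{k,\epsilon}(z)=(z_1,\dots,z_{k-1},z_k+\epsilon,z_{k+1}-\epsilon,z_{k+2},\dots,z_d)$ and proves a lemma that $f$ preserves $\precsim$ if and only if $\epsilon\mapsto f(T_{k,\epsilon}(z))$ is nondecreasing for each $k$ and each $z$; this lemma is established by an explicit path construction from $u$ to $v$ whenever $u\precsim v$. Differentiating then yields~\eqref{eq:DecDeriv}. Your argument is a repackaging of the same idea: the maps $T_{k,\epsilon}$ are exactly translations in the direction $\Phi^{-1}(e_k)$, so the paper's lemma is precisely the statement that $g=f\circ\Phi^{-1}$ is coordinatewise nondecreasing, and the paper's path construction becomes the ``change one coordinate at a time'' step you mention. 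What your change-of-variables framing buys is that the intermediate lemma is the standard characterization of coordinatewise monotonicity, which needs no separate proof; what the paper's framing buys is that it stays in the original coordinates throughout and makes the relevant directions $e_k-e_{k+1}$ (and $e_d$) explicit from the outset.
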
 

The above result shows that in a regression setup if $f$ can be assumed to obey~\eqref{eq:DecDeriv}, i.e., the influence of the predictor variables is ordered, LS estimation under the unordered weak majorization partial order can be used to estimate $f$. Constraints like~\eqref{eq:DecDeriv} appear quite often in econometrics; see e.g.,~\cite{Y03} and the references therein.

{\it Quasiconvexity.} A function $f:\R^d \to \R$ is quasiconvex if and only if its sub-level sets $S_{\alpha }(f)=\{u \in \R^d : f(u)\leq \alpha \}$ are convex for every $\alpha \in \R$; see~\cite[Section 3.4]{Boyd04}. Alternatively, a function $f$ is quasiconvex if and only if $f(\alpha u+(1-\alpha)v) \le \max\{f(u),f(v)\}$, for all $u,v \in \R^n$,  and $\alpha \in [0,1]$ (cf.~\eqref{eq:CvxDef}). Quasiconvex functions extend the notion of unimodality to multi-dimensions and have applications in mathematical optimization and economics. The computation of $f$ using the method of least squares is likely to be a non-convex problem.


{\it Supermodularity.} A function $f: \R^{d} \to \R$ is supermodular if $f(u\vee v)+f(u \wedge v) \geq f(u)+f(v)$ for all $u, v \in \R^{d}$, where $u \vee v$ and $u \wedge v$ denote the component-wise maximum and minimum of $u$ and $v$, respectively, i.e., $(u_1,\ldots, u_d) \vee (v_1,\ldots, v_d) := (\max \{u_1,v_1\}, \ldots, \max\{u_d,v_d\})$ and $(u_1,\ldots, u_d) \wedge (v_1,\ldots, v_d) := (\min \{u_1,v_1\}, $ $\ldots, \min\{u_d,v_d\})$. The concept of supermodularity is used in the social sciences (economics and game theory). If $f$ is twice continuously differentiable, then supermodularity is equivalent to the condition $\frac {\partial^2f(u)}{\partial u_{i}\,\partial u_{j}} \geq 0$ for all $i \neq j$; see e.g.,~\cite{Simchi05}.

In all the above problems, computation of the LSE, its theoretical properties (consistency, rates of convergence, etc.) are unknown. $\vspace{0.05in}$

{\bf Connection to nonnegative least squares.} In many shape-restricted regression problems the LSE $\hat \theta$ is defined as the projection of $Y$ onto a closed convex polyhedral cone $\C$; see Examples~\ref{ex:IsoMdl}--\ref{ex:ConvexReg} and~\ref{ex:AddMdl}. As every closed convex polyhedral cone $\C$ in $\R^n$ can be represented in terms of its generators (i.e., there exists a finite subset of $\C$, whose elements are referred to as generators of $\C$, such that every vector $\theta \in \C$ is a nonnegative linear combination of the generators; see, for example, \cite[Corollary 7.1a]{Alex86}), $\hat \theta$ can be thought of as solving a nonnegative least squares problem. Moreover, in the examples mentioned above, $\C$ is generated by at least $O(n)$ vectors (which form the design matrix) that are highly correlated; see e.g.,~\cite{Meyer99} for the exact form of the generators of some of the examples discussed. In fact, for isotonic and convex regression in $d=1$ there are exactly $n+1$ and $n+2$ generators, respectively. However, for their higher dimensional analogues (i.e., $d>1$) it is not clear what the generators are. We think this is an open problem. More generally, one can ask how does one construct the design matrix (or the generators) corresponding to any closed convex polyhedral cone expressed in terms of linear inequalities, e.g.,  $\C := \{\theta \in \R^n: A \theta \le 0\}$ where $A$ is an $m \times n$ matrix ($m$ being the number of linear constraints) and the `$\le$' is interpreted coordinate-wise. 

As the number of generators (or the columns of the design matrix) is increasing with $n$, we are essentially solving a `high-dimensional' nonnegative least squares problem; see e.g.,~\cite{SH13}. A general open question is: Can a theory be developed on the estimation accuracy of the LSE $\hat \theta$ based solely on the properties of the design matrix? It may be noted here that the generators can be highly correlated. $\vspace{0.05in}$

{\bf Boundary behavior of shape-restricted LSEs.} It is well known that the isotonic LSE is {\it inconsistent} at the boundary of the covariate domain, i.e., $\hat f_n(0+)$ does not consistently estimate $f(0+)$ (see e.g.,~\cite{WS91, KL06, BFJPSW11} for detailed discussions on the properties of the LSE for a nonincreasing density near 0). Intuitively, this inconsistency is because there are very few  `constraints' near the boundary (of the covariates). This phenomenon is expected to persist for other shape constrained LSEs, especially in multi-dimensional problems. However not much is known about the boundary behavior of these LSEs. Even in one-dimensional convex regression, as far as we are aware, whether $\hat f_n(0+)$ is a $O_p(1)$ random variable is not known; see~\cite{GS16} for some results on $\hat f_n(0+)$ and its derivative (also see~\cite{B07}). This has motivated the study of bounded/penalized shape-restricted LSEs; see e.g.,~\cite{CLS15, WMO15, KPS17}. $\vspace{0.05in}$

{\bf Shape-restricted single index models.} 
Although several smoothing based methods have been proposed and investigated in single index models (see e.g.,~\cite{VANC, groeneboom2016current} and the references therein) to obtain $\sqrt{n}$-consistent and {\it efficient} estimators of $\beta^*$ (see~\cite{Newey90} for a brief overview of the notion of semiparametric efficiency), not much is known for just shape-restricted single index models. 
Durot et al.~\cite{2016arXiv161006026B} studied the  LSE in a monotone single index model (see~\eqref{eq:MonoSIM}) and showed the $n^{1/3}$-consistency of the LSEs of $m$ and $\beta^*$; also see~\cite{VANC} and~\cite{groeneboom2016current}. However many open questions remain. The limiting distribution of the LSE $\hat \beta$ of $\beta^*$ is unknown; in fact, it is not known whether $\hat \beta$ is $\sqrt{n}$-consistent.


In a convex single index model (i.e., $m$ is convex),~\cite{KPS17} shows that the Lipschitz constrained convex LSE (where we minimize the least squares criterion over the class of all $L$-Lipschitz convex functions, for $L$ fixed) yields a semiparametrically efficient estimator of the index parameter $\beta^*$. However, the behavior of the convex LSE (without the Lipschitz assumption) is unknown. 

As mentioned above, the computation of the shape-restricted LSEs is non-trivial. Usually an alternating minimization scheme is used to compute the LSEs. However, no convergence  guarantees (to a local optimum) exist for such an alternating minimization procedure.

\section*{Acknowledgements}
We would like to thank Moulinath Banerjee, Sabyasachi Chatterjee and Probal Chaudhuri for helpful discussions. We also thank the Associate Editor and two anonymous referees for many insightful comments and suggestions.

\appendix

\section{Proofs}\label{sec:Proofs}
This section contains proofs of some of the results from the main paper. 
\subsection{Proof of Theorem~\ref{thm:AdapIsoReg}}\label{sec:AdapIsoReg}
To prove the adaptive risk bound \eqref{eq:IsoAdapBd} for
isotonic regression we shall apply inequality \eqref{eq:AdapRB} with $Z = \eps/\sigma$ which reduces the task to
proving that  
\begin{equation}\label{gaga}
  \E \|\Pi_{T_{\I}(\theta)}(Z)\|^2 \leq 4 k(\theta) \log
  \frac{en}{k(\theta)} \quad \mbox{for every $\theta \in \I$}.
\end{equation}
To prove this, we
obviously need to characterize the tangent cone $T_{\I}(\theta)$. Fix
$\theta \in \I$ and let $k = k(\theta)$. This means that $\theta$ has
$k$ constant pieces (and $k-1$ jumps). Let the lengths of the $k$
constant pieces be $n_1, \dots, n_k$ (so that $n_j \geq 1$ and $\sum_{j=1}^k
n_j = n$). It is now an easy exercise to verify (via \eqref{tobel}) that the tangent cone 
$T_{\I}(\theta)$ is given by 
\begin{equation}\label{tt}
  T_{\I}(\theta) = \I_{n_1} \times \dots \times \I_{n_k}
\end{equation}
where $\times$ denotes Cartesian product and, for an integer $\ell
\geq 1$,  
\begin{equation*}
  \I_{\ell} := \left\{(x_1, \dots, x_{\ell}) \in \R^l : x_1 \leq \dots \leq
    x_{\ell} \right\}. 
\end{equation*}
The equality \eqref{tt} immediately implies that
\begin{equation}\label{tem1}
 \E \|\Pi_{T_{\I}(\theta)}(Z)\|^2 = \sum_{j=1}^k \E \|\Pi_{{\I_{n_j}}}(Z^{(j)})\|^2
\end{equation}
where $Z^{(j)}$ denotes the part of the vector $Z$ corresponding to
the $j^{th}$ constant piece of $\theta$. The key now is to prove that 
\begin{equation}\label{kmm}
  \E \|\Pi_{\I_n}(Z)\|^2 \leq 4 \left( 1 + \frac{1}{2} + \dots + \frac{1}{n}\right) 
  \quad \mbox{for every $n \geq 1$}. 
\end{equation}
This, along with the standard fact that $\sum_{j=1}^n (1/j) \leq \log
(en)$, implies that
\begin{equation*}
  \E \|\Pi_{{\I_{n_j}}}(Z^{(j)})\|^2 \leq 4 \log (e n_j).
\end{equation*}
Thus, we obtain, via \eqref{tem1}, 
\begin{equation*}
  \E \|\Pi_{T_{\I}(\theta)}(Z)\|^2 \leq \sum_{j=1}^k 4 \log (e n_j) \leq 4 k \log \frac{en}{k}
\end{equation*}
where the last inequality follows from the concavity of the logarithm function. This proves \eqref{gaga}
and consequently \eqref{eq:IsoAdapBd}. 
To prove~\eqref{kmm} let 
\begin{equation*}
  U = (U_1, \dots, U_n) := \Pi_{\I_n}(Z). 
\end{equation*}
The representation \eqref{eq:IsoCharac} gives 
\begin{equation*}
  U_j = \min_{v \geq j} \max_{u \leq j} \bar{Z}_{uv} \quad \mbox{for every $1
    \leq j \leq n$}
\end{equation*}
where $\bar{Z}_{uv}$ represents the mean of $Z_u, \dots, Z_v$. It is
clear from here that 
\begin{equation*}
  U_j \leq \max_{u \leq j} \bar{Z}_{un}
\end{equation*}
so that 
\begin{equation*}
  \E (U_j)_+^2 \leq \E \max_{u \leq j} (\bar{Z}_{un})_+^2 . 
\end{equation*}
Here $x_+ := \max(x, 0)$ and $a_+^2 := (a_+)^2$. The key now is to
realize that the sequence of random variables $\bar{Z}_{1n}, \dots,
\bar{Z}_{jn}$ is a martingale (easy to verify) so that by Doob's
submartingale maximal inequality, we deduce that 
\begin{equation*}
  \E (U_j)_+^2 \leq 4 \E(\bar{Z}_{jn})_+^2. 
\end{equation*}
By an analogous argument, we can also deduce that
\begin{equation*}
  \E(U_j)_-^2 \leq 4 \E (\bar{Z}_{1j})_-^2
\end{equation*}
where $x_- := \max(0, -x)$. Putting together, we deduce 
\begin{equation*}
  \E \|\Pi_{\I_n}(Z)\|^2 = \sum_{j=1}^n \E U_j^2 \leq 4 \sum_{j=1}^n
  \left(\E (\bar{Z}_{jn})_+^2 + \E (\bar{Z}_{1j})_-^2 \right). 
\end{equation*}
Now because $Z_1, \dots, Z_n$ are i.i.d.~$(Z_1, \dots, Z_n)$ has the
same distribution as $(Z_n, \dots, Z_1)$ so that 
\begin{equation*}
  \E \|\Pi_{\I_n}(Z)\|^2 \leq 4 \sum_{j=1}^n
  \left(\E (\bar{Z}_{1,n-j+1})_+^2 + \E (\bar{Z}_{n-j+1,n})_-^2  \right) = 4 \sum_{j=1}^n
  \left(\E (\bar{Z}_{1j})_+^2 + \E (\bar{Z}_{jn})_-^2  \right). 
\end{equation*}
Adding the above two inequalities, we obtain
\begin{equation*}
  2 \E \|\Pi_{\I_n}(Z)\|^2 \leq 4 \sum_{j=1}^n \left(\E
    (\bar{Z}_{jn})^2 + \E (\bar{Z}_{1j})^2 \right) = 4 \sum_{j=1}^n
  \left(\frac{1}{n-j+1} + \frac{1}{j} \right) = 8 \sum_{j=1}^n \frac{1}{j}. 
\end{equation*}
This proves \eqref{kmm} and completes the proof of~\eqref{eq:IsoAdapBd}. \qed

\subsection{Proof of inequality~\eqref{eq:IsoWC}}\label{sec:IsoReg-WC}
The initial steps of this proof follow the same steps as in the
beginning of the proof of Theorem \ref{thm:L_p} (the proof of Theorem \ref{thm:L_p} is provided in Subsection \ref{sec:R_p-Risk}). Specifically, we use
inequality \eqref{sho} for $p = 2$ which says that 
\begin{equation*}
    \E_{\theta^*} \left(\hat{\theta}_j - \theta^*_j \right)_+^2 \leq
  \E_{\theta^*} \left(\left(\bar{\theta^*}_{j, j+m} -
    \theta^*_j \right) + \left(\max_{u \leq j} 
  \bar{\eps}_{u, j+m} \right)_+ \right)^2. 
\end{equation*}
for every $1 \leq j \leq n$ and $0 \le m \leq n-j$. The elementary
inequality $\E (c + X)^2 \le (c + \sqrt{\E X^2})^2$ for $c \geq 0$
applied to $c = \bar{\theta^*}_{j, j+m} - \theta^*_j \geq 0$ and $X := \left(\max_{u \leq j} 
  \bar{\eps}_{u, j+m} \right)_+$ now gives 
\begin{equation*}
   \E_{\theta^*} \left(\hat{\theta}_j - \theta^*_j \right)_+^2 \leq
   \left(\bar{\theta^*}_{j, j+m} - \theta^*_j + \sqrt{\E \max_{u \leq
         j} \left(\bar{\eps}_{u, j+m} \right)^2_+} \right)^2. 
\end{equation*}
Doob's maximal submartingale inequality applied to the expectation
above results in 
\begin{equation*}
  \E \max_{u \leq j} \left(\bar{\eps}_{u, j+m} \right)^2_+ \leq 4
  \E \left(\bar{\eps}_{u, j+m} \right)^2_+  \leq 4
  \left(\bar{\eps}_{u, j+m} \right)^2 = \frac{4 \sigma^2}{m + 1}. 
\end{equation*}
We have thus proved that 
\begin{equation}\label{ann1}
   \E_{\theta^*} \left(\hat{\theta}_j - \theta^*_j \right)_+^2 \leq
   \min_{0 \leq m \leq n-j} \left(\bar{\theta^*}_{j, j+m} - \theta^*_j
     +\frac{2 \sigma}{\sqrt{m+1}}\right)^2. 
\end{equation}
Similarly, we can prove that
\begin{equation}\label{ann2}
   \E_{\theta^*} \left(\hat{\theta}_j - \theta^*_j \right)_-^2 \leq
   \min_{0 \leq m \leq j-1} \left(\theta^*_j - \bar{\theta^*}_{j-m, j}
     +\frac{2 \sigma}{\sqrt{m+1}}\right)^2. 
\end{equation}
For each $1 \leq j \leq n$, let us define 
\begin{equation*}
  m_1(j) := \max \left\{0 \leq m \leq n-j: \bar{\theta^*}_{j, j+m} - \theta^*_j \leq
  \frac{2 \sigma}{\sqrt{m+1}}\right\}
\end{equation*}
and
\begin{equation*}
  m_2(j) := \max \left\{0 \leq m \leq j - 1: \theta^*_j - \bar{\theta^*}_{j-m, j}
    \leq \frac{2\sigma}{\sqrt{m+1}} \right\}. 
\end{equation*}
Note that these choices of $m_1(j)$ and $m_2(j)$ are different from
those made in the proof of Theorem~\ref{thm:L_p}. From \eqref{ann1}
and \eqref{ann2},  we obtain  
\begin{equation*}
  \E_{\theta^*} \left|\hat{\theta}_j - \theta^*_j \right|^2 \leq 16 \sigma^2 \left(
    \frac{1}{m_1(j) + 1} + \frac{1}{m_2(j) + 1} \right)
\end{equation*}
so that the risk of $\hat{\theta}$ is bounded by 
\begin{equation}\label{dag}
 R(\hat{\theta}, \theta^*) \leq \frac{16\sigma^2}{n}
  \left(\sum_{j=1}^n \frac{1}{m_1(j)+1} + \sum_{j=1}^n \frac{1}{m_2(j)
    + 1} \right) . 
\end{equation}
We shall bound $\sum_j (m_1(j) + 1)^{-1}$ below. The bound for the
term involving $m_2(j)$ will be similar. 

For each $m \geq 0$, let 
\begin{equation*}
  \rho(m) := \sum_{j=1}^n I\left\{m_1(j) = m \right\} ~~~ \text{   and
  } ~~~ l(m) := \sum_{j=1}^n I \left\{m_1(j) < m \right\}. 
\end{equation*}
Clearly $\rho(m) = l(m+1) - l(m)$ and hence
{\small \begin{equation}\label{lmm}
  \sum_{j=1}^n \frac{1}{m_1(j) + 1} = \sum_{m \geq 0}
  \frac{\rho(m)}{m+1} = \sum_{m \geq 0} \frac{l(m+1) - l(m)}{m+1} = \sum_{m \geq 0}
  \frac{l(m+1)}{(m+1)(m+2)}
\end{equation}}
because $l(0) = 0$. For $0 \leq m \leq n-1$, we can write
\begin{equation*}
  l(m+1) = \sum_{j=1}^n I\{m_1(j) < m+1 \}  \leq \sum_{j=1}^{n-m-1} I\{m_1(j) <
  m+1 \} + m + 1. 
\end{equation*}
When $1 \leq j \leq n-m-1$ and $m_1(j) < m+1$, the definition of
$m_1(j)$  implies that 
\begin{equation*}
  \bar{\theta^*}_{j, j+m+1} - \theta^*_j > \frac{2 \sigma}{\sqrt{m+2}} 
\end{equation*}
so that 
\begin{equation*}
  I\left\{m_1(j) < m+1 \right\} < \sqrt{m+2} ~\frac{\bar{\theta^*}_{j, j+m+1} -
    \theta^*_j}{2 \sigma} \qt{for $1 \leq j \leq n - m- 1$}. 
\end{equation*}
and
\begin{equation*}
  l(m+1) \leq m+1 + \frac{\sqrt{m+2}}{2\sigma} \sum_{j=1}^{n-m-1}
  \left(\bar{\theta^*}_{j, j+m+1} - \theta^*_j \right) \qt{for $0 \leq m \leq
    n-1$}. 
\end{equation*}
We now use Lemma \ref{cuz} to bound the right hand side above in terms of
$V := V(\theta^*)$. Indeed, Lemma~\ref{cuz} (applied with $l = m+2$) gives  
\begin{equation*}
  l(m+1) \leq m+1 + \frac{(m+1) \sqrt{m+2}}{2\sigma}V. 
\end{equation*}
Also, the trivial upper bound $l(m+1) \leq n$ always holds. We thus
obtain 
\begin{equation*}
  l(m+1) \leq \min \left(n, m+1 + \frac{(m+1) \sqrt{m+2}}{2\sigma}V
  \right) \qt{for all $m \geq 0$}. 
\end{equation*}
This inequality, together with \eqref{lmm}, gives 
\begin{equation*}
\sum_{j=1}^n \frac{1}{m_1(j) + 1} \leq \sum_{m \geq 0}
  \frac{1}{(m+1)(m+2)} \min \left\{n, m+1+\frac{(m+1) \sqrt{m+2}}{2
      \sigma} V\right\}.  
\end{equation*}
Now let 
\begin{equation*}
  s := \min \left(\left(\frac{\sigma n}{V}\right)^{2/3}, n \right)
\end{equation*}
and write ($C$ below stands for a positive constant whose value may
change from appearance to appearance; it does not depend on $n$, $V$
or $\sigma$)
\begin{align*}
\sum_{j=1}^n \frac{1}{m_1(j) + 1} &\leq \sum_{m \geq 0}
  \frac{1}{(m+1)(m+2)} \min \left\{n, m+1+\frac{(m+1) \sqrt{m+2}}{2
      \sigma} V\right\} \\
&\leq \sum_{0 \leq m \leq s} \frac{(m+1) + (m+1) \sqrt{m+2}(V/(2
  \sigma))}{(m+1)(m+2)} + n\sum_{m > s} \frac{1}{(m+1)(m+2)} \\
&\leq \sum_{m \leq n} \frac{1}{m+2} + \sum_{m \leq (\sigma n/V)^{2/3}}
  \frac{V}{2\sigma \sqrt{m+2}} + n \sum_{m > s}
  \frac{1}{(m+1)(m+2)} \\
&\leq C \log(en) + C \frac{V}{\sigma} \left(\frac{\sigma n}{V}
  \right)^{1/3} + \frac{C n}{s} \\
&= C \log(en) + C \frac{V^{2/3} n^{1/3}}{\sigma^{2/3}} + C n
  \max\left(\left(\frac{V}{\sigma n} \right)^{2/3} , \frac{1}{n}
  \right) \\
&\leq C \log(en) + C \frac{V^{2/3} n^{1/3}}{\sigma^{2/3}}. 
\end{align*}
One can analogously argue that 
\begin{equation*}
  \sum_{j=1}^n \frac{1}{m_2(j) + 1} \leq C \log(en) + C \frac{V^{2/3} n^{1/3}}{\sigma^{2/3}}. 
\end{equation*}
The above pair of inequalities, when combined with \eqref{dag},
complete the proof of inequality \eqref{eq:IsoWC}.

The following auxiliary lemma was used in the proof of inequality
\eqref{eq:IsoWC}. 
\begin{lemma}\label{cuz}
  For every nondecreasing sequence $a_1, \dots, a_n$ and $l \geq 1$,
  we have
  \begin{equation}\label{cuz.eq}
    \sum_{i=1}^{n-l+1} \left(\bar{a}_{i, i+l-1} - a_i \right) \leq
    \frac{l-1}{2} \left(a_n - a_1 \right) 
  \end{equation}
  where $\bar{a}_{i,j}$ for $i \leq j$ is the mean of $a_i, \dots,
  a_j$. 
\end{lemma}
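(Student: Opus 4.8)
The plan is to reduce \eqref{cuz.eq} to a statement about the nonnegative consecutive differences $d_m := a_{m+1} - a_m \ge 0$, $m = 1, \dots, n-1$, and then run a double-counting argument. (The cases $l = 1$ and $l > n$ are trivial, both sides being $0$, so I may assume $2 \le l \le n$.) First I would rewrite each block deviation using $a_{i+k} - a_i = \sum_{m=i}^{i+k-1} d_m$:
\[
\bar a_{i,i+l-1} - a_i = \frac{1}{l}\sum_{k=1}^{l-1}(a_{i+k} - a_i) = \frac{1}{l}\sum_{k=1}^{l-1}\sum_{m=i}^{i+k-1} d_m .
\]
Summing over $i$ from $1$ to $n-l+1$ and interchanging the order of summation expresses the left-hand side of \eqref{cuz.eq} as $\tfrac1l\sum_{m=1}^{n-1} c_m d_m$, where $c_m$ is the number of pairs $(i,k)$ with $1 \le i \le n-l+1$, $1 \le k \le l-1$, and $i \le m \le i+k-1$.

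Next I would bound $c_m$ uniformly. Fixing $m$, the constraint $i \le m \le i+k-1$ is equivalent to $i \le m$ and $k \ge m-i+1$; hence for each admissible $i$ (namely $\max(1,m-l+2) \le i \le \min(m,n-l+1)$) the number of admissible $k$ equals $l-1-m+i$, so $c_m = \sum_i (l-1-m+i)$ over that range of $i$. As $i$ runs over consecutive integers, the summands $l-1-m+i$ form a block of consecutive integers contained in $\{1,2,\dots,l-1\}$ (one checks that the smallest is $\max(l-m,1)\ge 1$ and the largest is $\min(l-1,n-m)\le l-1$). Therefore $c_m \le 1 + 2 + \cdots + (l-1) = l(l-1)/2$ for every $m$. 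Plugging this in and using $d_m \ge 0$ gives
\[
\sum_{i=1}^{n-l+1}(\bar a_{i,i+l-1} - a_i) = \frac{1}{l}\sum_{m=1}^{n-1} c_m d_m \le \frac{l-1}{2}\sum_{m=1}^{n-1} d_m = \frac{l-1}{2}(a_n - a_1),
\]
which is exactly \eqref{cuz.eq}.

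I do not expect a genuine obstacle: once the left-hand side is rewritten in terms of the $d_m$, the whole argument collapses to the elementary fact that $c_m$ is a sum of at most $l-1$ distinct positive integers, each at most $l-1$. The only place needing a little care is the bookkeeping for the truncated range of $i$ when $m$ is near $1$ or near $n$, but there the range of $i$ only shrinks relative to the ``bulk'' range $\{m-l+2,\dots,m\}$, so the bound $c_m \le l(l-1)/2$ — attained with equality in the bulk — persists; this also shows \eqref{cuz.eq} is essentially tight.
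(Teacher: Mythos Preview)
Your proof is correct and follows essentially the same approach as the paper: both rewrite the left-hand side as a nonnegative linear combination of the consecutive differences $d_m = a_{m+1}-a_m$ and then show each coefficient is at most $(l-1)/2$, the paper via the identity $\bar a_{i,i+l-1}-a_i = \tfrac{1}{l}\sum_{j=1}^{l-1}(l-j)\,d_{i+j-1}$ and you via the equivalent double-counting of pairs $(i,k)$.
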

\begin{proof}
  Write $b_i = a_{i} - a_{i-1}$ for $i \geq 2$ and $b_1 = a_1$. It is
  elementary to check that 
  \begin{equation*}
    \bar{a}_{i, i+l-1} = \frac{1}{l} \sum_{j=1}^{l-1} (l-j) b_{i+j}
    \qt{for $i = 1, \dots, n-l+1$}. 
  \end{equation*}
  As a result, the right hand side of~\eqref{cuz.eq} can be easily
  seen to be a linear combination of $b_2, \dots, b_n$ which are all
  nonnegative because $a_1, \dots, a_n$ is nondecreasing. The maximum
  value of the coefficient of any $b_j$ is $(l-1)/2$ which completes
  the proof. 
\end{proof}

\subsection{Proof of~\eqref{eq:IsoAdapBd2}}\label{Bellec-Adap} 
It is now assumed that $\eps_1, \dots,
\eps_n$ are i.i.d.~$N(0, \sigma^2)$. Let $Z_i
:= \eps_i/\sigma, i = 1, \dots, n$ so that $Z_1, \dots, Z_n$ are i.i.d.~$N(0, 1)$. We will use the same ideas as in the proof of Theorem~\ref{thm:AdapIsoReg}. However, when $Z \sim N_n(0, I_n)$,~\eqref{kmm} holds with equality and without the multiplicative factor 4, for every $n \geq 1$. This can be proved via symmetry
arguments based on the theory of finite reflection groups. A sketch of this argument can be found in \cite[Subsection D.4]{LivEdge}. This
observation therefore completes the proof of \eqref{eq:IsoAdapBd2}. 

\subsection{Proof Sketch of~\eqref{rlb}}  \label{prlb}
To see why \eqref{rlb} is true, first consider the case $k = 1$ where we can take $\theta^*$ to be the zero vector without loss of generality. We then have (by the formula \eqref{eq:IsoCharac}) 
\begin{equation*}
  R^{(p)} (\hat{\theta}, \theta^*) \geq \frac{1}{n} \E \left|\hat{\theta}_n \right|^p = \frac{1}{n} \E \left|\max_{u \leq n} \frac{\sum_{i = u}^n y_i}{n - u + 1} \right|^p \geq \frac{1}{n} \E \left(y_n \right)_+^p = \sigma^p \frac{C_p}{n}  
\end{equation*}
where we have used the fact that when $\theta^* = 0$, the random variable $y_n$ is normal with mean  zero and variance $\sigma^2$. This proves \eqref{rlb} for $k = 1$. For general $k$, fix $\theta^* \in \I_k$ and let $n_1, \dots, n_k$ denote the lengths of the $k$ constant blocks of $\theta^*$. Let $s_i := n_1 + \dots n_i$ for $1 \leq i \leq k$. Now if the $k-1$ jumps of $\theta^*$ are all very large in magnitude, then with high probability, 
\begin{equation*}
  \hat{\theta}_{s_i} = \max_{u \leq s_i} \min_{v \geq s_i} \frac{\sum_{l=u}^v y_l}{v - u + 1} \geq \max_{u \leq s_i} \frac{\sum_{l=u}^{s_i} y_l}{s_i - u + 1} \geq y_{s_i}. 
\end{equation*}
This can be rigorized to prove that  
\begin{equation*}
  R^{(p)}(\hat{\theta}, \theta^*) \geq \frac{1}{n}\sum_{i=1}^k \left(y_{s_i} - \theta^*_{s_i} \right)_+^p \geq C_p \sigma^p  \left(\frac{k}{n} \right) 
\end{equation*}
which yields \eqref{rlb}.

\subsection{Proof of Theorem~\ref{thm:L_p}}\label{sec:R_p-Risk}
We only need to prove the first inequality in \eqref{pab.eq}. The
second inequality is a consequence of Jensen's inequality (note that $x \mapsto
x^{(2-p)_+/2}$ is concave on $(0, \infty)$). 

We use the representation \eqref{eq:IsoCharac} of
the isotonic LSE. Fix $1 \leq j \leq n$ and let $0 \leq m \leq
n-j$. By \eqref{eq:IsoCharac}, we have 
\begin{equation*}
  \hat{\theta}_j = \min_{v \geq j} \max_{u \leq j} \bar{Y}_{uv}  \leq
  \max_{u \leq j} \bar{Y}_{u, j+m} = \max_{u \leq j}
  \left(\bar{\theta^*}_{u, j+m} + \bar{\eps}_{u, j+m} \right). 
\end{equation*}
Because $\theta^* \in \I$, we have $\bar{\theta^*}_{u, j+m} \leq
\bar{\theta^*}_{j, j+m}$ for all $u \leq j$. We therefore have
\begin{equation*}
  \hat{\theta}_j - \theta^*_j \leq \left(\bar{\theta^*}_{j, j+m} -
    \theta^*_j \right) + \max_{u \leq j} 
  \bar{\eps}_{u, j+m}. 
\end{equation*}
Taking positive parts on both sides and then raising to power $p$ and
taking expectations on both sides, we derive 
\begin{equation}\label{sho}
  \E_{\theta^*} \left(\hat{\theta}_j - \theta^*_j \right)_+^p \leq
  \E_{\theta^*} \left(\left(\bar{\theta^*}_{j, j+m} -
    \theta^*_j \right) + \max_{u \leq j} 
  \bar{\eps}_{u, j+m} \right)_+^p. 
\end{equation}
Let us now introduce some notation. Let $k = k(\theta^*)$ and let
$n_1, \dots, n_k$ denote the 
lengths of the $k$ constant blocks of $\theta^*$. Let $s_0 := 0$ and
let $s_i := n_1 + \dots + n_i$ for $1 \leq i \leq k$. For each $j = 1,
\dots, n$, let us define two integers $m_1(j)$ and $m_2(j)$ in the
following way: $m_1(j) = s_i - j$ and $m_2(j) = j - 1 - s_{i-1}$ when
$s_{i-1} + 1 \leq j \leq s_i$. The key is to realize that
$\bar{\theta^*}_{j, j+m_1(j)} = \theta^*_j$ for every $j$. As a
result, inequality \eqref{sho} with $m = m_1(j)$ gives
\begin{equation}\label{bsp}
  \E_{\theta^*} \left(\hat{\theta}_j - \theta^*_j \right)_+^p \leq
  \E \left(\max_{u \leq j} 
  \bar{\eps}_{u, j+m_1(j)} \right)_+^p = \sigma^p \E \left(\max_{u \leq j} 
  \bar{Z}_{u, j+m_1(j)} \right)_+^p 
\end{equation}
where, as before, $Z_j := \eps_j/\sigma$ for $j = 1, \dots,
n$. The fact that $\bar{Z}_{1,j + m_1(j)}, \dots, \bar{Z}_{j, j +
  m_1(j)}$ is a martingale allows us to deduce, via Doob's $L^p$
maximal inequality for nonnegative submartingales, that 
\begin{eqnarray}
 \qquad \E \left(\max_{u \leq j}  \bar{Z}_{u, j+m_1(j)} \right)_+^p & \leq & 
  \left(\frac{p}{p-1} \right)^p \E \left(\bar{Z}_{j, j+m_1(j)}
  \right)_+^p  \label{std} \\
  & = &\left(\frac{p}{p-1} \right)^p \E \left(\eta
  \right)^p_+ \left(\frac{1}{m_1(j) + 1} \right)^{p/2} \nonumber
\end{eqnarray}
where $\eta \sim N(0, 1)$ is a standard normal random variable. This
inequality requires $p > 1$ so we assume now that $p > 1$. The
argument for controlling the left hand side above for $p = 1$ will be
given subsequently.  

We have therefore proved that, for $p > 1$, 
\begin{equation*}
   \E_{\theta^*} \left(\hat{\theta}_j - \theta^*_j \right)_+^p \leq
   \sigma^p \left(\frac{p}{p-1} \right)^p \E \left(\eta
  \right)^p_+ \left(\frac{1}{m_1(j) + 1} \right)^{p/2}
\end{equation*}
for every $1 \leq j \leq n$. A similar argument gives
\begin{equation*}
   \E_{\theta^*} \left(\hat{\theta}_j - \theta^*_j \right)_-^p \leq
   \sigma^p \left(\frac{p}{p-1} \right)^p \E \left(\eta
  \right)^p_- \left(\frac{1}{m_2(j) + 1} \right)^{p/2}. 
\end{equation*}
Putting the above two inequalities together, we obtain
\begin{equation*}
   \E_{\theta^*} \left|\hat{\theta}_j - \theta^*_j \right|^p \leq
  \frac{ \sigma^p}{2} \left(\frac{p}{p-1} \right)^p \E |\eta|^p  \left\{\left( \frac{1}{m_1(j) + 1} \right)^{p/2}
    +\left(\frac{1}{m_2(j) + 1} \right)^{p/2}  \right\}
\end{equation*}
for every $j = 1, \dots, n$. Note now that 
\begin{equation*}
  \sum_{j=1}^n \left(\frac{1}{m_1(j) + 1} \right)^{p/2}  =
  \sum_{i=1}^k \sum_{j=s_{i-1} + 1}^{s_i} \left(\frac{1}{s_i - j+1}
  \right)^{p/2} = \sum_{i=1}^k \sum_{j=1}^{n_i} \left(\frac{1}{j}
  \right)^{p/2} 
\end{equation*}
and the same bound holds for $\sum_j (m_2(j) + 1)^{-p/2}$ as
well. This gives 
\begin{equation}\label{sha}
 R^{(p)}(\hat{\theta}, \theta^*) = \frac{1}{n} \E_{\theta^*} \sum_{i=1}^n \left|\hat{\theta}_j -
    \theta^*_j \right|^p \leq  \frac{\sigma^p}{n} \left(\frac{p}{p-1} \right)^p
  \E |\eta|^p \sum_{i=1}^k \sum_{j=1}^{n_i} \left(\frac{1}{j}
  \right)^{p/2}. 
\end{equation}
We now consider the two cases $1 < p < 2$ and $2 < p < \infty$
separately. The simple bound 
\begin{equation*}
  \sum_{j=1}^n j^{-p/2} \leq \frac{2}{2-p} n^{1-(p/2)} \quad \mbox{for $1 \leq
    p < 2$} 
\end{equation*}
gives 
\begin{equation*}
   R^{(p)}(\hat{\theta}, \theta^*) \leq \frac{2}{2-p} \frac{\sigma^p}{n} \left(\frac{p}{p-1}
   \right)^p \E |\eta|^p \sum_{i=1}^k n_i^{1 - (p/2)}
\end{equation*}
which proves \eqref{pab.eq} for $1 < p < 2$. 

For $p > 2$, we simply use $\sum_{i=1}^k \sum_{j=1}^{n_i} \left(\frac{1}{j} \right)^{p/2} \leq k
  \sum_{j=1}^{\infty} j^{-p/2}$
in \eqref{sha}. This completes the proof of \eqref{pab.eq} for the case
when $p > 1, p \neq 2$. 

For $p = 1$, we shall bound the expectation in the right hand side of
\eqref{bsp} in the following way.  Let $\tau^2 := 1/(m_1(j) + 1)$ be
the variance of $\bar{Z}_{j, j+m_1(j)}$. 
\begin{equation*}
  \E \left( \max_{u \leq j} \bar{Z}_{u, j+m_1(j)} \right)_+ \leq \tau
  + \int_{\tau}^{\infty} \p \left\{ \max_{u \leq j} \left(\bar{Z}_{u,
        j+m_1(j)} \right)_+ 
    \geq t \right\} dt
\end{equation*}
Because $\bar{Z}_{u, j+m_1(j)}, u = 1, \dots, j$ is a martingale, the
sequence $(\bar{Z}_{u, j + m_1(j)})_+$ is a nonnegative submartingale
and hence Doob's maximal inequality gives
\begin{equation*}
  \p \left\{ \max_{u \leq j} \left(\bar{Z}_{u,
        j+m_1(j)} \right)_+   \geq t \right\}  \leq \frac{1}{t} \E \left[ \left(\bar{Z}_{u,
        j+m_1(j)} \right)_+   \left\{ \left(\bar{Z}_{u,
        j+m_1(j)} \right)_+   \geq t \right\} \right]
\end{equation*}
so that 
\begin{align*}
  \E \left( \max_{u \leq j} \bar{Z}_{u, j+m_1(j)} \right)_+ &\leq \tau
  + \E \left(\bar{Z}_{u,
        j+m_1(j)} \right)_+ \int_{\tau}^{\infty} \left\{ \left(\bar{Z}_{u,
        j+m_1(j)} \right)_+   \geq t \right\} \frac{dt}{t} \\
&= \tau
  +  \tau \E \left[ \eta_+  \int_{1}^{\infty} \left\{ \eta_+  \geq t
  \right\} \frac{dt}{t}  \right] \leq \tau + \tau \E \eta_+^2
  \int_1^{\infty} \frac{dt}{t^2} \\
  & \leq \frac{3\tau}{2} =
  \frac{3}{2(m_1(j) + 1)}
\end{align*}
where, as before, $\eta$ is a standard normal random variable. Using
the above inequality in place of \eqref{std} proves \eqref{pab.eq} for $p
= 1$ thereby completing the proof of Theorem \ref{thm:L_p}. \qed

\subsection{Proof of Theorem \ref{pro}} \label{ppro} 
The proof of Theorem \ref{pro} basically follows from the same ideas as in the proof of Theorem \ref{thm:L_p}. Here we only highlight the changes that need to be made to the proof of Theorem \ref{thm:L_p}.  Start with inequality \eqref{sho} and using $(a + b)^p \leq C_p (a^p + b^p)$ (for example, $C_p$ can be taken to be $2^p$), obtain
\begin{equation*}
  \E_{\theta^*} \left(\hat{\theta}_j - \theta^*_j \right)_+^p \leq
C_p \left(\bar{\theta^*}_{j, j+m} -
    \theta^*_j \right)^p + C_p \E_{\theta^*} \left( \max_{u \leq j} 
  \bar{\eps}_{u, j+m} \right)_+^p.   
\end{equation*}
Now we fix an interval partition $\pi = (n_1, \dots, n_k)$ with $k(\pi) = k$ and take $s_0 := 0$ and $s_i := n_1 + \dots + n_i$ for $1 \leq i \leq k$. As before we take $m_1(j) = s_i - j$ and $m_2(j) = j - 1 - s_{i-1}$ whenever $s_{i-1} + 1 \leq j \leq s_i$. We apply the above inequality with $m = m_1(j)$. Note then that 
\begin{equation*}
  \bar{\theta^*}_{j, j+m_1(j)} - \theta^*_j  \leq V_{\pi}(\theta^*). 
\end{equation*}
The second term involving the errors is dealt with in the same way as in the proof of Theorem \ref{thm:L_p}. All other details follow just as in the proof of Theorem \ref{thm:L_p}.

\subsection{Proof of Lemma~\ref{lem:AM-Oracle}}\label{sec:AddMdl}
We shall prove Lemma \ref{lem:AM-Oracle} for the case $d = 2$ for simplicity of notation. The generalization to arbitrary $d$ is straightforward. 

Let $n_j$ be the cardinality of $\X_j$ for $j = 1, 2$ and note then that $n = n_1 n_2$. Assumption~\eqref{iden} becomes 
\begin{equation*}
  \frac{1}{n_1} \sum_{i_1 \in \X_1} f_1^*(i_1) = \frac{1}{n_2}
  \sum_{i_2 \in \X_2} f_2^*(i_2) = 0. 
\end{equation*}
The key observation is that for every pair of functions $f_1$ and $f_2$
satisfying 
\begin{equation}\label{kid}
  \frac{1}{n_1} \sum_{i_1 \in \X_1} f_1(i_1) = \frac{1}{n_2}
  \sum_{i_2 \in \X_2} f_2(i_2) = 0,
\end{equation}
and $\mu \in \R$, the following quantity
\begin{equation*}
  \sum_{i_1 \in \X_1} \sum_{i_2 \in \X_2} \big(y_{i_1, i_2} - \mu -
    f_1(i_1) - f_2(i_2) \big)^2 
\end{equation*}
is equal to 
\begin{align*}
  \sum_{i_1 \in \X_1} \sum_{i_2 \in \X_2} & \left(y_{i_1, i_2} -
  \bar{y}_{i_1, \cdot} - 
    \bar{y}_{\cdot, i_2} + \bar{y}_{\cdot, \cdot} \right)^2 +
 n_2 \sum_{i_1 \in \X_1} \left(\bar{y}_{i_1, \cdot} - \bar{y}_{\cdot, \cdot} -
   f_1(i_1) \right)^2 \\
&+ n_1 \sum_{i_2 \in \X_2} \left(\bar{y}_{\cdot, i_2} - \bar{y}_{\cdot, \cdot} -
   f_2(i_2) \right)^2 + n_1 n_2 \left(\bar{y}_{\cdot, \cdot} - \mu
  \right)^2 
\end{align*}
where $\bar{y}_{i_1, \cdot} := \left(\sum_{i_2 \in \X_2} y_{i_1, i_2} \right)/n_2$ and $\bar{y}_{\cdot, i_2} := \left(\sum_{i_1 \in \X_1} y_{i_1, i_2} \right)/n_1$. Also the overall mean is $\bar{y}_{\cdot, \cdot} := \left( \sum_{i_1 \in \X_1} \sum_{i_2 \in \X_2}
  y_{i_1, i_2} \right)/(n_1 n_2)$. This nice decomposition of the least squares criterion implies that  
\begin{equation*}
  \hat{f}_1 = \argmin \left\{\sum_{i_1 \in \X_1} \left(\bar{y}_{i_1, \cdot} - \bar{y}_{\cdot, \cdot} -
   f_1(i_1) \right)^2 : f_1 \in \F_1, \sum_{i_1 \in \X_1}
 f_1(i_1) = 0 \right\} 
\end{equation*}
and
\begin{equation*}
  \hat{f}_2 = \argmin \left\{\sum_{i_2 \in \X_2} \left(\bar{y}_{\cdot, i_2} - \bar{y}_{\cdot, \cdot} -
   f_2(i_2) \right)^2 : f_2 \in \F_2, \sum_{i_2 \in \X_2}
 f_2(i_2) = 0 \right\} 
\end{equation*}
and also that $\hat{\mu} = \bar{y}_{\cdot, \cdot}$. 

The above decomposition of the least squares criterion holds for every
$\mu, f_1$ and $f_2$ satisfying \eqref{kid}. In particular, it holds
when $f_1$ (resp. $f_2$) is replaced by $f_1^*$ (resp. $f_2^*$). It
follows therefore that $\hat{f}_1 = \hat{f}_1^{OR}$ and $\hat{f}_2 =
\hat{f}_2^{OR}$.

\subsection{Proof of Theorem~\ref{thm:PO}}
The proof of the above result uses the following lemma, which we prove in Section~\ref{sec:Proof-PO}.
\begin{lemma}\label{eq:TFunc}
Let $f$ be a continuous real-valued function defined on $\mathbb{R}^d$ ($d \ge 1$). Define the transformation $T_{k,\epsilon}: \mathbb{R}^n \rightarrow \mathbb{R}^n$ for $k \in \{1,2,\ldots, d\}$ and $\epsilon \in \mathbb{R}$ as $$ T_{k,\epsilon}(z_1,\ldots, z_d) := (z_1,\ldots, z_{k-1}, z_k + \epsilon, z_{k+1} - \epsilon, z_{k+2}, \ldots, z_d).$$ Then, $f$ is order preserving with respect to the partial order $\precsim$ if and only if for every $z \in \R^d$, and $k = 1,\ldots, d$, $f(T_{k,\epsilon}(z))$ is nondecreasing in $\epsilon$.
\end{lemma}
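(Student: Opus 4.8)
The plan is to recast the partial order in terms of partial sums and to identify each map $T_{k,\epsilon}$ with movement along an extreme generator of the cone defining $\precsim$. Writing $w := v - u$ and $S_i(w) := \sum_{j=1}^i w_j$, the relation $u \precsim v$ is by definition equivalent to $S_i(w) \ge 0$ for every $i = 1,\ldots, d$. The key algebraic observation I would record first is that $T_{k,\epsilon}(z) = z + \epsilon\, g_k$, where $g_k := e_k - e_{k+1}$ for $1 \le k \le d-1$ and $g_d := e_d$ (adopting the boundary convention that $T_{d,\epsilon}$ simply adds $\epsilon$ to the last coordinate, there being no $(d+1)$-th coordinate to subtract from), and that these $g_k$ are precisely the extreme rays of the cone $\{w : S_i(w) \ge 0 \text{ for all } i\}$. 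Concretely, I would verify the decomposition
\[
w = \sum_{k=1}^d S_k(w)\, g_k,
\]
which follows by matching coordinates, since $w_k = S_k(w) - S_{k-1}(w)$ with $S_0 := 0$. This single identity drives both directions.

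For the forward direction ($f$ order preserving $\Rightarrow$ each $\epsilon \mapsto f(T_{k,\epsilon}(z))$ nondecreasing), I would fix $z$, $k$, and $\epsilon_1 \le \epsilon_2$ and check directly that $T_{k,\epsilon_1}(z) \precsim T_{k,\epsilon_2}(z)$. Their difference is $(\epsilon_2 - \epsilon_1)\, g_k$, whose partial sums $S_i$ vanish for $i \ne k$ (the $+\epsilon$ and $-\epsilon$ cancel for $i \ge k+1$, and nothing changes for $i < k$) and equal $\epsilon_2 - \epsilon_1 \ge 0$ for $i = k$. Hence the two points are $\precsim$-comparable, and order preservation of $f$ yields $f(T_{k,\epsilon_1}(z)) \le f(T_{k,\epsilon_2}(z))$, which is exactly the desired monotonicity in $\epsilon$.

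For the reverse direction, given $u \precsim v$ I would build an explicit finite path from $u$ to $v$ along the generator directions. Set $p_0 := u$ and $p_k := T_{k,\, S_k(w)}(p_{k-1})$ for $k = 1,\ldots, d$; the decomposition above telescopes to $p_d = u + \sum_k S_k(w)\, g_k = u + w = v$. Since each $S_k(w) \ge 0$ and, by hypothesis, $\epsilon \mapsto f(T_{k,\epsilon}(p_{k-1}))$ is nondecreasing, comparing the values at $\epsilon = S_k(w)$ and $\epsilon = 0$ gives $f(p_{k-1}) = f(T_{k,0}(p_{k-1})) \le f(p_k)$. Chaining these inequalities produces $f(u) = f(p_0) \le \cdots \le f(p_d) = f(v)$, establishing order preservation.

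I expect the only genuine obstacle to be the first step, namely correctly identifying the generators $g_k$ and proving the nonnegative decomposition $w = \sum_k S_k(w)\, g_k$; once this is in place both implications are immediate. I would also remark, as a sanity check, that continuity of $f$ is not actually used in this argument (the connecting path has only $d$ steps), so the equivalence in fact holds for arbitrary $f$; continuity enters only later, when this lemma is combined with a first-order analysis to derive the derivative characterization~\eqref{eq:DecDeriv} in Theorem~\ref{thm:PO}.
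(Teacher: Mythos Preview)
Your proof is correct and is essentially the same as the paper's: both directions proceed by the identical mechanism, namely verifying $T_{k,\epsilon_1}(z) \precsim T_{k,\epsilon_2}(z)$ via partial sums for the forward implication, and for the converse building the telescoping path $p_0=u$, $p_k=T_{k,S_k(w)}(p_{k-1})$ with $\epsilon_k=S_k(v-u)\ge 0$ (the paper writes $s_k$ for your $p_k$). Your additional framing in terms of the cone generators $g_k$ and the identity $w=\sum_k S_k(w)g_k$ is a helpful conceptual gloss, and your remark that continuity is unused is correct.
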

Fix $z \in \R^d$. Since $f$ is differentiable and order preserving, by Lemma~\ref{eq:TFunc} this is equivalent to 
\begin{eqnarray*} 
\frac{d}{d \epsilon} f(T_{k,\epsilon}(z)) & \ge &  0, \\
\mbox{i.e.,} \;\;\; f_{(k)}(T_{k,\epsilon}(z)) - f_{(k+1)}(T_{k,\epsilon}(z)) & \ge & 0,
\end{eqnarray*}
for all $k \in \{1,2, \ldots, d-1\}$. Letting $\epsilon$ go to zero yields the desired result. \qed

\subsection{Proof of Lemma~\ref{eq:TFunc}}\label{sec:Proof-PO}
Suppose that $f$ is order preserving, i.e., if $u \precsim v$, where $u,v \in \R^d$, then $f(u) \le f(v)$. Let $z \in \R^d$ and $\epsilon < \epsilon' \in \R$. We want to show that $f(T_{k,\epsilon}(z)) \le f(T_{k,\epsilon'}(z))$, for $k = 1,\ldots, d$. Fix $k \in \{1,2,\ldots, d\}$ and define $$u := T_{k,\epsilon}(z) \;\;\; \mbox{ and } \;\;\; v := T_{k,\epsilon'}(z).$$ Therefore, $\sum_{i=1}^j u_i = \sum_{i=1}^j v_i$, for $j = 1,\ldots, k-1$, $\sum_{i=1}^k u_i < \sum_{i=1}^k v_i$, and $\sum_{i=1}^j u_i = \sum_{i=1}^j v_i$, for $j = k+1,\ldots, d$. Thus, $u \precsim v$ and we have $f(u) \le f(v)$, i.e., $f(T_{k,\epsilon}(z)) \le f(T_{k,\epsilon'}(z))$.

Now suppose that $f(T_{k,\epsilon} (z))$ is nondecreasing in $\epsilon$, for every $z \in \R^d$. Take $u \equiv (u_1,\ldots, u_d) \precsim (v_1,\ldots, v_d) \equiv v$ in $\R^d$. Let $\epsilon_k := \sum_{i=1}^k (v_i - u_i)$, for $k = 1,\ldots,d$. By definition, $\epsilon_k \ge 0$, for all $k$. Define 
$$ s_k := T_{k,\epsilon_k}(s_{k-1}),$$
for $k = 1,\ldots, d$, where $s_0 \equiv u$. Note that $s_k \in \R^d$ for every $k = 1,\ldots, d$ and $s_d = T_{d,\epsilon_{d}}(s_{d-1}) = v$. Then from the nondecreasing property of $f(T_{d,\epsilon}(\cdot))$ (and using the fact that $\epsilon_k \ge 0$ for all $k$), 
\begin{eqnarray*}
f(u) & \equiv & f(T_{1,0}(s_0)) \le f(T_{1,\epsilon_1}(s_0)) = f(T_{2,0}(s_1)) \le f(T_{2,\epsilon_2}(s_1)) = f(T_{3,0}(s_3)) \\
& &  \le \cdots \le f(T_{d,0}(s_{d-1})) \le f(T_{d,\epsilon_{d}}(s_{d-1})) \equiv f(v),
\end{eqnarray*} yielding the desired result. 
\qed

\subsection{Outline of a proof of~\eqref{eq:LimDist}}\label{sec:Asym-IsoReg} 
In the following we sketch an outline of a proof of~\eqref{eq:LimDist}. Our proof technique directly appeals to the characterization of the isotonic LSE as described at the beginning of this section; see~\cite[Section 3.2.15]{VW96} for an alternative proof technique that uses the {\it switching relationship}, due to Groeneboom~\cite{G85}. Let us further assume that the i.i.d.~errors $\varepsilon_i$'s have a finite moment generating function near 0. This assumption lets us avoid the use of heavy empirical process machinery and, we hope, will make the main technical arguments simple and accessible to a broader audience.

We consider the stochastic process $$\Z_n(h) := n^{2/3} [F_n(t + n^{-1/3}h) - F_n(t) - n^{-1/3} h f(t)], $$ for $h \in [-tn^{1/3} , (1-t)n^{1/3}]$. We regard stochastic processes as random elements in $D(\R)$, the space of right continuous functions on $\R$ with left limits, equipped with the projection $\sigma$-field and the topology of uniform convergence on compacta; see~\cite[Chapters IV and V]{Pollard84}   for background.

Observe that if $u$ is a bounded function and $v$ is affine then $\widetilde{u + v} = \tilde{u} + v$. Using this, $\tilde \Z_n$, the largest convex function sitting below $\Z_n$, has the form $$\tilde \Z_n(h) = n^{2/3} [\tilde F_n(t + n^{-1/3}h) - F_n(t) - n^{-1/3} h f(t)], $$ for $h \in [-tn^{1/3} , (1-t)n^{1/3}]$. By taking the left-derivative of the above process at $h=0$ we get (w.p.~1),
\begin{equation}\label{eq:Delta_n}
\Delta_n =  [\tilde \Z_n]'(0).
\end{equation}
The above relation is crucial, as it relates $\Delta_n$, the quantity of interest, to a functional of the process $\Z_n$. We study the process $\Z_n$ (and show its convergence) and apply a (version of) `continuous' mapping theorem (see e.g.,~\cite[pp.~217-218]{KP90}) to derive the limiting distribution of $\Delta_n$.

Let $\check F_n:[0,1] \to \R$ be the continuous piecewise affine function (with possible knots only at $i/n$, for $i = 1,\ldots, n$) with
\begin{equation*}
\check F_n \Big(\frac{i}{n} \Big) := \frac{1}{n} \sum_{j=1}^i f\Big(\frac{j}{n} \Big), \quad \mbox{for} \; i = 0,\ldots, n,
\end{equation*}
and let $F:[0,1] \to \R$ be defined as
\begin{equation*}
F(x) := \int_0^x f(s) ds.
\end{equation*}
To study the stochastic process $\Z_n$ we decompose $\Z_n$ into the sum of the following three terms:
\begin{eqnarray*}
\Z_{n,1}(h) & := & n^{2/3} [F_n(t + n^{-1/3}h) - \check F_n(t + n^{-1/3}h) - F_n(t) + \check F_n(t)], \\
\Z_{n,2}(h) & := & n^{2/3} [\check F_n(t + n^{-1/3}h) - F(t + n^{-1/3}h) - \check F_n(t) + F(t)], \\
\Z_{n,3}(h) & := & n^{2/3} [F(t + n^{-1/3}h) - F(t) - n^{-1/3} h f(t)],
\end{eqnarray*}
Observe that $F_n - \check F_n$ is just the partial sum process, properly normalized. By the Hungarian embedding theorem (see e.g.,~\cite{KMT76}) we know that the partial sum process is approximated by a Brownian motion process such that
\begin{equation}\label{eq:StrongApp}
F_n(x) - \check F_n(x) = n^{-1/2}  \sigma \B_n(x) + R_n(x),
\end{equation} 
where $\B_n$ is a Brownian motion on [0,1] and $$\sup_x |R_n(x)| = O \Big(\frac{\log n}{n}\Big) \; \mbox{ w.p.~1}.$$ Thus, 
$$\Z_{n,1}(\cdot) = \sigma \W_n(\cdot) + o_p(1),$$ where the process $\W_n$ is defined as $\W_n(h) := n^{1/6} \{\B_n(t + n^{-1/3} h) - \B_n(t)\}, h \in \R$, and $\W_n \sim \W$ with $\W$ being distributed as a two-sided Brownian motion (starting at 0). This shows that the process $\Z_{n,1}$ converges in distribution to $\W$.

To study $\Z_{n,2}$, observe that as $f(\cdot)$ is continuously differentiable in a neighborhood $\N$ around $t$, we have (by a simple interpolation bound) $$\sup_{x \in \N} |\check F_n(x) - F(x)| = O(n^{-1}).$$ Thus, $\Z_{n,2}$ converges to the zero function. By a simple application of Taylor's theorem, we can show that $\Z_{n,3}$ converges, uniformly on compacta, to the function $D(h) := h^2 f'(t)/2$.

Combining the above results, we obtain that $\Z_n$ converges in distribution to the process $\Z(h):= \sigma \W(h) + h^2 f'(t)/2$, i.e., $$\Z_n \stackrel{d}{\to} \Z$$ in the topology of uniform convergence on compacta. Thus, it is reasonable to expect that $$\Delta_n = [\tilde \Z_n]'(0) \stackrel{d}{\to} [\tilde \Z]'(0).$$ However, a rigorous proof of the convergence in distribution of $\Delta_n$ involves a little more than an application of a continuous mapping theorem. The convergence of $\Z_n$ to $\Z$ is only under the metric of uniform convergence on compacta. However, the GCM near the origin might be determined by values of the process far away from the origin; the convergence $\Z_n$ to $\Z$ itself does not imply the convergence of $[\tilde \Z_n]'(0)$ to $[\tilde \Z]'(0)$. We need to show that $\Delta_n$ is determined by values of $\Z_n(h)$ for $h$ in an $O_p(1)$ neighborhood of 0; see e.g.,~\cite[pp.~217-218]{KP90} for such a result with a detailed proof.

It can be shown that $[\tilde \Z]'(0) \stackrel{d}{=} \kappa \mathbb{C}$ (see e.g.,~\cite[Chapter 3.2]{GJ14} and~\cite[Exercise 3.12]{GJ14}) which completes the proof sketch of~\eqref{eq:LimDist}.

\subsection{A sketch of the main steps in the proof of~\eqref{eq:CvxLim}}\label{sec:Cvx-Reg-Asymp}
Let $\hat \Theta_n: [0,1] \to \R$ be the continuous piecewise affine function  (with possible knots only at $i/n$) such that $\hat \Theta_n(i/n) = \hat \Theta_i$, for $i = 1,\ldots, n$, and $\hat \Theta_n(0)=0$. Let $\tilde \Theta_n: [0,1] \to \R$ be defined as $$\tilde \Theta_n(x) := \int_0^x \hat f_n(s) \, ds.$$ Note that $\hat \Theta_n$ and $\tilde \Theta_n$ are asymptotically the same, but it is easier to study $\tilde \Theta_n$. Let $G_n$ denote the empirical distribution function of the design points $\{i/n:i =1,\ldots, n\}$. Further, let us define the stochastic processes $\h_n^{loc}$ and $\tilde \h_n^{loc}$ (on $\R$) as
{\small \begin{eqnarray*}
\h_n^{loc}(h)  :=  n^{4/5} \int_t^{t + n^{-1/5}h} \left[ \hat \Theta_n(v) - \hat \Theta_n(t) - \int_t^v \big\{ f(t) - (u-t) f'(t) \big\} d G_n(u)\right] dv + A_n h + B_n, \\
\tilde \h_n^{loc}(h) :=  n^{4/5} \int_t^{t + n^{-1/5}h} \left[ \tilde \Theta_n(v) - \tilde \Theta_n(t) - \int_t^v \big\{ f(t) - (u-t) f'(t) \big\} du\right] dv + A_n h + B_n, \qquad 
\end{eqnarray*}}
where $$A_n := n^{3/5} \{\hat \Theta_n(t) - F_n(t)\}, \quad \mbox{and} \quad B_n := n^{4/5}\int_0^t  \{  \hat \Theta_n(s) - F_n(s) \}\; ds.$$ The process $\h_n^{loc}$ can be thought of as the `localization' of the left side of~\eqref{eq:CvxLSE-Ch}. The process $\tilde \h_n^{loc}$ is important for the following reason. As $\tilde \Theta_n'(x) = \hat f_n(x)$, for $x \in (0,1)$, by differentiating the process $\tilde \h_n^{loc}$ twice we get $$(\tilde \h_n^{loc})''(h) = n^{2/5} \{\hat f_n(t + n^{-1/5} h) - f(t) - f'(t) n^{-1/5} h\}.$$ Thus, the quantity of interest $\Delta_n$ is related to the process $\tilde \h_n^{loc}$ as $$\Delta_n = (\tilde \h_n^{loc})''(0),$$ and this motivates the study of the process $\tilde \h_n^{loc}$ (cf.~\eqref{eq:Delta_n}). Further, one can show that $\tilde \h_n^{loc}$ and $\h_n^{loc}$ are asymptotically the same; see~\cite[p.~1696]{GJW01-a}.

The following process can be thought of as the localization of the right side of~\eqref{eq:CvxLSE-Ch}:
{\small \begin{eqnarray*}
\Z_n^{loc}(h) := n^{4/5} \int_t^{t + n^{-1/5}h} \left[ F_n(v) - F_n(t) - \int_t^v \big\{ f(t) - (u-t) f'(t) \big\} d G_n(u)\right] dv.
\end{eqnarray*}}
Moreover, as we will show, the process $\Z_n^{loc}$ converges to a limiting distribution and is related to $\h_n^{loc}$ (and thus to $\tilde \h_n^{loc}$) as, for all $h \in \R$,
\begin{eqnarray*}
\h_n^{loc}(h) - \Z_n^{loc}(h) & = & n^{4/5} \int_t^{t + n^{-1/5}h} \{\hat \Theta_n(v) - F_n(v)\} dv + B_n \\
 & = & n^{4/5} \int_0^{t + n^{-1/5}h} \{\hat \Theta_n(v) - F_n(v)\} dv  \ge 0,
\end{eqnarray*}
with equality if $t + n^{-1/5} h$ is a kink point. As the process $\Z_n^{loc}$ involves empirical averages we can use empirical process techniques (or the Hungarian embedding theorem; see~\eqref{eq:StrongApp}) and Taylor's expansion to show that $$\Z_n^{loc}(h) \stackrel{d}{\to} \sqrt{f(t)} \int_0^h \W(s) ds + \frac{1}{24} f''(t) h^4 := \Z(h),\quad h \in [-K,K],$$ as a stochastic process in $D[-K,K]$, for any $K >0$, under the metric of uniform convergence on $[-K,K]$,  where $\W$ is a two-sided Brownian motion; see~\cite[pp.~1694--1696]{GJW01-a}. 

By~\cite[Theorem 6.1]{GJW01-a} there exists an almost surely uniquely defined random continuous process $\h$, called an invelope of the process $\Z$, such that (i) $\h(h) \ge \Z(h)$, for each $h \in \R$; (ii)  $\h$ has a convex second derivative, and, with probability 1, $\h$ is three times differentiable at $h=0$; and (iii) $\int \{\h(h) - \Z(h)\} d \h^{(3)}(h) = 0$  (which signifies that $\h = \Z$ on the set where $\h^{(3)}$ has a jump). Further, it is shown in~\cite[pp.~1689--1692]{GJW01-a} that along with the process $\Z_n^{loc}$, the ``invelope'' $\h_n^{loc}$ converges in such a way that the second and third derivatives of $\h_n^{loc}$ at zero converges in distribution to the corresponding quantities of $\h$; also see~\cite{GJW01-b}. Thus, $\tilde \h_n^{loc}$ converges weakly to $\h$ (as $\tilde \h_n^{loc}$ and $\h_n^{loc}$ are asymptotically equivalent) and $\Delta_n = (\tilde \h_n^{loc})''(0)$ (see~\eqref{eq:CvxLim}) converges weakly to $\h''(0)$.

\bibliographystyle{abbrv}
\bibliography{AG}
\end{document}